\numberwithin{equation}{section}
\numberwithin{figure}{section}
\definecolor{MyDarkBlue}{rgb}{0,0.08,0.50}
\definecolor{BrickRed}{rgb}{0.65,0.08,0}
\numberwithin{equation}{section}
\numberwithin{figure}{section}
\theoremstyle{plain}
\newtheorem{thm}{Theorem}
  \theoremstyle{plain}
  \newtheorem{cor}[thm]{Corollary}
  \theoremstyle{remark}
  \theoremstyle{plain}
  \newtheorem{lem}[thm]{Lemma}
\numberwithin{equation}{section}
\numberwithin{figure}{section}
\numberwithin{thm}{section}
\newcommand{\R}{\mathbb{R}}
\newcommand{\E}{\mathbb{E}}
\newcommand{\p}{\mathbb{P}}
\newcommand{\convN}{\underset{N\rightarrow +\infty}{\longrightarrow}}
\newcommand{\1}{\text{\bf 1}}
\theoremstyle{plain}
\newtheorem{assump}{Assumption}
\date\today
\newcommand{\beqnn}{\begin{eqnarray}}
\newcommand{\eeqn}{\end{eqnarray}}
\newcommand{\beq}{\begin{eqnarray*}}
\newcommand{\eeq}{\end{eqnarray*}}
\newcommand{\clb}{\mathcal{B}}
\newcommand{\cls}{\mathcal{S}}
\newcommand{\clt}{\mathcal{T}}
\newcommand{\clp}{\mathcal{P}}
\newcommand{\clf}{\mathcal{F}}
\newcommand{\N}{\mathbb{N}}
\newcommand{\be}{\begin{equation}}
\newcommand{\ee}{\end{equation}}
\newcommand{\eps}{\epsilon}
\newcommand{\ti}{\tilde}
\newcommand\ackname{Acknowledgements}
  \newenvironment{acknowledgements}{%
      \titlepage
      \null\vfil
      \@beginparpenalty\@lowpenalty
      \begin{center}%
        \bfseries \ackname
        \@endparpenalty\@M
      \end{center}}%
     {\par\vfil\null\endtitlepage}
  \newenvironment{acknowledgements}{%
      \if@twocolumn
        \section*{\abstractname}%
      \else
        \small
        \begin{center}%
          {\bfseries \ackname\vspace{-.5em}\vspace{\z@}}%
        \end{center}%
        \quotation
      \fi}
      {\if@twocolumn\else\endquotation\fi}
\author{A. Budhiraja \footnote{  Department of Statistics and Operations Research, University of North Carolina, Chapel Hill, NC, U.S.A,  {\tt budhiraja@email.unc.edu}}\and
P. Del Moral \footnote{INRIA Bordeaux-Sud Ouest, Bordeaux
Mathematical Institute, Universit\'e Bordeaux I, 351, cours de la
Lib\'eration 33405 Talence cedex, France, {\tt
Pierre.Del\_Moral@inria.fr}}\and S. Rubenthaler
\footnote{Laboratoire de math\'ematiques J.A. Dieudonn\'e,
universit\'e de Nice-Sophia Antipolis, Parc Valrose, 06108 Nice
cedex 02, France,
 {\tt rubentha@unice.fr}}}
\begin{document}

\title{Discrete Time Markovian Agents Interacting Through a Potential}
\maketitle
\begin{abstract}
A discrete time stochastic model for a multiagent system
given in terms of a large collection of interacting Markov chains is studied.
The evolution of the interacting particles is described through a
time inhomogeneous transition probability kernel that depends on the
`gradient' of the potential field. The particles, in turn, dynamically
modify the potential field through their cumulative input. Interacting Markov processes of the above form have been
suggested as models for active biological transport in response to external stimulus such as a chemical gradient.  One of the
basic mathematical challenges is to develop a general theory of stability for such interacting Markovian systems and for the corresponding
nonlinear Markov processes that arise in the large agent limit.  Such a theory would be key to a mathematical understanding of the interactive
structure formation that results from the complex feedback between the  agents and the potential field.  It will also be a crucial ingredient
in developing simulation schemes that are faithful to the underlying model over long periods of time.

The goal
of this work is to study qualitative properties of the above stochastic
system as the number of particles ($N$) and the time parameter ($n$) approach infinity.  In this regard asymptotic
properties of a deterministic nonlinear dynamical system, that arises
in the propagation of chaos limit of the stochastic model, play a
key role. We show that under suitable conditions this dynamical system
has a unique fixed point. This result allows us to study stability
properties of the underlying stochastic model. We show that as $N\to\infty$,
the stochastic system is well approximated by the dynamical system,
{\em uniformly} over time. As a consequence, for an arbitrarily
initialized system, as $N\to\infty$ and  $n\to\infty$,
the potential field and the empirical measure of the interacting particles
are shown to converge to the unique fixed point of the dynamical system.
In general, simulation of such interacting Markovian systems is a
computationally daunting task. We propose a particle based approximation
for the dynamic potential field which allows for a numerically tractable
simulation scheme. It is shown that this simulation scheme well approximates
the true physical system, uniformly over an infinite time horizon.
\end{abstract}
\textbf{Key words:} Interacting Markov chains, agent based modeling,
multi-agent systems, propagation of chaos, non-linear Markov
processes, stochastic algorithms,
stability, particle approximations, swarm simulations, chemotaxis, reinforced random walk.\\

\noindent \textbf{MSC2000 subject classification.} 60J05, 60K35, 92C45,
70K20, 60K40.

%{\tt xxx Working notes}

%\tableofcontents

%\title[Markovian agents interacting through a potential field]{Discrete Time Markovian Agents  Interacting Through a Potential
%Field.}
%\author{Budhiraja, A., (Del Moral, P. ?), Rubenthaler, S.}
%\maketitle

\section{Introduction.}

In recent years there has been a significant interest in agent based
modeling for complex systems. Examples of such models abound in physical
and social sciences and include problems of biological aggregation \cite{Ste},
chemotactic response dynamics \cite{FriIgn},  self organized networks \cite{LaNo},
communication systems \cite{GrRo}, opinion dynamics\cite{GGB}, multi-target tracking \cite{caron-del-moral-pace-vo-2010},etc.
See \cite{sch, GrRo} for a comprehensive list of references.
One popular model for interacting multi-agent systems (see \cite{sch}
and references therein) consists of a large number of particles that
influence each other indirectly through a dynamic potential field
and can formally be described through the following system of equations.
\begin{eqnarray}
dX_{i}(t) & = & \nabla h(X_{i}(t),t)dt+dW_{i}(t),\; X_{i}(0)=x_{i}\in\R^{d},i=1,\cdots N.\nonumber \\
\frac{\partial}{\partial t}h(t,x) & = & -\alpha h(t,x)+D\Delta h(t,x)+\beta\sum_{i=1}^{N}g(X_{i}(t),x),\;\; h(0,x)=h_{0}(x).\label{rd1}\end{eqnarray}
 Here $W_{i},i=1\cdots N$ are independent Brownian motions which
drive the state processes $X_{i}$ of the $N$ interacting particles.
The interaction between the particles arises indirectly through the
underlying potential field $h$ which changes continuously according
to a diffusion equation and through the aggregated input of the $N$
particles. One example of such an interaction is in chemotactic cell
response dynamics where cells preferentially move towards  a higher
chemical concentration and themselves release chemicals into the
medium, in response to the local information on the environment,
thus modifying the potential field dynamically over time. In this
context, $h(t,x)$ represents the concentration of a chemical at time
$t$ and location $x$. Diffusion of the chemical in the medium is
captured by the Laplacian in \eqref{rd1} and the constant $\alpha>0$
models the rate of decay or dissipation of the chemical.
Contribution of the agents to the chemical concentration field is
given through the last term in the equation. The function $g$
captures the agent response rules and can be used to model a wide
range of phenomenon \cite{sch}. The first equation in \eqref{rd1}
describes the motion of a particle in terms of a diffusion process
with drift in the direction of the gradient of the chemical
concentration. Many variations of this basic model with applications
to a wide range of fields can be found in \cite{sch}. A precise
mathematical treatment of \eqref{rd1} presents significant technical
obstacles and existing results in literature are limited to
simulation and formal asymptotic approximations of the system. In
the current work we will study a discrete time model which captures
essential features of the dynamics in \eqref{rd1} and is also
amenable to a rigourous mathematical treatment. The time evolution
of the agents will be described through a time inhomogeneous
transition probability kernel, where the kernel at time instant $n$
is determined in terms of the {}``gradient of the potential
field\textquotedbl{} at time instant $n-1$. The agents in turn
affect and contribute to the potential field dynamically over time.
Thus as in the formal continuous time setting described above, the
$N$-agent dynamics is strongly coupled and describes a (time
inhomogeneous) Markov chain in $E^{N}$ where $E$ (a compact subset
of $\R^{d}$) is the state space of a typical agent. Although the
model description is considerably simpler in discrete time, our
objective here is to go beyond formal heuristics (as is the current
state of the art for the continuous time model in \eqref{rd1}) and
to formulate and study precise mathematical properties of the
system. We will establish convergence of the stochastic model to the
solution of a non-linear dynamical system, over an arbitrary fixed
time horizon, as the number of agents approach infinity and as a
consequence obtain a propagation of chaos result(Theorem
\ref{lem:convergence-non-uniforme} and Corollary \ref{propchaos}).
We are particularly interested in the stability of the system as
$N\to\infty$, over long periods of time. A mathematical
understanding of the stability behaviour is key to the study of long
term structure formation resulting from the complex interactions
between the agents and the potential field.  Stability results for
the system are also crucial ingredients for studying the behaviour
of approximate simulation schemes over long intervals of time.
Denoting by $n$ the time parameter, we will give conditions under
which, as $N\to\infty$ and $n\to\infty$ (in any order) the potential
field and the empirical measure of the $N$ particles converges to
limits that are independent of the initial configuration (Corollary
\ref{cor:appfixpt}). These limits are characterized as the unique
fixed point of the limit deterministic non-linear dynamical system
(Theorem \ref{cor:unique-fixed-point}). Uniform in time convergence
of the stochastic model to the non-linear deterministic dynamics is
established as well (Theorem \ref{pro:unif-conv-theoretical}). In
general, simulation of interacting Markovian systems is a
computationally daunting task. We propose a particle based
approximation for the dynamic potential field which allows for a
numerically tractable simulation scheme. Using the above stability
results we show that this simulation scheme well approximates the
true physical system, uniformly over an infinite time horizon
(Theorem \ref{thm:dist-uniform}).

Before we give a formal description of the model, we list some common
notation that will be used in this work. For a Polish space $\cls$,
$\clb(\cls)$ will denote the Borel sigma field on $\cls$ and $\clp(\cls)$
the space of probability measures on $(\cls,\clb(\cls))$. For $x\in\cls$,
$\delta_{x}$ will denote the element in $\clp(\cls)$ that puts unit
mass at the point $x$. For $\mu\in\clp(\cls)$ and a $\mu$-integrable
real measurable map $f$ on $(\cls,\clb(\cls))$, we denote $\int_{\cls}fd\mu$
as $\mu(f)$ or $\langle\mu,f\rangle$. Similar notations will be
used for signed measures. For $\mu\in\clp(\cls)$, $S_{N}(\mu)$ denotes
a random measure defined as $\frac{1}{N}\sum_{k=1}^{N}\delta_{\xi_{k}}$,
where $\{\xi_{k}\}_{k=1}^{N}$ are i.i.d $\cls$ valued random variables.
We denote the space of real bounded measurable maps on $(\cls,\clb(\cls))$
as $\mbox{BM}(\cls)$ and for $f\in\mbox{BM}(\cls)$, define $\Vert f\Vert_{\infty}=\sup_{x\in\cls}|f(x)|$.
The space $\{f\in\mbox{BM}(\cls):\Vert f\Vert_{\infty}\le1\}$ will
be denoted as $\clb_{1}(\cls)$. For a signed measure $\mu$ on $(S,\clb(S))$,
we define the total variation norm of $\mu$ as $\sup_{f\in\clb_{1}(\cls)}|\mu(f)|$
and denote it by $\Vert\mu\Vert_{TV}$. A real function $f$ on $\cls$
is said to be $b$-Lipschitz if, for all $x,y\in\cls$, $|f(x)-f(y)|\le b\; d(x,y)$,
where $d$ is the metric on $\cls$. A transition probability kernel
(also referred to as a Markov kernel) on $S$ is a map $\clt$ from
$S\times\clb(S)$ to $[0,1]$ such that for all $x\in S$, $\clt(x,\cdot)\in\clp(\cls)$
and for all $A\in\clb(\cls)$, $\clt(\cdot,A)\in\mbox{BM}(\cls)$.
For $f\in\mbox{BM}(\cls)$ and a transition probability kernel $\clt$
on $\cls$, define $\clt f\in\mbox{BM}(\cls)$ as $\clt f(\cdot)=\int_{\cls}f(y)\clt(\cdot,dy)$.
For a closed subset $\cls_{0}$ of $\cls$, $\mu\in\clp(\cls_{0})$
and a transition kernel $\clt$ on $\cls$, we define $\mu\clt\in\clp(\cls)$
as $\mu\clt(A)=\int_{\cls_{0}}\clt(x,A)\mu(dx)$. For $d\ge1$, $\R^{d}$
will denote the $d$-dimensional Euclidean space, the standard norm
on which will be denoted by $|\cdot|$. $\N$ {[}resp. $\N_{0}${]}
will denote the space of positive {[}resp. nonnegative{]} integers.
Cardinality of a finite set $G$ will be denoted by $|G|$.

\subsection{Model for Interacting Markovian Agents.}

The system consists of $N$ particles whose states at time $k\in\N_{0}$
denoted as $X_{1}(k),\cdots X_{N}(k)$ take value in a compact set
$E\subset\R^{d}$ with a non-empty interior. Given that the $j$-th
particle is at location $x$ at time instant $k-1$, it transitions,
independently of other particles, to a set $A\subset\clb(E)$ with
probability $M^{\eta_{k-1}}(x,A)$. Here $M^{\eta_{k-1}}$ is a transition
probability kernel determined by a nonnegative function $\eta_{k-1}$
which represents the {}``potential field'' at time instant $k-1$.
Specifically, for any nonnegative function $\Psi:E\rightarrow\R^{+}$,
the Markov kernel $M^{\Psi}$ is defined as follows. %\begin{multline*}
\be M^{\Psi}(x,dy)=Q(x,dy)e^{-\lambda(\Psi(x)-\Psi(y))_{+}}\\
 +Q_{0}(x,dy)\left(1-\ensuremath{\int}_{z\in E}Q(x,dz)e^{-\lambda(\Psi(x)-\Psi(z))_{+}}\right).
\ee %\end{multline*}
 Here $\lambda\in(0,\infty)$ and $Q$, $Q_{0}$ are two transition
probability kernels on $E$. Roughly speaking, to generate a sample
from $M^{\Psi}(x,\cdot)$ one follows the following steps:
\begin{itemize}
\item A sample $Y$ is drawn from $Q(x,\cdot)$ and a sample $\tilde{Y}$
is drawn from $Q_{0}(x,\cdot)$.
\item If $\Psi(Y)\ge\Psi(x)$ the sample point $Y$ is accepted. This corresponds
to preferential motion in the direction of the gradient of the potential
field.
\item If $\Psi(Y)<\Psi(x)$ we accept the sample point $Y$ with probability
$e^{-\lambda(\Psi(x)-\Psi(Y))}$ and take $\tilde{Y}$ with probability
$1-e^{-\lambda(\Psi(x)-\Psi(Y))}$.
\end{itemize}
The kernel $Q$ captures particle dynamics in absence of the potential
field, while $Q_{0}$ can be regarded as the perturbation to the nominal
dynamics against the concentration gradient, caused by the potential
field. Note that the effect of $Q_{0}$ increases as $\lambda$ becomes
larger.

We now describe the evolution of the potential field and its interaction
with the particle system. This evolution will aim to capture the essential
features of the PDE in \eqref{rd1} which are: diffusion, dissipation
and dynamic agent input. Let $P$ and $P'$ be transition
probability kernels on $\R^{d}$ having density with respect to some
fixed reference measure $\ell$ on $\R^{d}$ (for example, the Lebesgue
measure). Throughout, we will speak of densities as with respect to
$\ell$. %For a measure $\eta$ having a density, we will denote its
%density by $x\mapsto\eta(x)$.
%Let $\lambda>0$.
Given $m\in\clp(E)$, define a transition probability kernel $R_{m}$
on $\R^{d}$ as \begin{equation}
R_{m}(x,A)=(1-\eps)P(x,A)+\eps mP'(A),\;\; x\in\R^{d},A\in\clb(\R^{d}).\label{transR}\end{equation}
 Then given that the potential field at time $k-1$ is described by
a nonnegative function $\eta_{k-1}$ on $\R^{d}$ satisfying $\int_{\R^{d}}\eta_{k-1}(x)dx=1$,
and the state values of the $N$ particles are $X_{i}(k-1)$, $i=1,\cdots N$,
$\eta_{k}$ is defined by the relation. \begin{equation}
\eta_{k}(y)=\int_{\R^{d}}\eta_{k-1}(x)R_{m_{k-1}}(x,y)\ell(dx),\; m_{k-1}=\frac{1}{N}\sum_{j=1}^{N}\delta_{X_{j}(k-1)}.\label{PFiel}\end{equation}
 In this description, diffusion and advection of the chemical is captured by the
kernel $P$, dissipation by the factor $(1-\eps)$ and contributions
by the agents through the term $\eps m_{k-1}P'$. We remark that unlike
the continuous time setting, here we introduce a single parameter
$\eps$ rather than two distinct parameters $\alpha$ and $\beta$.
This parametrization ensures that the concentration function $\eta_{k}$,
for each $k$ is a probability density on $\R^{d}$. The more general
setting can be considered as well (although not pursued here) by considering
suitably normalized concentration fields.

Thus summarizing, analogous to \eqref{rd1}, the coupled system of
equations describing the evolution of the potential field and particle
states is given as follows. Denote by $\clp^{*}(\R^{d})$ the space
of probability measures on $\R^{d}$ that are absolutely continuous
with respect to $\ell$. We will identify an element in $\clp^{*}(\R^{d})$
with its density and denote both by the same symbol. For $N\ge1$,
define $\Pi^{N}:E^{N}\to\clp(E)$ by the relation \begin{equation}
\Pi^{N}(y_{1},\cdots y_{N})=\frac{1}{N}\sum_{i=1}^{N}\delta_{y_{i}},\;\;(y_{1},\cdots y_{N})\in E^{N}.\label{emp1}\end{equation}
 Fix random variables $X_{1},\cdots,X_{N}$ with values in $E$ and $\eta_{0}^{N}\in\clp^{*}(\R^{d})$.
Let $m_{0}^{N}=\Pi^{N}(X_{1},\cdots,X_{N})$. The interacting system
of particles and the potential field is described as a family $(X^{N}(k),m_{k}^{N},\eta_{k}^{N})_{k\in\N_{0}}$
of $E^{N}\times\clp(E)\times\clp^{*}(\R^{d})$ valued random variables
on a probability space $(\Omega,\clf,\p)$, defined recursively as
follows. Let $X^{N}(0)=(X_{1},\cdots,X_{N})$ and define $\clf_{0}=\sigma\{X^{N}(0)\}$.
For $k\ge1$

\begin{equation}
\begin{cases}
\p(X^{N}(k)\in A\mid\clf_{k-1})=\bigotimes_{j=1}^{N}\left(\delta_{X_{j}^{N}(k-1)}M^{\eta_{k-1}^{N}}\right)(A),\; A\in\clb(E^{N})\\
m_{k}^{N}=\Pi^N(X^N(k)) = \frac{1}{N}\sum_{j=1}^{N}\delta_{X_{j}^{N}(k)}\\
\eta_{k}^{N}=\eta_{k-1}^{N}R_{m_{k-1}^{N}}\\
\mathcal{F}_{k}=\sigma(\eta_{k}^{N},X^{N}(k))\vee\clf_{k-1}\,.\end{cases}\label{eq:def-theoretical-model}\end{equation}
 Along with the $N$ particle system we will also consider the non-linear
Markov model which formally corresponds to the $N\to\infty$ limit
of \eqref{eq:def-theoretical-model}. Define the map $\Phi:\clp(E)\times\clp^{*}(\R^{d})\to\clp(E)\times\clp^{*}(\R^{d})$
by the relation \[
\Phi(m,\eta)=(mM^{\eta},\eta R_{m}),\;(m,\eta)\in\clp(E)\times\clp^{*}(\R^{d}).\]
 Define a sequence of probability measures $(m_{n},\eta_{n})$ on
$\R^{d}$ by the following recurrence formula. Fix $(m_{0},\eta_{0})\in\clp(E)\times\clp^{*}(\R^{d})$.
For $n\in\N_{0}$ \begin{align}
m_{n+1} & =m_{n}M^{\eta_{n}}\,,\label{eq:rec-m}\\
\eta_{n+1} & =\eta_{n}R_{m_{n}}\,.\label{eq:rec-eta}\end{align}
 In other words, \begin{equation}
(m_{n+1},\eta_{n+1})=\Phi(m_{n},\eta_{n})\,.\label{eq:rec01}\end{equation}
Such a coupled system is similar to what can be found in \cite{caron-del-moral-doucet-pace-2010,caron-del-moral-pace-vo-2010}.

\subsection{Main Results.}

We now summarize the main results of this work. For measures $(m,\eta),(m',\eta')\in\clp(E)\times\clp^{*}(\R^{d})$,
we define the norm \[
\Vert(m,\eta)-(m',\eta')\Vert=\Vert m-m'\Vert_{TV}+\Vert\eta-\eta'\Vert_{TV}\,.\]
 We begin with an assumption on the kernels $P,P'$. \begin{assump}
\label{assu:forthm1} The kernel $P'$ is uniformly bounded on $\R^{d}\times\R^{d}$
and $P'(\cdot,y)$ is Lipschitz, uniformly in $y\in\R^{d}$, namely,
for some $M_{P'},l_{P'}\in(0,\infty)$ \[
\sup_{(x,y)\in\R^{d}\times\R^{d}}P'(x,y)\le M_{P'},\;\sup_{y\in\R^{d}}|P'(x,y)-P'(x',y)|\le l_{P'}|x-x'|,\;\forall x,x'\in\R^{d}.\]
 The kernel $P$ satisfies, for some $\bar{M}_{P}\in(0,\infty)$, \[
\sup_{y\in\R^{d}}\int_{x\in\R^{d}}P(x,y)\ell(dx)\leq\bar{M}_{P}\,.\]
 \end{assump} The following result establishes the convergence of
the stochastic system to the non-linear deterministic dynamical system
over any fixed time horizon.
\begin{thm} \label{lem:convergence-non-uniforme}
Suppose that Assumption \ref{assu:forthm1} holds. Also suppose that
\[||\eta_{0}^{N}-\eta_{0}||_{\infty}\to0 \mbox{ and } \sup_{f\in\mathcal{B}_{1}(E)}\mathbb{E}(|\langle m_{0}-m_{0}^{N},f\rangle|)\to 0\;
{ as }\;\ N\to\infty.\] Then, $\forall k\in\N_{0}$, \begin{equation}
\sup_{f\in\clb_{1}(E)}\E(|\langle m_{k}^{N}-m_{k},f\rangle|+\Vert\eta_{k}^{N}-\eta_{k}\Vert_{\infty})\underset{N\rightarrow+\infty}{\longrightarrow}0\,.\label{toshow}\end{equation}
 \end{thm}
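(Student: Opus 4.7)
The plan is to proceed by induction on $k$: the case $k=0$ is exactly the hypothesis on the initial data, so I would focus on the inductive step, treating the particle part ($m_k^N$ vs $m_k$) and the potential-field part ($\eta_k^N$ vs $\eta_k$) separately, in each case via a ``sampling error $+$ linear-in-$m$ error $+$ Lipschitz-in-$\eta$ error'' decomposition.

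For the particle part, I would write $\langle m_k^N-m_k,f\rangle=(I)+(II)+(III)+(IV)$ with
\begin{align*}
(I) &= \langle m_k^N - m_{k-1}^N M^{\eta_{k-1}^N}, f\rangle, & (II) &= \langle m_{k-1}^N - m_{k-1}, M^{\eta_{k-1}}f\rangle, \\
(III) &= \langle m_{k-1}^N - m_{k-1}, (M^{\eta_{k-1}^N} - M^{\eta_{k-1}})f\rangle, & (IV) &= \langle m_{k-1}, (M^{\eta_{k-1}^N} - M^{\eta_{k-1}})f\rangle.
\end{align*}
Conditionally on $\mathcal{F}_{k-1}$, the $N$ summands making up $(I)$ are centered, independent and bounded by $2\|f\|_\infty$, so $\mathbb{E}|(I)|\le 2\|f\|_\infty/\sqrt{N}$ uniformly in $f\in\mathcal{B}_1(E)$. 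Term $(II)$ tests the \emph{deterministic} function $M^{\eta_{k-1}}f\in\mathcal{B}_1(E)$ against $m_{k-1}^N-m_{k-1}$ and is controlled by the induction hypothesis. For $(III)$ and $(IV)$ the key lemma I would need is the Lipschitz estimate
\[
\|M^\Psi(x,\cdot)-M^{\Psi'}(x,\cdot)\|_{TV}\le C_\lambda\|\Psi-\Psi'\|_\infty,\qquad x\in E,
\]
proved directly from the formula for $M^\Psi$ using $|e^{-a}-e^{-b}|\le|a-b|$ and $|(\Psi(x)-\Psi(y))_+-(\Psi'(x)-\Psi'(y))_+|\le 2\|\Psi-\Psi'\|_\infty$; both $(III)$ and $(IV)$ are then dominated by a constant multiple of $\|\eta_{k-1}^N-\eta_{k-1}\|_\infty$, whose expectation vanishes by induction.

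For the potential field, the fact that $\eta_{k-1}^N$ integrates to one, together with \eqref{transR}, gives
\[
\eta_k^N(y)-\eta_k(y) = (1-\eps)\int(\eta_{k-1}^N-\eta_{k-1})(x)\,P(x,y)\,\ell(dx) + \eps\,\langle m_{k-1}^N-m_{k-1},P'(\cdot,y)\rangle.
\]
Assumption~\ref{assu:forthm1} bounds the first term in sup norm by $(1-\eps)\bar M_P\|\eta_{k-1}^N-\eta_{k-1}\|_\infty$, which is handled by induction. The main obstacle is the second term: the induction hypothesis only controls $\sup_{f\in\mathcal{B}_1(E)}\mathbb{E}|\langle m_{k-1}^N-m_{k-1},f\rangle|$, with the supremum \emph{outside} the expectation, while here one needs $\mathbb{E}\bigl(\sup_y|\langle m_{k-1}^N-m_{k-1},P'(\cdot,y)\rangle|\bigr)$, with the supremum \emph{inside}.

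To bridge this gap I would use the other half of Assumption~\ref{assu:forthm1}: the family $\{P'(\cdot,y):y\in\mathbb{R}^d\}$ is uniformly bounded by $M_{P'}$ and equi-Lipschitz in $x$ with constant $l_{P'}$, so, restricted to the compact set $E$, it is totally bounded in $C(E)$ by Arzel\`a--Ascoli. Fixing $\delta>0$, one obtains a finite set $g_1,\dots,g_K\in C(E)$ with $\|g_i\|_\infty\le M_{P'}$ such that every $P'(\cdot,y)$ lies within $\delta$ (in sup norm on $E$) of some $g_i$. Then
\[
\sup_y|\langle m_{k-1}^N-m_{k-1},P'(\cdot,y)\rangle|\le \max_{1\le i\le K}|\langle m_{k-1}^N-m_{k-1},g_i\rangle|+2\delta,
\]
and since the max is over a \emph{finite} set, its expectation is bounded by $\sum_{i=1}^K\mathbb{E}|\langle m_{k-1}^N-m_{k-1},g_i\rangle|\le K M_{P'}\sup_{f\in\mathcal{B}_1(E)}\mathbb{E}|\langle m_{k-1}^N-m_{k-1},f\rangle|$, which tends to $0$ by the induction hypothesis. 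Sending first $N\to\infty$ and then $\delta\to 0$ completes the inductive step and hence establishes \eqref{toshow}.
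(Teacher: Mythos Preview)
Your proof is correct and follows essentially the same route as the paper: induction on $k$, a sampling/linear/Lipschitz decomposition for the $m$-part (the paper uses three terms rather than your four, combining your (III) and (IV) into a single $\langle m_{k-1}^{N},(M^{\eta_{k-1}^{N}}-M^{\eta_{k-1}})f\rangle$ bounded by $4\lambda\Vert\eta_{k-1}^{N}-\eta_{k-1}\Vert_{\infty}$), and the same Arzel\`a--Ascoli finite-net argument to pull the supremum over $y$ inside the expectation for the $P'$-term. The only cosmetic difference is that your term (III) is an unnecessary split-off, being already dominated by $C_\lambda\Vert\eta_{k-1}^{N}-\eta_{k-1}\Vert_{\infty}$ via the same Lipschitz bound you use for (IV).
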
 We remark that $\E\Vert\eta_{k}^{N}-\eta_{k}\Vert_{\infty}\to0$
implies, by an application of Scheffe's theorem, that $\E\Vert\eta_{k}^{N}-\eta_{k}\Vert_{TV}\to0$.
This observation will be used in the proof of Theorem \ref{pro:unif-conv-theoretical}

As an immediate consequence of this result we obtain the following
{\em propagation of chaos} result. For $p\ge1$, denote by $m_{k}^{N,p}$
the probability law of $(X_{1}^{N}(k),\cdots X_{p}^{N}(k))$ on $E^{p}$.
\begin{cor} \label{propchaos} Under Assumption \ref{assu:forthm1},
for every $p\in\N$, $m_{k}^{N,p}$ converges weakly to $m_{k}^{\otimes p}$,
as $N\to\infty$. \end{cor}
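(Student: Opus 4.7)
The plan is to deduce propagation of chaos from the convergence of the empirical measure established in Theorem \ref{lem:convergence-non-uniforme}, via the well-known equivalence (due to Sznitman) between these two notions for exchangeable interacting particle systems.

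First I would verify exchangeability at every time $k$. From \eqref{eq:def-theoretical-model}, conditionally on $\clf_{k-1}$ the variables $X_j^N(k)$ are independent with law $\delta_{X_j^N(k-1)}M^{\eta_{k-1}^N}$, and $\eta_{k-1}^N$ depends on the particle configuration only through the symmetric functional $m_{k-1}^N = \Pi^N(X^N(k-1))$. Provided the joint law of $(X_1^N(0),\ldots,X_N^N(0))$ is exchangeable (as is implicit in the corollary statement, and certainly holds when the initial particles are i.i.d.\ from $m_0$), a direct induction shows the joint law of $(X_1^N(k),\ldots,X_N^N(k))$ is exchangeable for every $k\in\N_0$.

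Next I would invoke Sznitman's equivalence: for an exchangeable $N$-tuple in $E^N$ and a deterministic $m\in\clp(E)$, the marginal laws on $E^p$ converge weakly to $m^{\otimes p}$ for every $p\in\N$ if and only if the associated empirical measure converges in distribution, on $\clp(E)$ endowed with the weak topology, to the constant random measure $m$. The required hypothesis is supplied by Theorem \ref{lem:convergence-non-uniforme}: the bound $\sup_{f\in\clb_1(E)}\E|\langle m_k^N - m_k, f\rangle|\to 0$ implies, for every bounded continuous $f$ on $E$, that $\langle m_k^N, f\rangle \to \langle m_k, f\rangle$ in $L^1(\p)$ and hence in probability, which (since $m_k$ is deterministic) is exactly the weak convergence in distribution of $m_k^N$ to $m_k$ in $\clp(E)$.

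Combining these two ingredients yields the weak convergence of $m_k^{N,p}$ to $m_k^{\otimes p}$ for all $p\in\N$. There is no substantive obstacle here: the exchangeability induction is immediate from the symmetric way in which the configuration enters the transition kernel $M^{\eta_{k-1}^N}$ and the potential recursion $\eta_k^N = \eta_{k-1}^N R_{m_{k-1}^N}$, and the passage from the $L^1$ bound of Theorem \ref{lem:convergence-non-uniforme} to convergence in distribution of $\clp(E)$-valued random elements to a constant is standard.
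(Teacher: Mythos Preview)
Your proposal is correct and follows essentially the same route as the paper: both arguments deduce the corollary from Theorem~\ref{lem:convergence-non-uniforme} by invoking Sznitman's equivalence between convergence of the empirical measure and $p$-chaoticity for exchangeable systems (the paper cites Proposition~2.2(i) of \cite{sznitman-91} directly). You are simply more explicit about the exchangeability induction and the passage from $L^1$ convergence of $\langle m_k^N,f\rangle$ to weak convergence of $m_k^N$ in $\clp(E)$, both of which the paper leaves implicit.
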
 We now study time asymptotic properties
of the system. We begin with two basic assumptions. The first is on
the kernel $Q$.
\begin{assump}\label{assump:Q}There exist $\epsilon_{Q}\in(0,1)$
and $\ell_{1}\in\clp(E)$ such that $\forall x\in E$, $\forall$
$A\in\clb(E)$, $Q(x,A)\geq\epsilon_{Q}\ell_{1}(A)$.\end{assump}
It is well known that under Assumption \ref{assump:Q}, $Q$ is $(1-\epsilon_{Q})$-contracting
for the total variation norm (see Lemma \ref{lem:dobrushin} in Appendix
for a proof), i.e \[
\Vert\mu Q-\mu'Q\Vert_{TV}\le(1-\eps_{Q})\Vert\mu-\mu'\Vert_{TV},\forall\mu,\mu'\in\clp(E).\]
 Next, we will make the following assumption on the kernels $P,P'$.
\begin{assump}\label{assump:P}
\begin{enumerate}
\item \label{assump:P-01} There exists $\beta(P')\in(0,1)$ such that for
all $m,m'\in\clp(E)$, \[
\Vert mP'-m'P'\Vert_{TV}\leq\beta(P')\Vert m-m'\Vert_{TV}.\]

\item The kernel $P$ is uniformly bounded on $\R^{d}\times\R^{d}$, i.e.
for some $M_{P}\in(0,\infty)$ \[
\sup_{(x,y)\in\R^{d}\times\R^{d}}P(x,y)\le M_{P}.\]

\end{enumerate}
\end{assump} We begin with the following result on the fixed points
of the dynamical system \eqref{eq:rec01}.
\begin{thm} \label{cor:unique-fixed-point}
Suppose that Assumptions \ref{assu:forthm1}, \ref{assump:Q} and
\ref{assump:P} hold. Then there are $\lambda_{0},\eps_{0}\in(0,\infty)$
such that for all $\lambda\le\lambda_{0}$ and $\epsilon\le\eps_{0}$, $\Phi$
has a unique fixed point in $\mathcal{P}(E)\times\mathcal{P}^{*}(\R^{d})$.
\end{thm}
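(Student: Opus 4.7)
The strategy is to reduce the two-variable fixed-point equation $\Phi(m,\eta)=(m,\eta)$ on $\clp(E)\times\clp^{*}(\R^{d})$ to a one-variable equation on $\clp(E)$ and then apply the Banach contraction principle. The key observation is that for any fixed $m\in\clp(E)$ the partial equation $\eta=\eta R_{m}=(1-\epsilon)\eta P+\epsilon mP'$ admits a unique solution: since $\mu\mapsto(1-\epsilon)\mu P$ is strictly contracting on signed measures in total variation, this solution is given explicitly by the geometric series
\[
\Gamma(m):=\epsilon\sum_{k\ge 0}(1-\epsilon)^{k}\,mP'P^{k},
\]
a convex combination of probability measures whose density, by Assumptions \ref{assu:forthm1} and \ref{assump:P}, is bounded by $M:=\max(M_{P},M_{P'})$. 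Consequently, the fixed points of $\Phi$ are exactly the pairs $(m^{*},\Gamma(m^{*}))$ with $m^{*}$ a fixed point of the single-variable map $\Psi(m):=mM^{\Gamma(m)}$ on $\clp(E)$.

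Two estimates drive the analysis of $\Psi$. First, a term-by-term bound on the series defining $\Gamma$, using $\|(m-m')P'\|_{TV}\le\beta(P')\|m-m'\|_{TV}$ from Assumption \ref{assump:P}(1), the pointwise bound $\|gP\|_{\infty}\le M_{P}\|g\|_{L^{1}}$ from Assumption \ref{assump:P}(2), and the non-expansiveness of Markov kernels on $L^{1}$, yields the Lipschitz estimate
\[
\|\Gamma(m)-\Gamma(m')\|_{\infty}\le C\,\|m-m'\|_{TV},\qquad C:=M_{P'}+M_{P}\beta(P').
\]
Second, since $\|\Gamma(m)\|_{\infty}\le M$ one has $(\Gamma(m)(x)-\Gamma(m)(y))_{+}\le M$, so Assumption \ref{assump:Q} yields the uniform Doeblin minorisation $M^{\Gamma(m)}(x,\cdot)\ge e^{-\lambda M}\epsilon_{Q}\ell_{1}(\cdot)$, whence by Lemma \ref{lem:dobrushin},
\[
\|(m-m')M^{\Gamma(m)}\|_{TV}\le\bigl(1-\epsilon_{Q}e^{-\lambda M}\bigr)\|m-m'\|_{TV}.
\]
Using that $t\mapsto e^{-\lambda t}$ is $\lambda$-Lipschitz on $[0,\infty)$, a direct computation from the definition of $M^{\eta}$ gives
\[
\|M^{\eta}(x,\cdot)-M^{\eta'}(x,\cdot)\|_{TV}\le 2\lambda\bigl(|\eta(x)-\eta'(x)|+Q(|\eta-\eta'|)(x)\bigr)\le 4\lambda\,\|\eta-\eta'\|_{\infty},
\]
so that $\|m'(M^{\Gamma(m)}-M^{\Gamma(m')})\|_{TV}\le 4\lambda C\,\|m-m'\|_{TV}$.

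Combining the two estimates via the triangle inequality gives
\[
\|\Psi(m)-\Psi(m')\|_{TV}\le\bigl((1-\epsilon_{Q}e^{-\lambda M})+4\lambda C\bigr)\|m-m'\|_{TV}.
\]
Since the prefactor tends to $1-\epsilon_{Q}<1$ as $\lambda\downarrow 0$, one may choose $\lambda_{0}>0$ and $\epsilon_{0}\in(0,1)$ so that this factor is strictly less than $1$ whenever $\lambda\le\lambda_{0}$ and $\epsilon\le\epsilon_{0}$. As $(\clp(E),\|\cdot\|_{TV})$ is a complete metric space, the Banach contraction principle produces a unique fixed point $m^{*}$ of $\Psi$, and $(m^{*},\Gamma(m^{*}))$ is then the unique fixed point of $\Phi$ in $\clp(E)\times\clp^{*}(\R^{d})$.

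\textbf{Main obstacle.} The essential difficulty is the choice of norm: attempting to contract $\Phi$ directly on $\clp(E)\times\clp^{*}(\R^{d})$ with the norm $\|m-m'\|_{TV}+\|\eta-\eta'\|_{TV}$ fails because the natural sensitivity bound for $\eta\mapsto M^{\eta}$ produces a term $m'(|\eta-\eta'|)$ that cannot be dominated by $\|\eta-\eta'\|_{TV}$ unless $m'$ has a bounded density on $E$, which is unavailable. The explicit formula for $\Gamma$ neatly resolves this by turning sup-norm control of $\eta-\eta'$ into total-variation control of $m-m'$, so that the whole contraction can be carried out inside the single complete space $(\clp(E),\|\cdot\|_{TV})$.
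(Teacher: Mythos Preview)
Your argument is correct and is a genuinely different route from the paper's. The paper never eliminates the $\eta$ variable: it works directly on $\clp(E)\times\clp^{*}(\R^{d})$ and, instead of making $\Phi$ itself a contraction, shows (Lemma \ref{lem:Phi-contracting}) that the iterates satisfy $\Vert\Phi^{n}(m_{0},\eta_{0})-\Phi^{n}(m_{0}',\eta_{0}')\Vert\le 4\theta^{n-1}$. The mechanism is a two--step recursion: after one application of $\Phi$ the density of $\eta_{k-1}$ is bounded by $M_{P,P'}$, so $\sup_{x}|\eta_{k-1}(x)-\eta_{k-1}'(x)|$ is controlled by $\Vert\eta_{k-2}-\eta_{k-2}'\Vert_{TV}$ and $\Vert m_{k-2}-m_{k-2}'\Vert_{TV}$, yielding an inequality of the form $\alpha_{k}\le a\alpha_{k-1}+b\alpha_{k-2}$ which is then turned into geometric decay. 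Existence of a fixed point follows by showing $(\Phi^{k}(m_{0},\eta_{0}))_{k}$ is Cauchy.

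Your reduction via $\Gamma(m)=\epsilon\sum_{k\ge0}(1-\epsilon)^{k}mP'P^{k}$ is cleaner for the fixed--point statement alone and in fact proves slightly more: the contraction factor $(1-\epsilon_{Q}e^{-\lambda M})+4\lambda C$ is independent of $\epsilon$, so no smallness of $\epsilon$ is needed beyond $\epsilon\in(0,1)$. On the other hand, the paper's Lemma \ref{lem:Phi-contracting} is not merely a device for Theorem \ref{cor:unique-fixed-point}: the geometric decay of $\Phi^{n}$ in the product norm is exactly what drives the telescoping arguments behind Theorems \ref{pro:unif-conv-theoretical} and \ref{particleinf}. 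Your $\Psi$--contraction does not immediately yield that estimate, so while your proof stands on its own for this theorem, the paper's approach is doing double duty for the uniform--in--time results that follow.
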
 Recall the sequence $(m_{n},\eta_{n})$ defined through
equation \eqref{eq:rec01} recursively with a fixed initial pair $(m_{0},\eta_{0})\in\clp(E)\times\clp^{*}(\R^{d})$.
Also recall the collection of random variables $(X^{N}(k),m_{k}^{N},\eta_{k}^{N})_{k\in\N_{0}}$
with values in $E^{N}\times\clp(E)\times\clp^{*}(\R^{d})$ defined
in \eqref{eq:def-theoretical-model} starting with a fixed $\eta_{0}^{N}\in\clp^{*}(\R^{d})$
and $X_{1},\dots,X_{N}\in E$.

We now consider uniform in time convergence of the stochastic system
to the non-linear dynamical system. For that we will make the following
additional assumptions. \begin{assump}\label{leftover}
\begin{enumerate}
\item $P(\cdot,y)$ is Lipschitz uniformly in $y\in\R^{d}$ namely, for
some $l_{P}\in(0,\infty)$ \[
\sup_{y\in\R^{d}}|P(x,y)-P(x',y)|\le l_{P}|x-x'|,\;\forall x,x'\in\R^{d}.\]

\item \label{enu:lip-P-01}There exist $p\in\mathcal{P}(\R^{d})$ and $\ti l_{P'}$
such that $\forall x,x'\in E$, $A\in\clb(\R^{d})$, \[
|P'(x,A)-P'(x',A)|\leq\ti l_{P'}|x-x'|p(A).\]

\item $P(x,\cdot)$, $P'(x,\cdot)$ are Lipschitz uniformly in $x\in\R^{d}$
namely, for some $\bar{l}_{P.P'}\in(0,\infty)$ \[
\sup_{x\in\R^{d}}\max\{|P(x,y)-P(x,y')|,|P'(x,y)-P'(x,y')|\}\le\bar{l}_{P,P'}|y-y'|,\;\forall y,y'\in\R^{d}.\]

\item For some $\bar{M}_{P,P'}\in(0,\infty)$ \[
\sup_{y\in\R^{d}}\max\{\int_{x\in\R^{d}}P(x,y)l(dx),\int_{x\in\R^{d}}P'(x,y)l(dx)\}\leq\bar{M}_{P,P'\,.}\]

\end{enumerate}
\end{assump}

Denote by $\ell_{E}$ the restriction of the measure $l$ to $E$.

\begin{assump}\label{assump:lipQ} For all $x\in E$, $Q(x,.)$ and
$Q_{0}(x,.)$ have densities with respect to $\ell_{E}$. The densities
are bounded on $E\times E$, namely for some $M_{Q,Q_{0}}\in(0,\infty)$
\[
\sup_{(x,y)\in E\times E}\max\{Q(x,y),Q_{0}(x,y)\}\le M_{Q,Q_{0}}.\]
 Furthermore, for some $l_{Q,Q_{0}}\in(0,\infty)$ and $\bar{p}\in\clp(E)$
\begin{multline*}
\max\{|Q(x,A)-Q(x',A)|,|Q_{0}(x,A)-Q_{0}(x',A)|\}\le l_{Q,Q_{0}}|x-x'|\bar{p}(A)\;\forall x,x'\in E,A\in\clb(E).\end{multline*}
 \end{assump}
\begin{thm} \label{pro:unif-conv-theoretical}Suppose
that Assumptions \ref{assu:forthm1} \ref{assump:Q}, \ref{assump:P},
\ref{leftover} and \ref{assump:lipQ} hold. Let $\eps_{0},\lambda_{0}$
be as in Theorem \ref{cor:unique-fixed-point}. Then, whenever $\epsilon\le\eps_{0}$ and
$\lambda\le\lambda_{0}$, we have:\\

(i) For some $c_{0}\in(0,\infty)$, \begin{multline*}
\forall\delta>0\,,\,\exists N_{0},n_{0}\in\N\mbox{ such that }\forall n\ge n_{0}\mbox{ and }N\ge N_{0}\,,\,\sup_{\Vert f\Vert_{\infty}\leq1}\E(|\langle m_{n}^{N}-m_{n},f\rangle|+\Vert\eta_{n}^{N}-\eta_{n}\Vert_{TV})\leq c_{0}\delta\,.\end{multline*}
 (ii) If $||\eta_{0}^{N}-\eta_{0}||_{\infty}\to0$ and $\sup_{f\in\mathcal{B}_{1}(E)}\mathbb{E}(|\langle m_{0}-m_{0}^{N},f\rangle|)\to0$
as $N\to\infty$, then for some $c_{1}\in(0,\infty)$ \begin{multline*}
\forall\delta>0\,,\,\exists N_{0}\in\N\mbox{ such that }\forall n\mbox{ and }N\ge N_{0}\,,\,\sup_{\Vert f\Vert_{\infty}\leq1}\E(|\langle m_{n}^{N}-m_{n},f\rangle|+\Vert\eta_{n}^{N}-\eta_{n}\Vert_{TV})\leq c_{1}\delta\,.\end{multline*}
 \end{thm}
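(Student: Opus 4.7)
The plan is to combine three ingredients: a one-step joint contractivity of $\Phi$, valid under the smallness regime $\lambda\le\lambda_0$, $\epsilon\le\epsilon_0$ of Theorem \ref{cor:unique-fixed-point}; a one-step Monte Carlo bound of order $N^{-1/2}$ for the particle empirical measure; and a geometric-series iteration. Throughout I would track the quantity
\[
a_n \;=\; \sup_{\|f\|_\infty \le 1}\E\bigl[|\langle m_n^N - m_n, f\rangle|\bigr] \;+\; \E\|\eta_n^N - \eta_n\|_{TV},
\]
which is exactly what the theorem controls.

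The first step is to establish a one-step joint contraction of the form
\[
\sup_{\|f\|_\infty \le 1}|\langle mM^\eta - m'M^{\eta'}, f\rangle| \;+\; \|\eta R_m - \eta' R_{m'}\|_{TV} \;\le\; \rho\Bigl(\sup_{\|f\|_\infty \le 1}|\langle m-m', f\rangle| + \|\eta-\eta'\|_{TV}\Bigr)
\]
for some $\rho=\rho(\lambda,\epsilon)\in(0,1)$, at least when restricted to measures $m,m'$ with uniformly bounded densities (which is automatic after one application of $M^\eta$, using the density bound from Assumption \ref{assump:lipQ}). The diagonal $m$-contraction comes from Assumption \ref{assump:Q}: for $\lambda$ small, $M^\eta$ inherits the uniform minorization $M^\eta(x,\cdot)\ge\epsilon_Q e^{-\lambda D}\ell_1$, where $D$ is a uniform bound on the oscillation of $\eta$ (using Assumptions \ref{assu:forthm1} and \ref{assump:P} to keep the $\eta$-densities uniformly bounded). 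The standard argument that Dobrushin contraction propagates to the weak norm (split $Mf = (1-\epsilon_*)\tilde Mf + \epsilon_*\nu(f)$ and use $\langle m-m', 1\rangle = 0$) then gives the weak-norm contraction. The diagonal $\eta$-contraction is immediate from $\eta R_m = (1-\epsilon)\eta P + \epsilon mP'$, yielding a factor of $(1-\epsilon)$. The two cross terms, $\sup_{\|f\|_\infty\le1}|\langle m',(M^\eta-M^{\eta'})f\rangle|$ and $\|\eta(R_m-R_{m'})\|_{TV}$, can be bounded by $C\lambda\|\eta-\eta'\|_{TV}$ (using the explicit formula for $M^\eta$, the inequality $|e^{-a}-e^{-b}|\le|a-b|$, and the density bounds of Assumption \ref{assump:lipQ}) and by $\epsilon\beta(P')\|m-m'\|_{TV}$ (from Assumption \ref{assump:P}(\ref{assump:P-01})) respectively; for $\lambda,\epsilon$ sufficiently small these cross terms are absorbed into the diagonal contractions. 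Much of this parallels the analysis behind Theorem \ref{cor:unique-fixed-point}.

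Next I would derive the recursion $a_n \le \rho a_{n-1} + C/\sqrt{N}$. Conditionally on $\clf_{n-1}$, the particles $X_1^N(n),\dots,X_N^N(n)$ are independent with respective laws $\delta_{X_i^N(n-1)}M^{\eta_{n-1}^N}$, so a direct variance estimate yields
\[
\sup_{\|f\|_\infty \le 1}\E\bigl[|\langle m_n^N - m_{n-1}^N M^{\eta_{n-1}^N}, f\rangle|\bigr] \;\le\; \frac{1}{\sqrt{N}},
\]
while $\eta_n^N = \eta_{n-1}^N R_{m_{n-1}^N}$ introduces no further sampling noise. Inserting $\Phi(m_{n-1}^N,\eta_{n-1}^N)$ as an intermediate point between $(m_n^N,\eta_n^N)$ and $(m_n,\eta_n)=\Phi(m_{n-1},\eta_{n-1})$, applying the one-step contraction to the $\Phi$-difference, and taking expectations yields the recursion. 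Iterating gives
\[
a_n \;\le\; \rho^n a_0 + \frac{C}{(1-\rho)\sqrt{N}}.
\]

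Both parts of the theorem now fall out. For (i), $a_0\le 2$ trivially; given $\delta>0$ I pick $n_0$ with $2\rho^{n_0}\le\delta$ and $N_0$ with $C/((1-\rho)\sqrt{N_0})\le\delta$, so that $a_n \le 2\delta$ for every $n\ge n_0$ and $N\ge N_0$, giving the claim with $c_0=2$. For (ii), the hypothesis forces $a_0\to 0$ as $N\to\infty$, so $\rho^n a_0 \le a_0$ is uniformly small in $n$ once $N$ is large, and the second term is tuned as in (i). The main obstacle I expect is the one-step joint contractivity: mixing the weak norm on the $m$-component with the TV norm on the $\eta$-component, and controlling the cross dependences of $M^\eta$ on $\eta$ and of $R_m$ on $m$, requires a careful coupling of Assumptions \ref{assump:Q}, \ref{assump:P}, \ref{leftover}, and \ref{assump:lipQ}, and this is precisely where the smallness of $\lambda$ and $\epsilon$ from Theorem \ref{cor:unique-fixed-point} becomes essential.
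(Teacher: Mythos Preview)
Your scheme hinges on the one--step recursion $a_n\le\rho\,a_{n-1}+C/\sqrt N$, and this is where the argument breaks. There are two intertwined problems.

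First, the cross term $m_{n-1}^N(M^{\eta_{n-1}^N}-M^{\eta_{n-1}})$ is \emph{not} controlled by $\|\eta_{n-1}^N-\eta_{n-1}\|_{TV}$ when $m_{n-1}^N$ is an empirical measure. Expanding as in \eqref{eq:rec-maj-04} and using $|e^{-a}-e^{-b}|\le|a-b|$ produces the term $\int m_{n-1}^N(dx)\,|\eta_{n-1}^N(x)-\eta_{n-1}(x)|=\tfrac1N\sum_i|\eta_{n-1}^N(X_i)-\eta_{n-1}(X_i)|$, which is dominated by $\|\eta_{n-1}^N-\eta_{n-1}\|_\infty$, not by the TV norm; Assumption~\ref{assump:lipQ} bounds the density of $Q$, not of $m_{n-1}^N$, so it only handles the $y$--integral, not the $x$--integral. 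This is exactly why the paper's contraction estimate \eqref{eq:rec-contraction} is a \emph{two}--step relation (coupling $\alpha_k$ to both $\alpha_{k-1}$ and $\alpha_{k-2}$): one first rewrites $\eta_{k-1}=\eta_{k-2}R_{m_{k-2}}$ so that the sup--norm of the density difference can be bounded by TV norms at step $k-2$ via the kernel bounds (see \eqref{eq:maj-densite-01}--\eqref{eq:maj-diff-expo}).

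Second, and more serious, the norms you are mixing do not close under expectation. Your $a_n$ carries $\sup_f\E|\langle m_n^N-m_n,f\rangle|$, but any pointwise contraction of $\Phi$ will, after taking expectations, feed you $\E\sup_f|\langle m_{n-1}^N-m_{n-1},f\rangle|=\E\|m_{n-1}^N-m_{n-1}\|_{TV}$ on the right (for instance through the $\eta$--update $\|\eta R_m-\eta R_{m'}\|_{TV}\le\epsilon\beta(P')\|m-m'\|_{TV}$). This quantity is \emph{not} $a_{n-1}$; for diffuse $m_{n-1}$ it stays near $2$ for every $N$. Conversely, if you track $\E\|m_n^N-m_n\|_{TV}$ so that the contraction closes, then the Monte Carlo step fails: $\E\|m_n^N-m_{n-1}^NM^{\eta_{n-1}^N}\|_{TV}$ is not $O(N^{-1/2})$.

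The paper resolves both issues simultaneously. It uses the telescopic decomposition \eqref{eq:telescopic-sum-01}--\eqref{ins1705}, and for each local error it first applies one step of $\Phi$ so that both arguments $(m_{k,k+1}^N,\eta_{k,k+1}^N)$ and $(m_{k,k+1},\eta_{k,k+1})$ have bounded densities; only then does it invoke the TV contraction of Corollary~\ref{firsttwo}. The passage from ``$\sup_f\E$'' to an honest TV bound on these smoothed local errors is done through the Lipschitz regularity of the kernels (Assumptions~\ref{leftover} and~\ref{assump:lipQ}) and the Ascoli--Arzel\`a compactness of Lemma~\ref{rem:ascoli}, which replaces the uncountable $\sup_f$ by a finite maximum that commutes with $\E$ up to an arbitrary $\delta$ (see \eqref{ins343}--\eqref{eq:maj-phi-02}). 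These are not cosmetic steps; without them the recursion you propose does not hold.
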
 As an immediate consequence of Theorems \ref{cor:unique-fixed-point}
and \ref{pro:unif-conv-theoretical} we have that under suitable conditions
$(m_{k}^{N},\eta_{k}^{N})$ approaches the unique fixed point of $\Phi$
as $k\to\infty$ and $N\to\infty$. Namely,
\begin{cor} \label{cor:appfixpt}
Suppose that Assumptions \ref{assu:forthm1} \ref{assump:Q}, \ref{assump:P},
\ref{leftover} and \ref{assump:lipQ} hold. Let $\eps_{0},\lambda_{0}$
be as in Theorem \ref{cor:unique-fixed-point}. Fix $\eps\in(0,\eps_{0})$
and $\lambda\in(0,\lambda_{0})$. Denote the corresponding unique
fixed point of $\Phi$ by $(m_{\infty},\eta_{\infty})$. Then, %whenever $\Vert\eta_{0}^{N}-\eta_{0}\Vert_{\infty}\conveN0$,
%$\sup_{f\in\mathcal{B}_{1}(E)}\E(|\langle m_{0}-m_{0}^{N},f\rangle|)\conveN0$,
 $ $\begin{eqnarray*}
 &  & \limsup_{n\to\infty}\limsup_{N\to\infty}\sup_{\Vert f\Vert_{\infty}\leq1}\E(|\langle m_{n}^{N}-m_{\infty},f\rangle|+\Vert\eta_{n}^{N}-\eta_{\infty}\Vert_{TV})\\
 & = & \limsup_{N\to\infty}\limsup_{n\to\infty}\sup_{\Vert f\Vert_{\infty}\leq1}\E(|\langle m_{n}^{N}-m_{\infty},f\rangle|+\Vert\eta_{n}^{N}-\eta_{\infty}\Vert_{TV})\\
 & = & 0.\end{eqnarray*}
 \end{cor}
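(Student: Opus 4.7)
The plan is to decompose the quantity under the $\limsup$ via the triangle inequality into a stochastic error and a deterministic error. For any measurable $f$ on $E$ with $\Vert f\Vert_\infty\le 1$,
\begin{equation*}
\E(|\langle m_n^N-m_\infty,f\rangle|+\Vert\eta_n^N-\eta_\infty\Vert_{TV}) \leq \E(|\langle m_n^N-m_n,f\rangle|+\Vert\eta_n^N-\eta_n\Vert_{TV}) + \Vert m_n-m_\infty\Vert_{TV}+\Vert\eta_n-\eta_\infty\Vert_{TV}.
\end{equation*}
After taking the supremum over $\Vert f\Vert_\infty\le1$, the last two terms reassemble into the deterministic quantity $\Vert(m_n,\eta_n)-(m_\infty,\eta_\infty)\Vert$, which does not depend on $N$. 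The first term on the right will be bounded by Theorem \ref{pro:unif-conv-theoretical}(i), and the remaining deterministic piece by contractivity of $\Phi$.

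For the deterministic ingredient, I will appeal to the contraction mechanism that underlies Theorem \ref{cor:unique-fixed-point}: under the smallness conditions $\lambda\le\lambda_0$ and $\eps\le\eps_0$, the map $\Phi$ is a strict contraction on $(\clp(E)\times\clp^*(\R^d),\Vert\cdot\Vert)$, and the Banach fixed point theorem then yields geometric convergence of the iterates $\Phi^n(m_0,\eta_0)=(m_n,\eta_n)$ to the unique fixed point $(m_\infty,\eta_\infty)$ from every starting point. In particular, $\Vert m_n-m_\infty\Vert_{TV}+\Vert\eta_n-\eta_\infty\Vert_{TV}\to 0$ as $n\to\infty$.

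Now fix $\delta>0$ and let $N_0,n_0$ be as in Theorem \ref{pro:unif-conv-theoretical}(i). For the order $\limsup_{n\to\infty}\limsup_{N\to\infty}$, fix $n\ge n_0$ and let $N\to\infty$: the above decomposition bounds the $\limsup_N$ by $c_0\delta+\Vert(m_n,\eta_n)-(m_\infty,\eta_\infty)\Vert$; letting $n\to\infty$ sends the deterministic remainder to zero by the preceding paragraph, leaving a bound of $c_0\delta$, which is arbitrary. For the order $\limsup_{N\to\infty}\limsup_{n\to\infty}$, fix $N\ge N_0$: the uniformity in $n$ that Theorem \ref{pro:unif-conv-theoretical}(i) provides gives, for every $K\ge n_0$, the inequality $\sup_{n\ge K}\sup_{\Vert f\Vert_\infty\le1}\E(|\langle m_n^N-m_n,f\rangle|+\Vert\eta_n^N-\eta_n\Vert_{TV})\le c_0\delta$ uniformly in $N\ge N_0$; combining with $C(n):=\Vert m_n-m_\infty\Vert_{TV}+\Vert\eta_n-\eta_\infty\Vert_{TV}\to 0$, the inner $\limsup_n$ is $\le c_0\delta$ for every $N\ge N_0$, so the outer $\limsup_N$ inherits the same bound, and arbitrariness of $\delta$ concludes.

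The two essential technical inputs are packaged in the preceding theorems: the uniform-in-$n$ particle approximation from Theorem \ref{pro:unif-conv-theoretical}(i) and the Banach contraction structure used to prove Theorem \ref{cor:unique-fixed-point}. Given those, this corollary is genuinely ``immediate'', and the only point requiring care is respecting the two orders of limits; the critical feature on which both orders hinge is that the error bound from Theorem \ref{pro:unif-conv-theoretical}(i) holds \emph{uniformly in $n\ge n_0$ for $N\ge N_0$}, which is exactly what the statement provides. I therefore do not anticipate a substantive obstacle beyond this bookkeeping.
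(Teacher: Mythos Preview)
Your proposal is correct and follows essentially the same route as the paper: split via the triangle inequality into the stochastic error controlled by Theorem \ref{pro:unif-conv-theoretical}(i) and the deterministic error $\Vert(m_n,\eta_n)-(m_\infty,\eta_\infty)\Vert$, which vanishes by the argument behind Theorem \ref{cor:unique-fixed-point}. One small caveat: the paper does not show that $\Phi$ itself is a strict contraction in the Banach sense, but rather (Lemma \ref{lem:Phi-contracting}) that $\Vert\Phi^n(m_0,\eta_0)-\Phi^n(m_0',\eta_0')\Vert\le 4\theta^{n-1}$ and hence that $(m_n,\eta_n)$ is Cauchy; this still gives the convergence you need, so the argument goes through unchanged.
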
 Simulation of the stochastic system in \eqref{eq:def-theoretical-model}
or numerical computation of the paths of the deterministic dynamical
system \eqref{eq:rec01} can in general be quite hard and so it is
of interest to develop good approximation schemes. One flexible and
appealing approach is to approximate the measures, $\eta_{n}^{N}$
in the first case and the measures $(\eta_{n},m_{n})$ in the second
case, by discrete probability distributions through a collection of
evolving particles. We will consider one such particle scheme in this
work and show that, under conditions, the error between the dynamical
system in \eqref{eq:rec01} and the one obtained through a particle
approximation can be controlled uniformly in time. We will also prove
a similar result for the error between the actual physical stochastic
system and the one obtained through particle approximations of $\eta_{n}^{N}$.
For these results we will make the following additional assumption.
\begin{assump}\label{assump:gaussian} $P,P'$ are Gaussian kernels
and $\ell$ is the Lebesgue measure on $\R^{d}$. \end{assump} Note
that if $P,P'$ satisfy Assumption \ref{assump:gaussian}, then they
also satisfy Assumptions \ref{assu:forthm1}, \ref{assump:P} and
\ref{leftover} (for Assumption \ref{assump:P} see Lemma \ref{lem:consistancy-of-assumptions}).

We propose the following particle system for the system \eqref{eq:rec01}
that starts from $(m_{0},\eta_{0})\in\clp(E)\times\clp^{*}(\R^{d})$.
We assume that we can draw samples from $(m_{0},\eta_{0})$. We remark
that although for Gaussian kernels, the integral in \eqref{PFiel}
can be computed analytically as a mixture of Gaussian densities,
the computation becomes numerically unfeasible since the number of
terms in this mixture grows linearly over time.

Denote by $(\tilde{X}_{1}^{N}(0),\cdots\tilde{X}_{N}^{N}(0))$ a sample
of size $N$ from $m_{0}$. Let $\tilde{m}_{0}^{N}=\Pi^{N}(\tilde{X}_{1}^{N}(0),\cdots\tilde{X}_{N}^{N}(0))$.
The approximating particle system is given as a family $(\ti X^{N}(k),m_{k}^{N},\eta_{k}^{N})_{k\in\N_{0}}$
of $E^{N}\times\clp(E)\times\clp^{*}(\R^{d})$ valued random variables
on some probability space $(\Omega,\clf,\p)$, defined recursively
as follows. Set $\ti X^{N}(0)=(\tilde{X}_{1}^{N}(0),\cdots\tilde{X}_{N}^{N}(0))$,
$\ti\eta_{0}^{N}=\eta_{0}$, $\clf_{0}=\sigma(\tilde{X}^{N}(0))$.
For $k\ge1$ \begin{equation}
\begin{cases}
\p(\ti X^{N}(k)\in A\mid\clf_{k-1})=\bigotimes_{j=1}^{N}\left(\delta_{\ti X_{j}^{N}(k-1)}M^{\ti\eta_{k-1}^{N}}\right)(A),\; A\in\clb(E^{N})\\
\ti m_{k}^{N}=\frac{1}{N}\sum_{j=1}^{N}\delta_{\ti X_{j}^{N}(k)}\\
\ti\eta_{k}^{N}=(1-\epsilon)(S^{N}(\ti\eta_{k-1}^{N})P)+\epsilon(\ti m_{k-1}^{N}P')\\
\mathcal{F}_{k}=\sigma(\tilde{X}^{N}(k),\tilde{\eta}_{k}^{N})\vee\clf_{k-1}\,.\end{cases}\label{eq:def-scheme}\end{equation}
 Here $S^{N}(\tilde{\eta}_{k-1}^{N})$ is the random probability measure
defined as $\frac{1}{N}\sum_{i=1}^{N}\delta_{Y_{i}^{N}(k)}$ where
$Y_{1}^{N}(k),\cdots Y_{N}^{N}(k)$, conditionally on $\clf_{k-1}$,
are i.i.d. distributed according to $\tilde{\eta}_{k-1}^{N}$.
%(samples for different $k$'s are independent)
%\footnote{do we need to make the last statement-- I think it is already captured in the definition of $\clf_k$}.

Notice that under the above Gaussian assumption, $\ti\eta_{k}^{N}$
is a mixture of $2N$- Gaussian random variables for any $k\geq1$,
so we can compute numerically its density at any point. So (\ref{eq:def-scheme})
defines an implementable particle scheme. %We use the same notation $m_{k}^{N},\eta_{k}^{N}$ as in Section
%\ref{sec:Convergence-of-the-theoretical} but for different systems.
We first consider convergence over a fixed time horizon.

\begin{thm} \label{particlefin} Under Assumption \ref{assump:gaussian}, we have for all $k$, \begin{equation}
\sup_{f\in\clb_{1}(E)}\E(|\langle\ti m_{k}^{N}-m_{k},f\rangle|+\Vert\ti\eta_{k}^{N}-\eta_{k}\Vert_{TV}+\Vert\ti\eta_{k}^{N}-\eta_{k}\Vert_{\infty})\convN0\,.\label{eq:conv-particle-scheme}\end{equation}
 \end{thm}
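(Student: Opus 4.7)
The approach is induction on $k$, tracking the three error quantities
\[
a_N(k) := \sup_{f\in\clb_1(E)}\E|\langle\ti m_k^N-m_k,f\rangle|,\quad b_N(k):=\E\Vert\ti\eta_k^N-\eta_k\Vert_{TV},\quad c_N(k):=\E\Vert\ti\eta_k^N-\eta_k\Vert_\infty.
\]
The base case $k=0$ is immediate: $\ti\eta_0^N=\eta_0$ gives $b_N(0)=c_N(0)=0$, while the i.i.d.\ sampling of $\ti X_i^N(0)$ from $m_0$ on the compact set $E$ together with a conditional variance estimate gives $a_N(0)\le 1/\sqrt N$. The inductive step will show that $a_N(k-1)+c_N(k-1)\to 0$ forces $a_N(k)+c_N(k)\to 0$; the convergence $b_N(k)\to 0$ will then follow automatically.

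To bound $a_N(k)$, I decompose
\[
\ti m_k^N-m_k=\bigl(\ti m_k^N-\ti m_{k-1}^N M^{\ti\eta_{k-1}^N}\bigr)+\bigl(\ti m_{k-1}^N-m_{k-1}\bigr)M^{\eta_{k-1}}+\ti m_{k-1}^N\bigl(M^{\ti\eta_{k-1}^N}-M^{\eta_{k-1}}\bigr).
\]
Given $\clf_{k-1}$, the particles $\ti X_j^N(k)$ are independent with common mean measure $\ti m_{k-1}^N M^{\ti\eta_{k-1}^N}$, so testing against $f\in\clb_1(E)$ and bounding the conditional variance yields an $O(1/\sqrt N)$ bound on the first piece. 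The second piece contributes at most $a_N(k-1)$, since $M^{\eta_{k-1}}f$ is deterministic and lies in $\clb_1(E)$. For the third piece I use $\Vert M^\Psi(x,\cdot)-M^{\Psi'}(x,\cdot)\Vert_{TV}\le 4\lambda\Vert\Psi-\Psi'\Vert_\infty$, which follows directly from the definition of $M^\Psi$ and the $\lambda$-Lipschitz continuity of $t\mapsto e^{-\lambda t_+}$; this bounds its contribution by $4\lambda c_N(k-1)$.

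The main technical step is the bound on $c_N(k)$. The recursions give
\[
\ti\eta_k^N-\eta_k=(1-\epsilon)\bigl[(S^N(\ti\eta_{k-1}^N)-\ti\eta_{k-1}^N)P+(\ti\eta_{k-1}^N-\eta_{k-1})P\bigr]+\epsilon(\ti m_{k-1}^N-m_{k-1})P'.
\]
The middle term satisfies $\Vert(\ti\eta_{k-1}^N-\eta_{k-1})P\Vert_\infty\le 2M_P\Vert\ti\eta_{k-1}^N-\eta_{k-1}\Vert_{TV}\le 2M_P b_N(k-1)$. The other two terms both reduce to controlling $\E\sup_{y\in\R^d}|\langle\nu_N,K(\cdot,y)\rangle|$ for $K\in\{P,P'\}$. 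Pointwise the bounds are easy: since $P(\cdot,y)/M_P,P'(\cdot,y)/M_{P'}\in\clb_1$, the conditional variance yields $\E|\langle S^N(\ti\eta_{k-1}^N)-\ti\eta_{k-1}^N,P(\cdot,y)\rangle|\le M_P/\sqrt N$, and the inductive hypothesis gives $\E|\langle\ti m_{k-1}^N-m_{k-1},P'(\cdot,y)\rangle|\le M_{P'}a_N(k-1)$. To lift the supremum over $y$ inside the expectation, I fix $\delta,R>0$, cover $\{|y|\le R\}$ by a $\delta$-net of fixed cardinality $N_{R,\delta}$, bound the supremum on the net by a sum of pointwise bounds, absorb the interior error using the uniform-in-$x$ Lipschitz continuity of $P(x,\cdot)$ and $P'(x,\cdot)$ (which holds for Gaussian kernels), and handle the tail $\{|y|>R\}$ through Gaussian decay: for $P'$, compactness of $E$ yields $\sup_{x\in E}P'(x,y)\to 0$ as $|y|\to\infty$, while for $P$ a dominated-convergence argument exploits that $\ti\eta_{k-1}^N P$ and $\eta_{k-1}P$ are probability densities on $\R^d$. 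Choosing $\delta$ small, then $R$ large, then $N$ large closes the step.

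Finally, $b_N(k)\to 0$ follows from $c_N(k)\to 0$ via a Scheffe identity: since $\ti\eta_k^N$ and $\eta_k$ both have total mass one,
\[
\Vert\ti\eta_k^N-\eta_k\Vert_{TV}=2\int_{\R^d}(\eta_k-\ti\eta_k^N)^+(y)\,\ell(dy),
\]
with integrand dominated by $\eta_k\in L^1(\ell)$ and satisfying $\E(\eta_k(y)-\ti\eta_k^N(y))^+\le c_N(k)\to 0$ pointwise in $y$; Fubini and dominated convergence deliver $b_N(k)\to 0$. The chief obstacle is the uniform-in-$y\in\R^d$ bound underlying $c_N(k)$: both the equicontinuity step on compact sets and the tail estimate at infinity lean squarely on the Gaussian structure of Assumption \ref{assump:gaussian}, and that is precisely where the argument is specialized to Gaussian $P,P'$.
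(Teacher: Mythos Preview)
Your overall strategy matches the paper's: induction on $k$, the same three-term decomposition for $\ti m_k^N-m_k$, and Scheffe's theorem to pass from $\Vert\cdot\Vert_\infty$ to $\Vert\cdot\Vert_{TV}$. The treatment of the $P'$-term and of the middle term $(\ti\eta_{k-1}^N-\eta_{k-1})P$ is fine. There is, however, a real gap in the tail argument for the sampling-error term $(S^N(\ti\eta_{k-1}^N)-\ti\eta_{k-1}^N)P$. Your claim that the region $\{|y|>R\}$ can be handled by ``dominated convergence'' because $\ti\eta_{k-1}^N P$ and $\eta_{k-1}P$ are probability densities does not work: knowing that a density integrates to $1$ gives no control of its supremum on $\{|y|>R\}$. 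Concretely, both $(S^N(\ti\eta_{k-1}^N)P)(y)$ and $(\ti\eta_{k-1}^N P)(y)$ can be of order one at arbitrarily large $|y|$ unless the mass of $\ti\eta_{k-1}^N$ (and of its empirical sample) is confined to a bounded region, and nothing you have written establishes that. What is actually needed is a tightness statement: for any $\delta>0$ there is a compact $K\subset\R^d$ with $\E[\ti\eta_{k-1}^N(K^c)]<\delta$ for all (large) $N$. You could extract this, for fixed $k$, from the inductive hypothesis $b_N(k-1)\to 0$ together with the tightness of the single measure $\eta_{k-1}$, but that step has to be stated and used; as written, the tail bound is unjustified.

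The paper handles this point differently and it is worth noting the contrast. Rather than taking a $\delta$-net in the $y$-variable, the paper first proves a dedicated tightness lemma (Lemma~\ref{lem:tightness-02}) showing $\E[\ti\eta_k^N(K(\delta)^c)]<\delta$ uniformly in $k$ and $N$, and then restricts the \emph{$x$-variable} to $K(\delta)$. On that compact set, the family $\{P(\cdot,y)|_{K(\delta)}:y\in\R^d\}$ is uniformly bounded and equi-Lipschitz in $x$, so an Arzel\`a--Ascoli argument (Lemma~\ref{rem:ascoli}) reduces the supremum over $y$ to a finite maximum over test functions. Your approach is the dual one (Lipschitz in $y$ and a net in $y$-space), which is perfectly viable, but it requires exactly the same tightness input to kill the tail --- and that input is the missing ingredient in your writeup.
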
 We now consider uniform in time convergence of the approximation
scheme. \begin{thm} \label{particleinf} Suppose that Assumptions
\ref{assump:Q}, \ref{assump:lipQ} and \ref{assump:gaussian} hold.
Then there exists $c_{1}\in(0,\infty)$ such that for $\eps\le\eps_{0}$,
$\lambda\le\lambda_{0}$ and $(\eta_{0},m_{0})\in\clp^{*}(\R^{d})\times\clp(E)$,
the sequences defined in (\ref{eq:def-scheme}) are such that \[
\forall\delta>0\,,\,\exists N_{0}\mbox{ such that }\forall n\mbox{ and }N\ge N_{0}\,,\,\sup_{f\in\clb_{1}(E)}\E(|\langle\ti m_{n}^{N}-m_{n},f\rangle|+\Vert\ti\eta_{n}^{N}-\eta_{n}\Vert_{TV})<c_{1}\delta\,.\]

\label{physsyst} \end{thm}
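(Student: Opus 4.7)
The plan is to adapt the uniform-in-time argument developed for Theorem~\ref{pro:unif-conv-theoretical} to the approximating scheme~\eqref{eq:def-scheme}. The two engines are (i) the contraction of $\Phi$ on $\clp(E)\times\clp^{*}(\R^d)$ that the proof of Theorem~\ref{cor:unique-fixed-point} provides for $\eps\le\eps_{0}$ and $\lambda\le\lambda_{0}$, say with rate $\rho<1$ in the norm $\|\cdot\|$ introduced before Assumption~\ref{assu:forthm1}, and (ii) a one-step Monte Carlo bound that, uniformly in the time index, controls the noise injected at each iteration of the scheme. Assumptions~\ref{assump:Q} and~\ref{assump:lipQ} enter only through the proof of the contraction of $\Phi$.

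Set $\ti\mu_{k}^{N}=(\ti m_{k}^{N},\ti\eta_{k}^{N})$, $\mu_{k}=(m_{k},\eta_{k})$, and introduce the ``noise-free one step'' measure $\hat\mu_{k}^{N}=\Phi(\ti\mu_{k-1}^{N})$. The triangle inequality followed by the contraction of $\Phi$ yields
\[
\|\ti\mu_{k}^{N}-\mu_{k}\|\,\le\,\|\ti\mu_{k}^{N}-\hat\mu_{k}^{N}\|+\rho\,\|\ti\mu_{k-1}^{N}-\mu_{k-1}\|,
\]
so that, upon iterating and taking expectations,
\[
\E\|\ti\mu_{n}^{N}-\mu_{n}\|\,\le\,\rho^{n}\,\E\|\ti\mu_{0}^{N}-\mu_{0}\|+\sum_{j=1}^{n}\rho^{n-j}\,\E\|\ti\mu_{j}^{N}-\hat\mu_{j}^{N}\|.
\]
If each summand $\E\|\ti\mu_{j}^{N}-\hat\mu_{j}^{N}\|$ can be bounded by a quantity $\varepsilon(N)\to 0$ independent of $j$, the geometric sum gives $\E\|\ti\mu_{n}^{N}-\mu_{n}\|\le \rho^{n}\E\|\ti\mu_{0}^{N}-\mu_{0}\|+\varepsilon(N)/(1-\rho)$ uniformly in $n$, which is the announced bound.

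The one-step fluctuation $\ti\mu_{j}^{N}-\hat\mu_{j}^{N}$ decouples into two pieces. The $m$-component $\ti m_{j}^{N}-\ti m_{j-1}^{N}M^{\ti\eta_{j-1}^{N}}$ is, conditionally on $\clf_{j-1}$, the centred empirical measure of $N$ iid draws from a probability on the compact set $E$, and a standard $L^{2}$ computation gives $\sup_{f\in\clb_{1}(E)}\E|\langle\ti m_{j}^{N}-\ti m_{j-1}^{N}M^{\ti\eta_{j-1}^{N}},f\rangle|\le 1/\sqrt{N}$ uniformly in $j$. The $\eta$-component is the genuinely new ingredient:
\[
\ti\eta_{j}^{N}-\ti\eta_{j-1}^{N}R_{\ti m_{j-1}^{N}}=(1-\eps)\bigl(S^{N}(\ti\eta_{j-1}^{N})-\ti\eta_{j-1}^{N}\bigr)P.
\]
Writing the Gaussian kernel in translation form $P(x,y)=g(y-x)$ with $\|g\|_{L^{2}(\R^d)}<\infty$ (Assumption~\ref{assump:gaussian}), the Fourier transform of $(S^{N}(\eta)-\eta)P$ factors as $\hat{g}(\xi)(\widehat{S^{N}(\eta)}-\hat\eta)(\xi)$, and $\E|(\widehat{S^{N}(\eta)}-\hat\eta)(\xi)|^{2}\le 1/N$ for every $\xi$, so Plancherel yields $\E\|(S^{N}(\eta)-\eta)P\|_{L^{2}(\R^d)}^{2}\le\|g\|_{L^{2}}^{2}/N$. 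A total-variation bound then follows by splitting $\int_{\R^d}=\int_{|y|\le R_N}+\int_{|y|>R_N}$, estimating the first piece by Cauchy--Schwarz against the $L^{2}$-bound and the second piece by Markov's inequality in terms of the second moment of $\ti\eta_{j-1}^{N}$. Optimising $R_N$ in $N$ produces
\[
\E\|(S^{N}(\eta)-\eta)P\|_{TV}\,\le\, C\bigl(1+\textstyle\int|x|^{2}\eta(dx)\bigr)\,N^{-\alpha}
\]
for some explicit $\alpha>0$ depending only on the dimension $d$.

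The main obstacle is therefore to propagate a uniform-in-$j$ second moment bound on $\ti\eta_{j}^{N}$. This is where Assumption~\ref{assump:gaussian} plays an essential role: thanks to the explicit Gaussian mixture structure of $\ti\eta_{j}^{N}$, conditional on $\clf_{j-1}$ one has
\[
\E\Bigl[\int|y|^{2}\,\ti\eta_{j}^{N}(dy)\,\Big|\,\clf_{j-1}\Bigr]=(1-\eps)\Bigl(\int|y|^{2}\,\ti\eta_{j-1}^{N}(dy)+d\sigma_{P}^{2}\Bigr)+\eps\int|y|^{2}\,(\ti m_{j-1}^{N}P')(dy),
\]
and since $\ti m_{j-1}^{N}$ is supported in the compact set $E$ the last term is bounded by a constant $K$. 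Setting $M_{j}=\E\int|y|^{2}\,\ti\eta_{j}^{N}(dy)$, this delivers the contractive recursion $M_{j}\le(1-\eps)M_{j-1}+C$ with $C=(1-\eps)d\sigma_{P}^{2}+\eps K$, whence $\sup_{j}M_{j}\le M_{0}+C/\eps$ whenever $M_{0}<\infty$. Combined with the previous paragraph this gives $\E\|\ti\mu_{j}^{N}-\hat\mu_{j}^{N}\|\le\varepsilon(N)\to 0$ uniformly in $j$, and the iteration argument of the second paragraph then completes the proof, with a single $N_{0}=N_{0}(\delta)$ working for all $n\in\N$.
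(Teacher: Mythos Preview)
Your argument hinges on the claim that $\Phi$ is a one-step contraction in the total-variation based norm $\|\cdot\|$, i.e.\ $\|\Phi(\mu)-\Phi(\mu')\|\le\rho\,\|\mu-\mu'\|$ for some $\rho<1$. The proof of Theorem~\ref{cor:unique-fixed-point} does \emph{not} establish this, and it is in fact the core difficulty. From \eqref{eq:rec-maj-04} one only gets
\[
\|mM^{\eta}-mM^{\eta'}\|_{TV}\le 4\lambda\,\|\eta-\eta'\|_{\infty},
\]
because $M^{\eta}$ depends on the \emph{pointwise values} of the density $\eta$. There is no way to dominate $\|\eta-\eta'\|_{\infty}$ by $\|\eta-\eta'\|_{TV}$ for arbitrary $\eta,\eta'\in\clp^{*}(\R^{d})$, so a one-step TV contraction is not available. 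This is exactly why the paper's Lemma~\ref{lem:Phi-contracting} and Corollary~\ref{firsttwo} give a \emph{two-step} recursion $\alpha_{k}+\kappa\alpha_{k-1}\le\theta(\alpha_{k-1}+\kappa\alpha_{k-2})$: one applies $\Phi$ once so that $\eta_{k-1}-\eta'_{k-1}$ acquires a density whose $L^{\infty}$ norm can be controlled by the TV distance of the inputs at time $k-2$ (via \eqref{eq:maj-densite-01}--\eqref{eq:maj-densite-02}). Moreover, Corollary~\ref{firsttwo} requires the initial $m$-measure to have a bounded density, which is why the paper first applies $\Phi$ to the empirical local error before invoking the contraction. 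Your iteration $\|\ti\mu_{k}^{N}-\mu_{k}\|\le\|\ti\mu_{k}^{N}-\hat\mu_{k}^{N}\|+\rho\|\ti\mu_{k-1}^{N}-\mu_{k-1}\|$ therefore lacks justification, and the whole geometric-sum argument built on it does not go through.

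A secondary issue: your TV bound on $(S^{N}(\eta)-\eta)P$ involves $\int|x|^{2}\eta(dx)$, and you propagate $M_{j}=\E\int|y|^{2}\ti\eta_{j}^{N}(dy)$ via a recursion that needs $M_{0}<\infty$. But the theorem assumes only $\eta_{0}\in\clp^{*}(\R^{d})$, with no moment condition; if $\int|x|^{2}\eta_{0}(dx)=\infty$ then $M_{j}=\infty$ for every $j$ (the contribution $(1-\eps)^{j}\eta_{0}(S^{N}\!\circ P)^{j}$ never has finite expected second moment). The paper avoids this by proving a uniform \emph{tightness} statement (Lemma~\ref{lem:tightness-02}) and then using the Ascoli--Arzel\`a compactness device of Lemma~\ref{rem:ascoli} on a fixed compact $K(\delta)$, which requires no moments at all. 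In short, the paper's proof uses the telescopic decomposition \eqref{eq:telescopic-sum-01-1}, bounds each local error $\Phi\circ\overline{\Phi}_{1:k}^{N}-\Phi^{(2)}\circ\overline{\Phi}_{1:k-1}^{N}$ in TV via Lipschitz/compactness arguments (this is where Assumption~\ref{assump:lipQ} enters, through Lemma~\ref{lem:lip-convolution-02}, not only through the contraction), and then applies the two-step contraction of Corollary~\ref{firsttwo}; you would need to follow that route rather than the one-step iteration you propose.
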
 %
\begin{comment}
We now introduce a theoretical system of particles, starting from
$(m_{0},\eta_{0})$. We set $\overline{X}^{N}(0)=(\overline{X}_{1}^{N}(0),\dots,\overline{X}_{N}^{N}(0))$
with $\overline{X}_{1}^{N}(0),\dots,\overline{X}_{N}^{N}(0)$ i.i.d.
of law $m_{0}$. We set $\overline{\mathcal{F}}_{0}=\sigma(\overline{X}^{N}(0))$,
$\overline{\eta}_{0}=\eta_{0}$. For $k\geq1$,\[
\begin{cases}
\p(\overline{X}^{N}(k)\in A\mid\clf_{k-1})=\bigotimes_{j=1}^{N}\left(\delta_{\overline{X}^{N}(k-1)}M^{\overline{\eta}_{k-1}^{N}}\right)(A),\; A\in\clb(E^{N})\\
\overline{m}_{k}^{N}=\frac{1}{N}\sum_{j=1}^{N}\delta_{\overline{X}_{j}^{N}(k)}\\
\overline{\eta}_{k}^{N}=(1-\epsilon)(\overline{\eta}_{k-1}^{N}P)+\epsilon(\overline{m}_{k-1}^{N}P')\\
\overline{\clf}_{k}=\sigma(\overline{X}^{N}(k),\overline{\eta}_{k}^{N})\,.\end{cases}\]
 %
\end{comment}
{}As corollaries of Theorems \ref{lem:convergence-non-uniforme}, \ref{pro:unif-conv-theoretical},
\ref{particlefin} and \ref{particleinf} we have the following results.
\begin{thm} \label{thm:dist-non-uniform} Under Assumption \ref{assump:Q} and assumptions of Theorem
\ref{lem:convergence-non-uniforme}, we have for all $k\geq0,$ \[
\sup_{f\in\mathcal{B}_{1}(E)}\E(|\langle m_{k}^{N}-\tilde{m}_{k}^{N},f\rangle|+\Vert\eta_{k}^{N}-\tilde{\eta}_{k}^{N}\Vert_{TV})\convN0\,.\]

\end{thm}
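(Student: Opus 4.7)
The statement is a direct corollary of Theorems \ref{lem:convergence-non-uniforme} and \ref{particlefin}: I would route both $(m_k^N,\eta_k^N)$ and $(\tilde m_k^N,\tilde\eta_k^N)$ through the common deterministic limit $(m_k,\eta_k)$ of the nonlinear recursion \eqref{eq:rec01}, using a triangle inequality in total variation. Concretely, for $f\in\clb_1(E)$,
\[
|\langle m_k^N-\tilde m_k^N,f\rangle|\le|\langle m_k^N-m_k,f\rangle|+|\langle \tilde m_k^N-m_k,f\rangle|,
\]
and
\[
\Vert\eta_k^N-\tilde\eta_k^N\Vert_{TV}\le\Vert\eta_k^N-\eta_k\Vert_{TV}+\Vert\tilde\eta_k^N-\eta_k\Vert_{TV}.
\]
Taking expectations and $\sup_{f\in\clb_1(E)}$, it then suffices to show that each of the four right-hand terms tends to $0$.

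For the first pair, Theorem \ref{lem:convergence-non-uniforme} (whose hypotheses are part of the present assumptions) yields $\sup_{f\in\clb_1(E)}\E|\langle m_k^N-m_k,f\rangle|\to0$ and $\E\Vert\eta_k^N-\eta_k\Vert_{\infty}\to0$; the Scheffé-type remark recorded right after Theorem \ref{lem:convergence-non-uniforme} upgrades the latter to $\E\Vert\eta_k^N-\eta_k\Vert_{TV}\to0$, which is the object we actually need. For the second pair, I would note first that the particle scheme \eqref{eq:def-scheme} is initialized by $\tilde\eta_0^N=\eta_0$ (exactly) and $\tilde m_0^N=\Pi^N(\tilde X_1^N(0),\dots,\tilde X_N^N(0))$ with i.i.d.\ samples from $m_0$, so that $\sup_{f\in\clb_1(E)}\E|\langle\tilde m_0^N-m_0,f\rangle|\to0$ by the standard Monte-Carlo estimate $\E|\langle\tilde m_0^N-m_0,f\rangle|\le N^{-1/2}$. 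Theorem \ref{particlefin} then gives $\sup_{f\in\clb_1(E)}\E|\langle\tilde m_k^N-m_k,f\rangle|\to0$ and $\E\Vert\tilde\eta_k^N-\eta_k\Vert_{TV}\to0$. Summing the triangle inequalities above and invoking these four bounds yields the claim.

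The proof requires essentially no new computation; the only issue I expect is bookkeeping of hypotheses, since Theorem \ref{particlefin} is stated under Assumption \ref{assump:gaussian} (needed so that $\tilde\eta_k^N$ is a tractable Gaussian mixture and $S^N(\tilde\eta_{k-1}^N)P$ has a bounded density), while the statement of Theorem \ref{thm:dist-non-uniform} mentions only Assumption \ref{assump:Q} together with the hypotheses of Theorem \ref{lem:convergence-non-uniforme}. I would take the Gaussian assumption to be implicit (the particle scheme is defined only in that setting) and note that Assumption \ref{assump:Q} does not enter the triangle-inequality argument beyond ensuring that the kernels $M^{\eta}$ are well defined for both systems; no contraction of $Q$ is actually used at a fixed horizon $k$. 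If one prefers an assumption-free reading, the same argument works verbatim provided the conclusions of Theorems \ref{lem:convergence-non-uniforme} and \ref{particlefin} are both available, which is the real content of the hypotheses. There is no genuine analytic obstacle in the proof.
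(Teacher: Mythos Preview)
Your proposal is correct and is precisely the approach the paper intends: Theorem \ref{thm:dist-non-uniform} is stated as an immediate corollary of Theorems \ref{lem:convergence-non-uniforme} and \ref{particlefin}, obtained by routing both systems through the deterministic limit $(m_k,\eta_k)$ via the triangle inequality, exactly as you describe. Your remark on the implicit role of Assumption \ref{assump:gaussian} (needed for Theorem \ref{particlefin} and hence for the $(\tilde m_k^N,\tilde\eta_k^N)$ half of the argument) is also on point.
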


\begin{thm} \label{thm:dist-uniform} Under assumptions \ref{assump:Q},
\ref{assump:lipQ} and \ref{assump:gaussian} % and those of Theorem \ref{particleinf},
there exists $c_{1}\in(0,\infty)$ such that for $\eps\le\eps_{0}$,
$\lambda\le\lambda_{0}$, \[
\forall\delta>0\,,\,\exists N_{0},n_{0}\in\N\mbox{ such that }\forall n\ge n_{0}\mbox{ and }N\ge N_{0}\,,\,\sup_{\Vert f\Vert_{\infty}\leq1}\E(|\langle m_{n}-\tilde{m}_{n}^{N},f\rangle|+\Vert\eta_{n}^{N}-\tilde{\eta}_{n}^{N}\Vert_{TV})<c_{1}\delta\,.\]
\end{thm}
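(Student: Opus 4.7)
The result is a straightforward combination of the two uniform-in-time estimates already available, namely Theorem \ref{pro:unif-conv-theoretical} (comparing the physical stochastic system to the deterministic limit) and Theorem \ref{particleinf} (comparing the particle scheme to the deterministic limit), joined by a triangle inequality through the nonlinear deterministic dynamics $(m_n,\eta_n)$.

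First I would check that all the hypotheses allow both theorems to be invoked simultaneously. By Lemma \ref{lem:consistancy-of-assumptions} together with the standard Gaussian estimates, Assumption \ref{assump:gaussian} entails Assumptions \ref{assu:forthm1}, \ref{assump:P} and \ref{leftover}; combined with the hypothesized \ref{assump:Q} and \ref{assump:lipQ}, all the assumptions of both Theorem \ref{pro:unif-conv-theoretical} and Theorem \ref{particleinf} are in force, with the common constants $\eps_0,\lambda_0$ of Theorem \ref{cor:unique-fixed-point}.

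Next, for every $f\in\clb_1(E)$ and every $n\in\N_0$, I would insert $(m_n,\eta_n)$ and write
\[
|\langle m_n^N-\ti m_n^N,f\rangle|+\|\eta_n^N-\ti\eta_n^N\|_{TV}\le\bigl(|\langle m_n^N-m_n,f\rangle|+\|\eta_n^N-\eta_n\|_{TV}\bigr)+\bigl(|\langle m_n-\ti m_n^N,f\rangle|+\|\eta_n-\ti\eta_n^N\|_{TV}\bigr).
\]
Taking expectation and supremum over $f\in\clb_1(E)$, I would apply Theorem \ref{pro:unif-conv-theoretical}(i) with tolerance $\delta/(2c_0)$ to the first parenthesis: there exist $n_0,N_0^{(1)}\in\N$ such that for all $n\ge n_0$ and $N\ge N_0^{(1)}$ the corresponding expectation is at most $\delta/2$. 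Uniformly in $n$, Theorem \ref{particleinf} applied with tolerance $\delta/(2c_1)$ furnishes $N_0^{(2)}\in\N$ such that for all $N\ge N_0^{(2)}$ the second parenthesis has expectation at most $\delta/2$. Setting $N_0=N_0^{(1)}\vee N_0^{(2)}$ and redefining the constant yields the claimed bound.

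There is essentially no genuine obstacle beyond this bookkeeping, since the entire analytic difficulty has been absorbed into Theorems \ref{pro:unif-conv-theoretical} and \ref{particleinf}. The one point deserving care is the matching of initial data: the deterministic dynamics $(m_n,\eta_n)$ and the scheme $(\ti m_n^N,\ti\eta_n^N)$ are both built from $(m_0,\eta_0)$ by construction of \eqref{eq:def-scheme}, so Theorem \ref{particleinf} applies verbatim; the physical system $(m_n^N,\eta_n^N)$ starts from an arbitrary $(\eta_0^N,X_1,\ldots,X_N)$, which is precisely the setting of Theorem \ref{pro:unif-conv-theoretical}(i), whose bound only kicks in once the waiting time $n_0$ has elapsed and the arbitrary initialization has been forgotten by contraction. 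This is exactly why the conclusion of Theorem \ref{thm:dist-uniform} is uniform only for $n\ge n_0$, in contrast with Theorem \ref{particleinf} which is uniform in all $n$.
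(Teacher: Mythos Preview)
Your proposal is correct and matches the paper's approach: the paper simply presents Theorem \ref{thm:dist-uniform} as an immediate corollary of Theorems \ref{pro:unif-conv-theoretical} and \ref{particleinf} (together with the observation that Assumption \ref{assump:gaussian} implies Assumptions \ref{assu:forthm1}, \ref{assump:P} and \ref{leftover}), without writing out any further argument. Your triangle-inequality-through-$(m_n,\eta_n)$ bookkeeping and your explanation of why the waiting time $n_0$ is inherited from Theorem \ref{pro:unif-conv-theoretical}(i) are exactly the intended content.
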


We remark that in approximating $\eta_{k}$ by $\tilde{\eta}_{k}^{N}$
and $m_{k}$ by $\tilde{m}_{k}^{N}$ we have taken for simplicity
$N$ particles for both approximations. Although not pursued here,
one can similarly analyse particle schemes where the number of particles
for approximating $\eta_{k}$ is different from that used for approximating
$m_{k}$.

\section{Proofs.}

%\begin{notation}
%In the following, $C$ will be a constant depending on the parameters
%of the problem, which might change from line to line.
%\end{notation}

\subsection{Convergence over a finite time horizon.\label{sec:Convergence-of-the-theoretical}}

In this subsection we prove Theorem \ref{lem:convergence-non-uniforme}
and Corollary \ref{propchaos}. We begin with a lemma that will be
used several times in this work. Proof is immediate from Ascoli-Arzela
theorem. \begin{lem} \label{rem:ascoli} Let $K$ be a compact subset
of $\R^{d}$ and let for $a,b\in(0,\infty)$, $F_{a,b}(K)$ be the
collection of all functions $f:K\rightarrow\R$ such that $\Vert f\Vert_{\infty}\leq a$
and $f$ is $b$-Lipschitz. Then for every $\delta>0$ there exists
a finite subset $F_{a,b}^{\delta}(K)$ of $F_{a,b}(K)$ such that
for every signed measure $\mu$ on $K$ \[
\sup_{f\in F_{a,b}(K)}|\langle\mu,f\rangle|)\leq\max_{g\in F_{a,b}^{\delta}(K)}|\langle\mu,g\rangle|+\delta\Vert\mu\Vert_{TV}\,.\]
 \end{lem}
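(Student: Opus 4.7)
The plan is to use Arzelà--Ascoli to produce a finite $\delta$-net of $F_{a,b}(K)$ in the sup norm and then estimate $\langle \mu, f\rangle$ by the nearest net element, paying an error controlled by $\|\mu\|_{TV}$.

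First, I would observe that $F_{a,b}(K)$ is uniformly bounded (by $a$) and equicontinuous (since every $f$ has the same Lipschitz constant $b$). Because $K$ is compact, the Arzelà--Ascoli theorem implies that $F_{a,b}(K)$ is relatively compact in $C(K)$ endowed with the uniform norm. In particular, for any fixed $\delta>0$, the set $F_{a,b}(K)$ is totally bounded, so there exists a finite subset $F_{a,b}^{\delta}(K)\subset F_{a,b}(K)$ that is a $\delta$-net, meaning that
\[
\forall f\in F_{a,b}(K),\ \exists\, g\in F_{a,b}^{\delta}(K)\ \text{such that}\ \|f-g\|_{\infty}\le\delta.
\]

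Next, fix any signed measure $\mu$ on $K$ and any $f\in F_{a,b}(K)$, and pick such a $g\in F_{a,b}^{\delta}(K)$. Writing $f=g+(f-g)$ and using linearity of the pairing,
\[
|\langle\mu,f\rangle|\le |\langle\mu,g\rangle|+|\langle\mu,f-g\rangle|.
\]
The first term is at most $\max_{g'\in F_{a,b}^{\delta}(K)}|\langle\mu,g'\rangle|$. For the second, the defining property of the total variation norm gives $|\langle\mu,h\rangle|\le\|h\|_{\infty}\|\mu\|_{TV}$ for every bounded measurable $h$, hence $|\langle\mu,f-g\rangle|\le\delta\|\mu\|_{TV}$. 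Taking the supremum over $f\in F_{a,b}(K)$ yields the claimed inequality.

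There is essentially no obstacle here: the only subtle point is ensuring that the finite $\delta$-net can be chosen inside $F_{a,b}(K)$ itself (rather than in its closure in $C(K)$), which is immediate from total boundedness of $F_{a,b}(K)$ as a subset of the metric space $C(K)$. The lemma's conclusion does not depend on $\mu$, and the finite net depends only on $K,a,b,\delta$, which is exactly what is needed in later applications where $\mu$ will be random (e.g.\ $m_{k}^{N}-m_{k}$) and one wants to pull the supremum over a continuum of test functions outside the expectation.
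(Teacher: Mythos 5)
Your proof is correct and follows exactly the route the paper intends: the paper simply remarks that the lemma is ``immediate from Ascoli-Arzela theorem,'' and your argument (total boundedness of $F_{a,b}(K)$ in $C(K)$ giving a finite $\delta$-net inside the family, then splitting $f=g+(f-g)$ and bounding the remainder by $\delta\Vert\mu\Vert_{TV}$) is precisely the fleshed-out version of that remark.
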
 Frequently, when clear from context, we will suppress
$K$ in the notation when writing $F,F^{\delta}$. The following elementary
estimate will be used several times. \begin{lem} \label{lem:expo-lipschitz}For
all $x,y,x',y'\in\R$, $ $ \[
\left|e^{-\lambda(x-y)_{+}}-e^{-\lambda(x'-y')_{+}}\right|\leq\lambda|x-x'|+\lambda|y-y'|\,.\]
 \end{lem}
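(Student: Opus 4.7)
The plan is to decompose the estimate into two Lipschitz bounds, one for the outer exponential and one for the inner positive part. First I would note that the map $u\mapsto e^{-\lambda u}$ is $\lambda$-Lipschitz on $[0,\infty)$, because its derivative $-\lambda e^{-\lambda u}$ satisfies $|-\lambda e^{-\lambda u}|\le\lambda$ for $u\ge 0$. Applying this with $u=(x-y)_+$ and $u=(x'-y')_+$ (both nonnegative) gives
\[
\left|e^{-\lambda(x-y)_+}-e^{-\lambda(x'-y')_+}\right|\le \lambda\,\bigl|(x-y)_+-(x'-y')_+\bigr|.
\]

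Next I would use the elementary fact that $t\mapsto t_+=\max(t,0)$ is $1$-Lipschitz on $\R$ (this is immediate from $|\max(a,0)-\max(b,0)|\le|a-b|$, proved by checking the sign cases, or from the fact that $t_+=\tfrac{1}{2}(t+|t|)$). Hence
\[
\bigl|(x-y)_+-(x'-y')_+\bigr|\le \bigl|(x-y)-(x'-y')\bigr|\le |x-x'|+|y-y'|,
\]
with the last step being the triangle inequality. Chaining the two displays gives the claimed bound. There is no real obstacle here; the only point to be careful about is that the argument of the exponential is nonnegative, which is exactly why the slope $\lambda e^{-\lambda u}$ does not exceed $\lambda$ and why the cutoff through $(\cdot)_+$ does not inflate the Lipschitz constant beyond $1$.
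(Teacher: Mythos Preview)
Your proof is correct. The paper does not actually supply a proof of this lemma; it simply labels it an ``elementary estimate,'' and your two-step Lipschitz argument (first for $u\mapsto e^{-\lambda u}$ on $[0,\infty)$, then for $t\mapsto t_+$) is exactly the natural justification.
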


\begin{proof}[Proof of Theorem \ref{lem:convergence-non-uniforme}]
We proceed recursively. Note that, by assumption, \eqref{toshow}
holds for $k=0$. Suppose now that \eqref{toshow} holds for some
fixed $k\in\N_{0}$. Then \[
m_{k+1}^{N}-m_{k+1}=m_{k+1}^{N}-m_{k}^{N}M^{\eta_{k}^{N}}+m_{k}^{N}M^{\eta_{k}^{N}}-m_{k}^{N}M^{\eta_{k}}+m_{k}^{N}M^{\eta_{k}}-m_{k}M^{\eta_{k}}\,.\]
 From Lemma \ref{lem:approx-empirique} in the Appendix \[
\sup_{f\in\mathcal{B}_{1}(E)}\E(|\langle m_{k+1}^{N}-m_{k}^{N}M^{\eta_{k}^{N}},f\rangle|)\leq\frac{2}{\sqrt{N}}\convN0\,.\]
 Note that for all $f\in\clb_{1}(E)$, $m\in\clp(E)$ and $\eta,\eta'\in\clp^{*}(\R^{d})$
\begin{multline}
|mM^{\eta}(f)-mM^{\eta'}(f)|\\
=\left|\int_{x,y\in E}m(dx)Q(x,dy)(e^{-\lambda(\eta(x)-\eta(y))_{+}}-e^{-\lambda(\eta'(x)-\eta'(y))_{+}})f(y)\right.\\
+\int m(dx)Q_{0}(x,dy)\\
\times\left.\left(\int_{z\in E}Q(x,dz)(-e^{-\lambda(\eta(x)-\eta(z))_{+}}+e^{\lambda(\eta'(x)-\eta'(z))_{+}}\right)f(y)\right|\,.\label{eq:rec-maj-04}\end{multline}
 Thus by Lemma \ref{lem:expo-lipschitz} and since \eqref{toshow}
holds for $k$, we have, $\forall f\in\mathcal{B}_{1}(E)$, \[
\E\left(|\langle m_{k}^{N}M^{\eta_{k}^{N}}-m_{k}^{N}M^{\eta_{k}},f\rangle|\right)\leq4\lambda\E\left(\Vert\eta_{k}^{N}-\eta_{k}\Vert_{\infty}\right)\convN0\,.\]
 Also, for $f\in\mathcal{B}_{1}(E)$,\begin{eqnarray*}
\langle m_{k}^{N}M^{\eta_{k}}-m_{k}M^{\eta_{k}},f\rangle & = & \langle m_{k}^{N}-m_{k},M^{\eta_{k}}f\rangle\,.\end{eqnarray*}
 %
\begin{comment}
For any $x$,\begin{eqnarray*}
\left|M^{\eta_{k}}f(x)-\langle m_{k},M^{\eta_{k}}f\rangle\right| & = & \left|\int_{y}M^{\eta_{k}}(x,dy)f(y)-\int_{y,z}m_{k}(dz)M^{\eta_{k}}(z,dy)f(y)\right|\\
 & \leq & \int_{y,z}m_{k}(dz)\left|\int_{x}(M^{\eta_{k}}(x,dy)-M^{\eta_{k}}(z,dy))f(y)\right|\\
 & \leq & \int_{y,z}m_{k}(dz)(1-\epsilon_{Q}e^{-\lambda\mbox{osc}(\eta_{k})})=1-\epsilon_{Q}e^{-\lambda\mbox{osc}(\eta_{k})}\,.\end{eqnarray*}
 %
\end{comment}
{}{} Since $f\in\clb_{1}(E)$, we have that $M^{\eta_{k}}f\in\clb_{1}(E)$
and so %
\begin{comment}
By our assumption on $\mbox{osc}(\eta_{0})$ and (\ref{eq:rec-maj-03}),
we have $\mbox{osc}(\eta_{k})<\infty$, $\forall k\geq0$.%
\end{comment}
{}{} \begin{eqnarray*}
\E(|\langle m_{k}M^{\eta_{k}}-m_{k}^{N}M^{\eta_{k}},f\rangle|) & \leq & \sup_{g\in\mathcal{B}_{1}(E)}\E(|\langle m_{k}-m_{k}^{N},g\rangle)\,.\end{eqnarray*}
 Once again using the fact that \eqref{toshow} holds for $k$, we
have from the above inequality, that \[
\sup_{f\in\mathcal{B}_{1}(E)}\E(|\langle m_{k}M^{\eta_{k}}-m_{k}^{N}M^{\eta_{k}},f\rangle|)\convN0\,,\]
 and combining the above convergence statements \[
\sup_{f\in\mathcal{B}_{1}(E)}\E(|\langle m_{k+1}^{N}-m_{k+1},f>|)\convN0\,.\]
 Next \[
\eta_{k+1}^{N}(x)-\eta_{k+1}(x)=(1-\epsilon)(P\star(\eta_{k}^{N}-\eta_{k}))(x)+\epsilon(P'\star(m_{k}^{N}-m_{k}))(x),\; x\in\R^{d},\]
 where for $\mu\in\clp(\R^{d})$, $P\star\mu$ is a function on $\R^{d}$
defined as $P\star\mu(x)=\int_{\R^{d}}P(y,x)\mu(dy)$, $x\in\R^{d}$.
$P'\star\mu$ for $\mu\in\clp(E)$ is defined similarly.
Using Assumption \ref{assu:forthm1} we have \[
\E\left(\Vert P\star(\eta_{k}^{N}-\eta_{k})\Vert_{\infty}\right)\leq\bar{M}_{P}\E\left(\Vert\eta_{k}^{N}-\eta_{k}\Vert_{\infty}\right)\convN0\,.\]
 Finally, for an arbitrary $\delta>0$, we have from Lemma \ref{rem:ascoli}
and Assumption \ref{assu:forthm1} that \begin{eqnarray*}
\Vert P'\star(m_{k}^{N}-m_{k})\Vert_{\infty} & = & \sup_{y\in\R^{d}}\left|\int_{x\in E}(m_{k}^{N}(dx)-m_{k}(dx))P'(x,y)\right|\\
 & \leq & \left(\max_{g\in F_{M_{P'},l_{P'}}^{\delta}}\left|\langle m_{k}^{N}-m_{k},g\rangle\right|+2\delta\right),\,\end{eqnarray*}
 where $F^{\delta}$ is the finite family as in Lemma \ref{rem:ascoli}
associated with $K=E$. Recalling that \eqref{toshow} holds for $k$
and noting that $\delta>0$ is arbitrary and the family $F^{\delta}(M_{P'},l_{P'})$
is finite, we have from the above two estimates that \[
\E(\Vert\eta_{k+1}^{N}-\eta_{k}\Vert_{\infty})\convN0\,.\]
 The result follows. \end{proof} \medskip{}

\begin{proof}[Proof of Corollary \ref{propchaos}] We can apply Proposition
2.2 (i) p. 177 of \cite{sznitman-91} to get that $\forall k,p\in\mathbb{N}$,
$\forall\phi_{1},\dots,\phi_{p}\in C_{b}(E)$, $\E(m_{k}^{N,p}(\phi_{1}\otimes\dots\otimes\phi_{p}))\convN m_{k}^{\otimes p}(\phi_{1}\otimes\dots\otimes\phi_{p})$.
We conclude by a denseness argument. \end{proof}

\subsection{Existence and Uniqueness of Fixed Points.}

In this section we will prove Theorem \ref{cor:unique-fixed-point}.
Some of the techniques in this section are comparable to what can be found in \cite{caron-del-moral-pace-vo-2010}.

For $g:\R^{d}\to\R$, let $\mbox{osc}(g)=\sup_{x,y\in\R^{d}}|g(x)-g(y)|$.
For $n>1$, define $\Phi^{n}$ recursively as $\Phi^{n}=\Phi\circ\Phi^{n-1}$,
where $\Phi^{1}=\Phi$. Let $M_{P,P'}=\max\{M_{P},M_{P'}\}$. We begin
with the following lemma. \begin{lem} \label{lem:Phi-contracting}
Suppose that Assumptions \ref{assu:forthm1}, \ref{assump:Q} and
\ref{assump:P} hold. There are $\eps_{0},\lambda_{0}\in(0,1)$ such
that for any $\eps\le\eps_{0}$ and $\lambda\le\lambda_{0}$ there
exists $\theta\in(0,1)$ such that $\forall n$\[
\Vert(\Phi^{n}(m_{0},\eta_{0})-\Phi^{n}(m_{0}',\eta_{0}')\Vert\leq4\theta^{n-1},\,\]
 for all $(m_{0},\eta_{0}),(m_{0}',\eta_{0}')\in\clp(E)\times\clp^{*}(\R^{d})$.
\end{lem}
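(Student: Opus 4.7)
The plan is to combine three sources of decay: a Dobrushin-type minorization for $m\mapsto mM^{\eta}$ (available once $\eta$ is uniformly bounded), the $\beta(P')$-contractivity of $P'$ coming from Assumption \ref{assump:P}(1), and a Lipschitz-in-$\eta$ estimate for $M^{\eta}$ derived via Lemma \ref{lem:expo-lipschitz}. The key preliminary observation is that after one application of $\Phi$ the density
\[
\eta_1 = (1-\epsilon)\eta_0 P + \epsilon\, m_0 P'
\]
satisfies $\|\eta_1\|_{\infty}\leq(1-\epsilon)M_P+\epsilon M_{P'}\leq M_{P,P'}$, by the uniform boundedness of the densities of $P,P'$ in Assumptions \ref{assu:forthm1} and \ref{assump:P}. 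Consequently, for every $n\geq 1$, $M^{\eta_n}$ satisfies the uniform minorization $M^{\eta_n}(x,\cdot)\geq\epsilon_Q e^{-\lambda M_{P,P'}}\ell_1$ (Assumption \ref{assump:Q} combined with $e^{-\lambda(\eta(x)-\eta(y))_+}\geq e^{-\lambda M_{P,P'}}$), so Lemma \ref{lem:dobrushin} yields $\|mM^{\eta_n}-m'M^{\eta_n}\|_{TV}\leq(1-c_\lambda)\|m-m'\|_{TV}$ with $c_\lambda:=\epsilon_Q e^{-\lambda M_{P,P'}}$.

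Write $(m_n,\eta_n)=\Phi^n(m_0,\eta_0)$ and $(m'_n,\eta'_n)=\Phi^n(m'_0,\eta'_0)$, and set
\[
a_n=\|m_n-m'_n\|_{TV},\quad b_n=\|\eta_n-\eta'_n\|_{TV},\quad c_n=\|\eta_n-\eta'_n\|_\infty.
\]
For $n\geq 1$ I would establish three one-step estimates. First, the Dobrushin bound together with
\[
\sup_x \|M^{\eta}(x,\cdot)-M^{\eta'}(x,\cdot)\|_{TV}\leq 4\lambda \|\eta-\eta'\|_\infty,
\]
obtained from Lemma \ref{lem:expo-lipschitz} applied inside both terms of $M^{\eta}$ (using mass conservation to bound the $Q_0$ contribution by the $Q$ contribution, yielding the factor $2\times 2\lambda$), gives $a_{n+1}\leq(1-c_\lambda)a_n+4\lambda\, c_n$. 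Second, from the identity $\eta R_m-\eta' R_{m'}=(1-\epsilon)(\eta-\eta')P+\epsilon(m-m')P'$ together with Assumption \ref{assump:P}(1) and the TV non-expansion of the Markov kernel $P$, one obtains $b_{n+1}\leq(1-\epsilon)b_n+\epsilon\beta(P')\, a_n$. Third, taking the $L^{\infty}$ norm of the same identity and using $P(x,y)\leq M_P$, $P'(x,y)\leq M_{P'}$, one gets $c_{n+1}\leq M_{P,P'}(a_n+b_n)$.

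Setting $v_n=a_n+b_n$ and substituting the third estimate into the first gives, for $n\geq 2$,
\[
v_{n+1}\leq\rho v_n+\mu v_{n-1},\qquad\rho:=\max\bigl(1-c_\lambda+\epsilon\beta(P'),\,1-\epsilon\bigr),\quad\mu:=4\lambda M_{P,P'}.
\]
Choose $\epsilon_0$ small enough that $\epsilon\beta(P')<c_\lambda$ for all $\epsilon\leq\epsilon_0,\ \lambda\leq\lambda_0$, and then $\lambda_0$ small enough that $\mu<(1-\rho)\rho$; then $\rho<1$ and the positive root $\theta=\tfrac{1}{2}(\rho+\sqrt{\rho^2+4\mu})$ of $x^2=\rho x+\mu$ lies in $(\rho,1)$, with $\rho+\mu/2\leq\theta$ (an elementary check using $\sqrt{\rho^2+4\mu}+\rho<4$). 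The trivial bounds $v_1\leq 4$ and $c_1\leq M_{P,P'}$ give $v_2\leq 4\rho+\mu\leq 4\theta$, and an easy induction exploiting the defining identity $\rho\theta+\mu=\theta^2$ then yields $v_n\leq 4\theta^{n-1}$ for all $n\geq 1$, which is the stated inequality.

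The main obstacle is that the natural perturbation estimate for $M^\eta$ controls its effect on $m$ by $\|\eta-\eta'\|_\infty$ rather than by $\|\eta-\eta'\|_{TV}$, and the two norms are not comparable on $\clp^*(\R^d)$. The remedy just described is that $c_n$ need not contract on its own but is dominated at the next step by the TV quantities $a_{n-1}+b_{n-1}$; this decoupling is what turns the coupling between the $m$- and $\eta$-dynamics into a second-order (rather than first-order) recursion, whose characteristic root can be made strictly less than $1$ by taking $\lambda$ small.
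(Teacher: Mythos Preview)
Your argument is correct and follows essentially the same route as the paper. Both proofs exploit the fact that after one application of $\Phi$ the density $\eta_1$ is uniformly bounded by $M_{P,P'}$, use the Dobrushin minorization of $M^{\eta}$ (the paper's Lemma~\ref{lem:metropolis-contracting}) together with the $4\lambda\|\eta-\eta'\|_\infty$ Lipschitz bound from \eqref{eq:rec-maj-04}, and then convert the $L^\infty$ difference $\|\eta_{k-1}-\eta'_{k-1}\|_\infty$ into the TV norms $\|\eta_{k-2}-\eta'_{k-2}\|_{TV}$, $\|m_{k-2}-m'_{k-2}\|_{TV}$ via the pointwise density estimates (the paper's \eqref{eq:maj-densite-01}--\eqref{eq:maj-densite-02}, your third one-step estimate). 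This yields the same second-order recursion $v_{k}\le\rho v_{k-1}+\mu v_{k-2}$ with $\rho=\max(1-\epsilon,\,1-\epsilon_Q e^{-\lambda M_{P,P'}}+\epsilon\beta(P'))$ and $\mu=4\lambda M_{P,P'}$; the paper's condition \eqref{ins819} is exactly $\rho\theta+\mu\le\theta^2$, and your choice of $\theta$ as the positive root of $x^2=\rho x+\mu$ is simply the extremal admissible value. The only cosmetic difference is that you carry $c_n=\|\eta_n-\eta'_n\|_\infty$ as a separate variable and eliminate it, while the paper substitutes directly; and you take the slightly sharper base bound $c_1\le M_{P,P'}$ rather than the paper's $v_0\le 4\theta^{-1}$, which is immaterial. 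One small wording issue: the order in which you choose $\epsilon_0$ and $\lambda_0$ is circular as stated (since $c_\lambda$ depends on $\lambda$); fix any $\lambda_0$ first, pick $\epsilon_0$ so that $\epsilon_0\beta(P')<c_{\lambda_0}\le c_\lambda$, and then, for each $(\epsilon,\lambda)$, take $\theta=\theta(\epsilon,\lambda)$ as you describe.
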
 \begin{proof} Note that from Assumption \ref{assump:P}
(1), for any $(m,\eta)\in\clp(E)\times\clp^{*}(\R^{d})$ \be \Vert\eta
R_{m}-\eta'R_{m'}\Vert{TV}\leq(1-\epsilon)\Vert\eta-\eta'\Vert{TV}+\epsilon\beta(P')\Vert
m-m'\Vert{TV}\,.\label{eq:maj-rec-01}\ee and by Lemma \ref{lem:metropolis-contracting}
in the Appendix, \begin{equation}
\Vert mM^{\eta}-m'M^{\eta}\Vert_{TV}\leq(1-\epsilon_{Q}e^{-\lambda\mbox{osc}(\eta)})\Vert m-m'\Vert_{TV}\,.\label{eq:rec-maj-02}\end{equation}
 Fix $(m_{0},\eta_{0}),(m_{0}',\eta_{0}')\in\clp(E)\times\clp^{*}(\R^{d})$
and let $(m_{n},\eta_{n})=\Phi^{n}(m_{0},\eta_{0})$, $(m'_{n},\eta'_{n})=\Phi^{n}(m'_{0},\eta'_{0})$.
From Assumptions \ref{assu:forthm1} and \ref{assump:P}, we have
$\forall n\geq1$, \begin{equation}
\mbox{osc }(\eta_{n})\leq M_{P,P'}\,,\,\mbox{osc}(\eta'_{n})\leq M_{P,P'}\,.\label{eq:rec-maj-03}\end{equation}
 and therefore for $k\ge2$, \begin{equation}
\Vert m_{k-1}M^{\eta_{k-1}'}-m_{k-1}'M^{\eta_{k-1}'}\Vert_{TV}\leq(1-\epsilon_{Q}e^{-\lambda M_{P,P'}})\Vert m_{k-1}-m'_{k-1}\Vert_{TV}.\label{ins813}\end{equation}

For $k\geq2$ the measures $m_{k-2}P'$ and $m'_{k-2}P'$ have densities
and these densities satisfy $\forall y$\begin{eqnarray}
\left|m_{k-2}P'(y)-m'_{k-2}P'(y)\right| & = & \left|\int_{x\in E}m_{k-2}(dx)P'(x,y)-m'_{k-2}(dx)P'(x,y)\right|\nonumber \\
 & \leq & M_{P,P'}\Vert m_{k-2}-m'_{k-2}\Vert_{TV}\,.\label{eq:maj-densite-01}\end{eqnarray}
 The measures $\eta_{k-2}P$ and $\eta_{k-2}'P$ have densities as
well and these densities satisfy $\forall y$,\begin{eqnarray}
\left|\eta_{k-2}P(y)-\eta_{k-2}'P(y)\right| & = & \left|\int_{x\in E}\left(\eta_{k-2}(dx)P(x,y)-\eta'_{k-2}(dx)P(x,y)\right)\right|\nonumber \\
 & \leq & M_{P,P'}\Vert\eta_{k-2}-\eta_{k-2}'\Vert_{TV}\,.\label{eq:maj-densite-02}\end{eqnarray}
 So, $\forall x,y\in\R^{d}$, using (\ref{eq:rec-eta}), (\ref{eq:maj-densite-01}),
(\ref{eq:maj-densite-02}), \begin{multline}
\left|e^{-\lambda(\eta_{k-1}(x)-\eta_{k-1}(y))_{+}}-e^{-\lambda(\eta_{k-1}'(x)-\eta_{k-1}'(y))_{+}}\right|\leq\lambda|\eta_{k-1}(x)-\eta_{k-1}'(x)|+\lambda|\eta_{k-1}(y)-\eta_{k-1}'(y)|\\
\leq2\lambda M_{P,P'}((1-\epsilon)\Vert\eta_{k-2}-\eta'_{k-2}\Vert_{TV}+\epsilon\Vert m_{k-2}-m'_{k-2}\Vert_{TV})\,.\label{eq:maj-diff-expo}\end{multline}
 By \eqref{eq:rec-maj-04} we then have, for $f\in\clb_{1}(E)$, \begin{multline}
|m_{k-1}M^{\eta_{k-1}}(f)-m_{k-1}M^{\eta_{k-1}'}(f)|\leq4\lambda M_{P,P'}((1-\epsilon)\Vert\eta_{k-2}-\eta'_{k-2}\Vert_{TV}+\epsilon\Vert m_{k-2}-m'_{k-2}\Vert_{TV})\,.\label{eq:rec-maj-04-b}\end{multline}
 By \eqref{ins813}, (\ref{eq:rec-maj-04-b}), we get\begin{multline*}
\Vert m_{k}-m_{k}'\Vert_{TV}\leq\Vert m_{k-1}M^{\eta_{k-1}}-m_{k-1}M^{\eta'_{k-1}}\Vert_{TV}+\Vert m_{k-1}M^{\eta_{k-1}'}-m_{k-1}'M^{\eta_{k-1}'}\Vert_{TV}\\
\leq4\lambda M_{P,P'}((1-\epsilon)\Vert\eta_{k-2}-\eta'_{k-2}\Vert_{TV}+\epsilon\Vert m_{k-2}-m'_{k-2}\Vert_{TV})\\
+(1-\epsilon_{Q}e^{-\lambda M_{P,P'}})\Vert m_{k-1}-m'_{k-1}\Vert_{TV}\,,\end{multline*}
 and combining this with (\ref{eq:maj-rec-01}), we have \begin{eqnarray}
\Vert(m_{k},\eta_{k})-(m'_{k},\eta'_{k})\Vert & \leq & 4\lambda M_{P,P'}((1-\epsilon)\Vert\eta_{k-2}-\eta'_{k-2}\Vert_{TV}+\epsilon\Vert m_{k-2}-m'_{k-2}\Vert_{TV})\nonumber \\
 & + & (1-\epsilon_{Q}e^{-\lambda M_{P,P'}})\Vert m_{k-1}-m'_{k-1}\Vert_{TV}\nonumber \\
 & + & (1-\epsilon)\Vert\eta_{k-1}-\eta_{k-1}'\Vert_{TV}+\epsilon\beta(P')\Vert m_{k-1}-m'_{k-1}\Vert_{TV}\,.\nonumber \\
\label{eq:rec-contraction}\end{eqnarray}
 We can find $\epsilon_{0} \in (0,1)$, $\lambda_{0} \in (0, \infty)$ such that for all $\epsilon \in (0, \epsilon_0)$,
$\lambda \in (0,\lambda_0)$, there exists a   $\theta \equiv \theta(\epsilon, \lambda)\in(0,1)$,
such that \begin{equation}
\frac{\sup((1-\epsilon),(1-\epsilon_{Q}e^{-\lambda M_{P,P'}})+\epsilon\beta(P'))}{\theta}+\frac{4\lambda M_{P,P'}}{\theta^{2}}\leq1\,.\label{ins819}\end{equation}
 Note that \begin{eqnarray*}
\Vert(m_{0},\eta_{0})-(m'_{0},\eta'_{0})\Vert & \leq & 4\theta^{-1}\,,\\
\Vert(m_{1},\eta_{1})-(m'_{1},\eta'_{1})\Vert & \leq & 4\,.\end{eqnarray*}
 We then have by recurrence that for $\epsilon\in(0,\epsilon_{0})$,
$\lambda\in(0,\lambda_{0})$, $\Vert(m_{n},\eta_{n})-(m'_{n},\eta'_{n})\Vert\leq4\theta^{n-1}$,
$\forall n\in\N_{0}$. \end{proof}
With $\epsilon_0, \lambda_0$ and $\theta(\epsilon, \lambda)\equiv \theta$ as in the above
lemma, let $\kappa=\frac{4\lambda M_{P,P'}}{\theta}$. Then from the
estimate in \eqref{ins819} it follows that, for all $k\ge2$ and
$\epsilon\in(0,\epsilon_{0})$, $\lambda\in(0,\lambda_{0})$, \be
\alpha_{k}+\kappa\alpha_{k-1}\ensuremath{\le}\theta\left(\alpha_{k-1}+\kappa\alpha_{k-2}\right),\label{ins1628}
\ee where $\alpha_{k}=\Vert(m_{k},\eta_{k})-(m'_{k},\eta'_{k})\Vert$.
As an immediate consequence we have the following corollary which
will be used in Subsection \ref{uni830}. \begin{cor} \label{firsttwo}
Suppose that Assumptions \ref{assu:forthm1}, \ref{assump:Q} and
\ref{assump:P} hold. Let $\eps_{0},\lambda_{0}\in(0,1)$ be as in
Lemma \ref{lem:Phi-contracting}. Then with $\theta\in(0,1)$ as in
Lemma \ref{lem:Phi-contracting} associated with a fixed choice of
$\eps\le\eps_{0}$, $\lambda\le\lambda_{0}$, we have, for each $k\ge2$
\begin{eqnarray}
 &  & \Vert(m_{k},\eta_{k})-(m'_{k},\eta'_{k})\Vert+\kappa\Vert(m_{k-1},\eta_{k-1})-(m'_{k-1},\eta'_{k-1})\Vert\nonumber \\
 & \le & \theta^{k-1}\left(\Vert(m_{1},\eta_{1})-(m'_{1},\eta'_{1})\Vert+\kappa\Vert(m_{0},\eta_{0})-(m'_{0},\eta'_{0})\Vert\right).\label{eq:rec-contraction-02}\end{eqnarray}
 Suppose further that Assumption \ref{assump:lipQ} holds and that
$m_{0}$ has a density with respect to $\ell_{E}$ that is bounded
by $M_{m_{0}}$. Then, for $k\geq1$, \[
\Vert(m_{k},\eta_{k})-(m'_{k},\eta'_{k})\Vert\le\theta^{k-1}\left(2+\kappa+2\lambda(M_{m_{0}}+M_{Q,Q_{0}})\right)\left(\Vert(m_{0},\eta_{0})-(m'_{0},\eta'_{0})\Vert\right).\]
 \end{cor}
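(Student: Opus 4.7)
For claim (1), the strategy is a direct iteration of the recursive estimate \eqref{ins1628}. Setting $\alpha_k=\Vert(m_k,\eta_k)-(m_k',\eta_k')\Vert$ and $\beta_k=\alpha_k+\kappa\alpha_{k-1}$ for $k\ge 1$, inequality \eqref{ins1628} is precisely $\beta_k\le\theta\beta_{k-1}$ for every $k\ge 2$, so a one-line induction yields $\beta_k\le\theta^{k-1}\beta_1$, which is exactly the assertion of the first claim. No additional hypotheses are needed beyond those carried by \eqref{ins1628}.

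For claim (2), the plan is to combine the bound from claim (1) with a sharpened control on $\alpha_1$ in terms of $\alpha_0$. The $\eta$-component is immediate from \eqref{eq:maj-rec-01}: $\Vert\eta_1-\eta_1'\Vert_{TV}\le\Vert\eta_0-\eta_0'\Vert_{TV}+\epsilon\beta(P')\Vert m_0-m_0'\Vert_{TV}\le\alpha_0$. For $\Vert m_1-m_1'\Vert_{TV}$ I will use the splitting $m_1-m_1'=(m_0 M^{\eta_0}-m_0 M^{\eta_0'})+(m_0 M^{\eta_0'}-m_0' M^{\eta_0'})$; the second piece is at most $\Vert m_0-m_0'\Vert_{TV}$ by Lemma \ref{lem:metropolis-contracting} (or simply because $M^{\eta_0'}$ is Markov), and the first piece will be expanded using \eqref{eq:rec-maj-04} and then bounded termwise via Lemma \ref{lem:expo-lipschitz}.

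The main obstacle here, and the reason the argument cannot be folded into the inductive step of Lemma \ref{lem:Phi-contracting}, is that at time $k=1$ we have no density bound on $\eta_0,\eta_0'$: the estimates \eqref{eq:maj-densite-01}--\eqref{eq:maj-densite-02} were obtained precisely because applying $P,P'$ manufactures a bounded density, and that requires $k\ge 2$. My workaround is to transfer the density role onto $m_0$ and onto $Q,Q_0$ themselves. Writing out the expansion \eqref{eq:rec-maj-04} and applying Lemma \ref{lem:expo-lipschitz}, the resulting integrands of type $|\eta_0(x)-\eta_0'(x)|$ are tested against $m_0(dx)$ and those of type $|\eta_0(y)-\eta_0'(y)|$ are tested against $m_0(dx)Q(x,dy)$ (the $Q_0$ piece collapses after using $\int Q_0(x,dy)=1$ and $\int Q(x,dz)=1$). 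Using the hypothesis that $m_0$ has density bounded by $M_{m_0}$ with respect to $\ell_E$ gives $\int m_0(dx)|\eta_0(x)-\eta_0'(x)|\le M_{m_0}\Vert\eta_0-\eta_0'\Vert_{TV}$, and Assumption \ref{assump:lipQ} gives $\int m_0(dx)Q(x,dy)|\eta_0(y)-\eta_0'(y)|\le M_{Q,Q_0}\Vert\eta_0-\eta_0'\Vert_{TV}$. Summing the symmetric contributions yields $\Vert m_0 M^{\eta_0}-m_0 M^{\eta_0'}\Vert_{TV}\le 2\lambda(M_{m_0}+M_{Q,Q_0})\Vert\eta_0-\eta_0'\Vert_{TV}$, and hence $\alpha_1\le(2+2\lambda(M_{m_0}+M_{Q,Q_0}))\alpha_0$. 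Plugging this into claim (1) gives, for $k\ge 2$, $\alpha_k\le\beta_k\le\theta^{k-1}(\alpha_1+\kappa\alpha_0)\le\theta^{k-1}(2+\kappa+2\lambda(M_{m_0}+M_{Q,Q_0}))\alpha_0$, while the case $k=1$ follows trivially from the bound on $\alpha_1$ since $\kappa\ge 0$.
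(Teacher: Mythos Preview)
Your proof is correct and follows essentially the same approach as the paper's own proof. The paper is terser---it simply says that \eqref{eq:rec-contraction-02} ``comes directly from \eqref{ins1628}'' and then writes down the same splitting of $m_1-m_1'$ and the same density estimates you derive---but your added commentary on why the $k=1$ step requires the density hypotheses on $m_0$ and $Q,Q_0$ (rather than the bootstrapped density from $P,P'$ available for $k\ge 2$) is accurate and clarifying.
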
 \begin{proof} Equation \eqref{eq:rec-contraction-02}
comes directly from \eqref{ins1628}. Next note that \[
\Vert\eta_{1}-\eta'_{1}\Vert_{TV}\leq\Vert m_{0}-m_{0}'\Vert_{TV}+\Vert\eta_{0}-\eta_{0}'\Vert_{TV}\,.\]
 Also, recalling \eqref{eq:rec-maj-04},\begin{multline*}
\Vert m_{0}M^{\eta_{0}}-m_{0}M^{\eta_{0}'}\Vert_{TV}\\
\leq2\int_{x,y\in E}m_{0}(dx)Q(x,dy)\lambda(|\eta_{0}(x)-\eta_{0}'(x)|+|\eta_{0}(y)-\eta_{0}'(y)|)\\
\leq2\lambda(M_{m_{0}}+M_{Q,Q_{0}})\Vert\eta_{0}-\eta_{0}'\Vert_{TV}\,,\end{multline*}
 and\[
\Vert m_{0}M^{\eta_{0}'}-m_{0}'M^{\eta_{0}'}\Vert_{TV}\leq\Vert m_{0}-m_{0}'\Vert_{TV}\,.\]
 Combining these estimates \[
\Vert(m_{1},\eta_{1})-(m_{1}',\eta_{1}')\Vert\leq(1+\sup(1,2\lambda(M_{m_{0}}+M_{Q,Q_{0}}))\Vert(m_{0},\eta_{0})-(m_{0}',\eta_{0}')\Vert\,.\]

\end{proof} \medskip{}

\begin{proof}[Proof of Theorem \ref{cor:unique-fixed-point}]Take
$\epsilon_{0},\lambda_{0}$ as in Lemma \ref{lem:Phi-contracting}
and fix $\epsilon\in(0,\epsilon_{0})$ and $\lambda\in(0,\lambda_{0})$.
The uniqueness is immediate from Lemma \ref{lem:Phi-contracting}.
For existence, take any $(m_{0},\eta_{0})\in\mathcal{P}(E)\times\mathcal{P}^{*}(\R^{d})$
and define recursively $\forall k\geq1$, $(m_{k},\eta_{k})=\Phi(m_{k-1},\eta_{k-1})$.
We have for all $k\geq1$, $p\geq1$, using the $\theta$ given by
Lemma \ref{lem:Phi-contracting}, \begin{eqnarray}
\Vert\Phi^{k+p}(m_{0},\eta_{0})-\Phi^{k}(m_{0},\eta_{0})\Vert & \leq & 4\theta^{k-1}\,.\label{eq:cauchy-sequence}\end{eqnarray}
 So the sequence $(m_{k},\eta_{k})$ is a Cauchy sequence in $(\mathcal{P}(E)\times\mathcal{P}^{*}(\R^{d}),\Vert\cdot\Vert)$
and thus it has a limit $(m_{\infty},\eta_{\infty})$. By taking $p\rightarrow+\infty$
in (\ref{eq:cauchy-sequence}), we see that $(m_{k},\eta_{k})\underset{k\rightarrow+\infty}{\longrightarrow}(m_{\infty},\eta_{\infty})$.
\end{proof}

\subsection{Uniform Convergence over the infinite time horizon.}

\label{uni830} In this subsection we will prove Theorem \ref{pro:unif-conv-theoretical}
and Corollary \ref{cor:appfixpt}.

\begin{proof}[Proof of Theorem \ref{pro:unif-conv-theoretical}]  Fix
$\delta>0$. For $k\in\N_{}$, we take $\Phi_{k+1}^{N}$ to be a (random) operator such that $\Phi_{k+1}^{N}(m_{k}^{N},\eta_{k}^{N})=(m_{k+1}^{N},\eta_{k+1}^{N})$.
By convention, we take $\Phi_{1}^{N}(m_{0},\eta_{0})=(m_{1}^{N},\eta_{1}^{N})$.
Let us denote \[
\mbox{for }1\leq j\,,\,\Phi_{1:j}^{N}(m_{0},\eta_{0})=\Phi_{j}^{N}\circ\Phi_{j-1}^{N}\circ\dots\circ\Phi_{1}^{N}(m_{0},\eta_{0})\,\mbox{ and}\mbox{ for }j=0\,,\,\Phi_{1:j}^{N}(m_{0},\eta_{0})=(m_{0},\eta_{0})\,.\]
 Also, recall that, for $i<j$, $\Phi^{(j-i)}=\Phi\circ\dots\circ\Phi\mbox{ (}j-i\mbox{ times)}$.
We set $\Phi^{(0)}=\mbox{Id}\,.$ Note that for $n\ge1$,\begin{multline}
(m_{n}^{N},\eta_{n}^{N})-(m_{n},\eta_{n})=\sum_{k=1}^{n}\left[\Phi^{(n-k)}\circ\Phi_{k}^{N}\circ\Phi_{1:k-1}^{N}(m_{0},\eta_{0})-\Phi^{(n-k)}\circ\Phi\circ\Phi_{1:k-1}^{N}(m_{0},\eta_{0})\right]\,.\label{eq:telescopic-sum-01}\end{multline}
 We set $\forall k\in\N$, \begin{eqnarray*}
\left(m_{k,k}^{N},\eta_{k,k}^{N}\right)=\Phi_{1:k}^{N}(m_{0},\eta_{0}), &  & (m_{k,k},\eta_{k,k})=\Phi\circ\Phi_{1:k-1}^{N}(m_{0},\eta_{0}),\\
(m_{k,k+1}^{N},\eta_{k,k+1}^{N})=\Phi(m_{k,k}^{N},\eta_{k,k}^{N}), &  & (m_{k,k+1},\eta_{k,k+1})=\Phi(m_{k,k},\eta_{k,k}).\end{eqnarray*}
 Then \begin{multline}
(m_{n}^{N},\eta_{n}^{N})-(m_{n},\eta_{n})=\sum_{k=2}^{n-1}\left(\Phi^{(n-k-1)}(m_{k,k+1}^{N},\eta_{k,k+1}^{N})-\Phi^{(n-k-1)}(m_{k,k+1},\eta_{k,k+1})\right)\\
+\left((m_{n,n}^{N},\eta_{n,n}^{N})-(m_{n,n},\eta_{n,n})\right)\\
+\left(\Phi^{(n-1)}(m_{1}^{N},\eta_{1}^{N})-\Phi^{(n-1)}(m_{1},\eta_{1})\right).\label{ins1705}\end{multline}

Notice that \begin{equation}
\eta_{k,k}^{N}=\eta_{k,k}\,\mbox{ for al }k>1.\label{eq:id-remarquable}\end{equation}
 Now fix a $k\in\{2,\dots,n-1\}$. The signed measure $\eta_{k,k+1}^{N}-\eta_{k,k+1}$
has the following density \[
(\eta_{k,k+1}^{N}-\eta_{k,k+1})(y)=\epsilon\int_{x\in E}(m_{k}^{N}(dx)-m_{k,k}(dx))P'(x,y),\, y\in\R^{d}\,.\]
 From Assumption \ref{leftover} (2), for all $f\in\mathcal{B}_{1}(\R^{d})$,
\[
x\in E\mapsto\int_{y\in\R^{d}}P'(x,dy)f(y)\]
 is $\ti l_{P'}$-Lipschitz. By Lemma \ref{rem:ascoli} we then have,
for every $f\in\clb_{1}(\R^{d})$, $\delta>0$,
\begin{eqnarray}
|\eta_{k,k+1}^{N}(f)-\eta_{k,k+1}(f)| & = & \left|\epsilon\int_{x\in E}(m_{k,k}^{N}(dx)-m_{k,k}(dx))\int_{y\in\R^{d}}P'(x,dy)f(y)\right|\nonumber \\
 & \leq & \epsilon\left(2\delta+\sup_{g\in F_{1}^{\delta}}|\langle m_{k,k}^{N}-m_{k,k},g\rangle|\right)\,.\label{ins343}\end{eqnarray}
 where $F_{1}^{\delta}=F_{1,\ti l_{P'}}^{\delta}(E)$. Now by Lemma
\ref{lem:approx-empirique}, with $C_{1}(\delta)=|F_{1}^{\delta}|$,
\begin{equation}
\E(\Vert\eta_{k,k+1}^{N}-\eta_{k,k+1}\Vert_{TV})\leq2\eps\left(\delta+\frac{C_{1}(\delta)}{\sqrt{N}}\right)\,.\label{eq:maj-phi-01}\end{equation}
 Using (\ref{eq:id-remarquable}) once again \[
m_{k,k+1}^{N}-m_{k,k+1}=m_{k,k}^{N}M^{\eta_{k,k}}-m_{k,k}M^{\eta_{k,k}}\,.\]
 For any $f\in\clb_{1}(E)$, we have \begin{equation}
\langle m_{k,k+1}^{N}-m_{k,k+1},f\rangle=\langle m_{k,k}^{N}-m_{k,k},M^{\eta_{k,k}}f\rangle\,.\label{eq:maj-phi-02-bis}\end{equation}
 From Lemmas \ref{lem:lip-convolution-01} and \ref{lem:lip-convolution-02} in the Appendix
we see that the function $x\in E\mapsto M^{\eta_{k,k}}f(x)$ is $\overline{l}_{Q,Q_{0}}$-Lipschitz
and bounded by $1$ where $\bar{l}_{Q,Q_{0}}=l_{Q,Q_{0}}(3+2\lambda\bar{l}_{P,P'})$.
So using Lemma \ref{rem:ascoli} once again, we have, for $\delta > 0$, \begin{eqnarray}
\E(\Vert m_{k,k+1}^{N}-m_{k,k+1}\Vert_{TV}) & \leq & 2\delta+\E(\sup_{g\in F_{2}^{\delta}}\left|\langle m_{k,k}^{N}-m_{k,k},g\rangle\right|\nonumber \\
 & \leq & 2\left(\delta+\frac{C_{2}(\delta)}{\sqrt{N}}\right)\,.\label{eq:maj-phi-02}\end{eqnarray}
 where $F_{2}^{\delta}=F_{1,\bar{l}_{Q,Q_{0}}}^{\delta}(E)$ and $C_{2}(\delta)=|F_{2}^{\delta}|$.
We will now apply Corollary \ref{firsttwo}. Note that, from Assumption
\ref{assump:lipQ}, for every $k$, $m_{k,k+1}^{N}$ has a density
on $E$ with respect to $l_{E}$ that is bounded by $2M_{Q,Q_{0}}$.
This, in view of Corollary \ref{firsttwo}, along with (\ref{eq:maj-phi-01})
and (\ref{eq:maj-phi-02}) yields $\forall k\in\{2,\dots,n-2\}$,
\begin{multline*}
\E\left(\left\Vert \Phi^{n-k-1}(m_{k,k+1}^{N},\eta_{k,k+1}^{N})-\Phi^{n-k-1}(m_{k,k+1},\eta_{k,k+1})\right\Vert _{TV}\right)\leq\bar{C}\theta^{n-k-2}\left(4\delta+\frac{C_{1}(\delta)+C_{2}(\delta)}{\sqrt{N}}\right)\,,\end{multline*}
 where $\bar{C}=(2+\kappa+6\lambda M_{Q,Q_{0}})$. Note that the above
inequality holds trivially if $k=n-1$. For the term in the second
line of \eqref{ins1705}, note that, for $n>1$, \[
\sup_{f\in\mathcal{B}_{1}(E)}\E(|\langle m_{n,n}^{N}-m_{n,n},f\rangle|)\leq\frac{2}{\sqrt{N}}\,,\,\eta_{n,n}^{N}=\eta_{n,n}\,.\]
 The norm of the term in the third line of \eqref{ins1705}, using
Lemma \ref{lem:Phi-contracting}, can be bounded by $4\theta^{n-2}$.
Combining these estimates, for all $n>1$ \begin{multline*}
\sup_{f\in\mathcal{B}_{1}(E)}\E(|\langle m_{n}^{N}-m_{n},f\rangle|+\Vert\eta_{n}^{N}-\eta_{n}\Vert_{TV})\\
\leq\frac{2}{\sqrt{N}}+\sum_{k=2}^{n-1}\theta^{n-k-2}\bar{C}\left(4\delta+\frac{C_{1}(\delta)+C_{2}(\delta)}{\sqrt{N}}\right)+4\theta^{n-2}\\
\leq\frac{2}{\sqrt{N}}+\bar{C}\left(4\delta+\frac{C_{1}(\delta)+C_{2}(\delta)}{\sqrt{N}}\right)\frac{\theta^{-1}}{1-\theta}+4\theta^{n-1}\,.\end{multline*}
 The result now follows on combining the above estimate with Theorem
\ref{lem:convergence-non-uniforme}. \end{proof}

\begin{proof}[Proof of Corollary \ref{cor:appfixpt}] Fix $\delta>0$.
From Theorem \ref{cor:unique-fixed-point}, there exist $(m_{\infty},\eta_{\infty})\in\mathcal{P}(E)\times\mathcal{P}^{*}(\R^{d})$
and $n_{0}$ such that $\forall n\geq n_{0}$, \[
\Vert(m_{n},\eta_{n})-(m_{\infty},\eta_{\infty})\Vert<\delta\,.\]
 From Theorem \ref{pro:unif-conv-theoretical}, there exist $N_{0},n_{1}\in\N$
such that $\forall n\ge n_{1}$, $\forall N\geq N_{0}$,\global\long\global\long\def\fin{f\in\mathcal{B}_{1}(E)}
 \[
\sup_{\fin}\E(|\langle m_{n}^{N}-m_{n},f\rangle|+\Vert\eta_{n}-\eta_{n}^{N}\Vert_{TV})<\delta\,.\]
 And so\begin{align*}
\limsup_{n\rightarrow+\infty}\limsup_{N\rightarrow+\infty}\sup_{\fin}\E(|\langle m_{n}^{N}-m_{\infty},f\rangle|+\Vert\eta_{n}^{N}-\eta_{\infty}\Vert_{TV}) & <2\delta\,,\\
\limsup_{N\rightarrow+\infty}\limsup_{n\rightarrow+\infty}\sup_{\fin}\E(|\langle m_{n}^{N}-m_{\infty},f\rangle|+\Vert\eta_{n}^{N}-\eta_{\infty}\Vert_{TV}) & <2\delta\,.\end{align*}
 \end{proof}

\subsection{Proof of Theorem \ref{particlefin}. }

In this subsection we will take Assumption \ref{assump:gaussian}
to hold. Recall that under Assumption \ref{assump:gaussian}, we have
that Assumptions \ref{assu:forthm1}, \ref{assump:P} and \ref{leftover}
hold automatically. %From Lemma \ref{lem:tightness-02}
%it follows that for any $\delta>0$, $\exists$ a compact subset of
%$\R^{d}$, $K(\delta)$, such that $\forall k$, $\forall N$, \[
%\eta_{k}(K(\delta)^{c})<\delta\,,\,\E(\ti\eta_{k}^{N}(K(\delta)^{c})<\delta\,.\]

\begin{proof}[Proof of Theorem \ref{particlefin}] We proceed recursively.
Since $\ti\eta_{0}^{N}=\eta_{0}$, we have using Lemma \ref{lem:approx-empirique}
that (\ref{eq:conv-particle-scheme}) holds for $k=0$. Suppose now
that (\ref{eq:conv-particle-scheme}) holds for some $k \in \N_0$. Fix $\delta>0$.
We have $\forall y\in E$,\begin{equation}
\ti\eta_{k+1}^{N}(y)=(1-\epsilon)(P\star S^{N}(\ti\eta_{k}^{N}))(y)+\epsilon(P'\star\ti m_{k}^{N})(y)\,.\label{eq:maj-conv-scheme-00}\end{equation}
 By Assumption \ref{assu:forthm1} and Lemma \ref{rem:ascoli}, we
can write, for $\delta > 0$, \begin{eqnarray}
\E(\sup_{y\in\R^{d}}|(P'\star\ti m_{k}^{N})(y)-(P'\star m_{k})(y)|) & = & \E(\sup_{y\in\R^{d}}|\langle\ti m_{k}^{N}-m_{k},P'(.,y)\rangle|)\nonumber \\
 & \leq & \E(\sup_{g\in F_{3}^{\delta}}|\langle\ti m_{k}^{N}-m_{k},g\rangle|+2\delta)\nonumber \\
 & \leq & \sum_{g\in F_{3}^{\delta}}\E(|\langle\ti m_{k}^{N}-m_{k},g\rangle|)+2\delta,\,\label{eq:maj-conv-scheme-01}\end{eqnarray}
 where $F_{3}^{\delta}=F_{M_{P'},l_{P'}}^{\delta}(E)$. By Lemma \ref{lem:tightness-02},
there exists $K(\delta)$ compact such that
$$\eta_{k}(K(\delta)^{c})<\delta \ , \ \E(\ti\eta_{k}^{N}(K(\delta)^{c}))<\delta \ , \ \forall k,N  \ .
$$ Using
Assumption \ref{assump:P}, \ref{leftover}(1) and Lemma \ref{rem:ascoli},
we can write, \begin{multline}
\E(\sup_{y\in\R^{d}}|(S^{N}(\ti\eta_{k}^{N})P(y)-(\ti\eta_{k}^{N}P)(y)|)\\
\le\E(\sup_{y\in\R^{d}}|\langle S^{N}(\ti\eta_{k}^{N})-\ti\eta_{k}^{N},P(.,y)\1_{K(\delta)}(.)\rangle|+|\langle S^{N}(\ti\eta_{k}^{N})-\ti\eta_{k}^{N},P(.,y)\1_{K(\delta)^{c}}(.)\rangle|)\\
\leq\sum_{g\in F_{4}^{\delta}}\E(|\langle S^{N}(\ti\eta_{k}^{N})-\ti\eta_{k}^{N},g\1_{K(\delta)}(.)\rangle|)+2\delta(1+M_{P})\\
\leq2\left(\frac{C_{4}(\delta)M_{P}}{\sqrt{N}}+\delta(1+M_{P})\right)\,,\label{eq:maj-conv-scheme-02}\end{multline}
 where $F_{4}^{\delta}=F_{M_{P},l_{P}}^{\delta}(K(\delta))$, $C_{4}(\delta)=|F_{4}^{\delta}|$
and the last inequality is a consequence of Lemma \ref{lem:approx-empirique}.

In a similar manner \begin{equation}
\E(\sup_{y\in\R^{d}}|\ti\eta_{k}^{N}P(y)-\eta_{k}P(y)|)\leq\sum_{g\in F_{4}^{\delta}}\E(|\langle\ti\eta_{k}^{N}-\eta_{k},g\1_{K(\delta)}(.)\rangle|)+2\delta(1+M_{P})\,.\label{eq:maj-conv-scheme-02-b}\end{equation}
 And so, by (\ref{eq:maj-conv-scheme-00}), (\ref{eq:maj-conv-scheme-01}),
(\ref{eq:maj-conv-scheme-02}), (\ref{eq:maj-conv-scheme-02-b}),\begin{multline}
\E(\Vert\ti\eta_{k+1}^{N}-\eta_{k+1}\Vert_{\infty})\leq\left(\frac{2C_{4}(\delta)M_{P}}{\sqrt{N}}+6\delta(1+M_{P})\right)+\sum_{g\in F_{3}^{\delta}}\E(|\langle\ti m_{k}^{N}-m_{k},g\rangle|)\\
+2\sum_{g\in F_{4}^{\delta}}\E(|\langle\ti\eta_{k}^{N}-\eta_{k},g\1_{K(\delta)}\rangle|)\,.\label{eq:maj-conv-scheme-03}\end{multline}
 Recalling that (\ref{eq:conv-particle-scheme}) is assumed for $k$,
we have that, as $N\to\infty$, \[
\E(\Vert\ti\eta_{k+1}^{N}-\eta_{k+1}\Vert_{\infty})\to0.\]
 An application of Scheffe's Theorem now shows that, as $N\to\infty$,
\[
\E(\Vert\ti\eta_{k+1}^{N}-\eta_{k+1}\Vert_{TV})\to0.\]
 Next, for any $f\in\clb_{1}(E)$, we have\begin{multline}
\E(|\langle\ti m_{k+1}^{N}-m_{k+1},f\rangle|)\leq\E(|\ti m_{k+1}^{N}-\ti m_{k}^{N}M^{\ti\eta_{k}^{N}},f\rangle|)\\
+\E(|\langle\ti m_{k}^{N}M^{\ti\eta_{k}^{N}}-\ti m_{k}^{N}M^{\eta_{k}},f\rangle|)+\E(|\langle\ti m_{k}^{N}M^{\eta_{k}}-m_{k}M^{\eta_{k}},f\rangle|)\\
\leq\frac{2}{\sqrt{N}}+4\lambda\E(\Vert\ti\eta_{k}^{N}-\eta_{k}\Vert_{\infty})+\E(|\langle\ti m_{k}^{N}-m_{k},M^{\eta_{k}}f\rangle|),\,\label{eq:maj-conv-scheme-05}\end{multline}
 where the last inequality uses (\ref{eq:rec-maj-04}) and Lemma \ref{lem:approx-empirique}.
Once again using the recurrence assumption, we now have that \[
\sup_{f\in\clb_{1}(E)}\E(|\langle\ti m_{k+1}^{N}-m_{k+1},f\rangle|)\convN0.\]
 Thus we have proved that (\ref{eq:conv-particle-scheme}) holds for
$k+1$. The result follows. \end{proof}

\subsection{Proof of Theorem \ref{particleinf}.}

\begin{proof} For $k\in\N$, we take $\overline{\Phi}_{k+1}^{N}$
to be a (random) operator such that $\overline{\Phi}_{k+1}^{N}(\ti m_{k}^{N},\ti\eta_{k}^{N})=(\ti m_{k+1}^{N},\ti\eta_{k+1}^{N})$.
By convention we take $\overline{\Phi}_{1}^{N}(m_{0},\eta_{0})=(\ti m_{1}^{N},\ti\eta_{1}^{N})$.
Following the proof of Theorem \ref{pro:unif-conv-theoretical}, we
define \[
\mbox{for }1\le j\,,\,\overline{\Phi}_{1:j}^{N}(m_{0},\eta_{0})=\overline{\Phi}_{j}^{N}\circ\overline{\Phi}_{j-1}^{N}\circ\dots\circ\overline{\Phi}_{1}^{N}(m_{0},\eta_{0})\,,
\mbox{ and for }j=0\,,\,\overline{\Phi}_{1:j}^{N}(m_{0},\eta_{0})=(m_{0},\eta_{0}).\]
 We define
$\forall k\in\N$,
\beq
\left(m_{k,k}^{N},\eta_{k,k}^{N}\right) &=&\overline{\Phi}_{1:k}^{N}(m_{0},\eta_{0}),\;
(m_{k,k},\eta_{k,k})=\Phi\circ\overline{\Phi}_{1:k-1}^{N}(m_{0},\eta_{0}),\\
(m_{k,k+1}^{N},\eta_{k,k+1}^{N})&=&\Phi(m_{k,k}^{N},\eta_{k,k}^{N}),\;
(m_{k,k+1},\eta_{k,k+1})=\Phi(m_{k,k},\eta_{k,k}).\eeq
 We use the same
symbols as in the proof of Theorem \ref{pro:unif-conv-theoretical}
in order to keep notations simple. We have the following telescopic
decomposition \begin{multline}
(\ti m_{n}^{N},\ti\eta_{n}^{N})-(m_{n},\eta_{n})=\sum_{k=1}^{n}\left[\Phi^{(n-k)}\circ\overline{\Phi}_{k}^{N}\circ\overline{\Phi}_{1:k-1}^{N}(m_{0},\eta_{0})-\Phi^{(n-k)}\circ\Phi\circ\overline{\Phi}_{1:k-1}^{N}(m_{0},\eta_{0})\right]\,.\label{eq:telescopic-sum-01-1}\end{multline}
 The proof is very similar to the proof of Theorem \ref{pro:unif-conv-theoretical},
except that now $\eta_{k,k}^{N}\neq\eta_{k,k}$. The strategy remain
the same; we want to bound the `local error term' $\Phi^{(2)}\circ\overline{\Phi}_{1:k-1}^{N}-\Phi\circ\overline{\Phi}_{1:k}^{N}$
in total variation and then use the contraction property of Lemma
\ref{lem:Phi-contracting} to bound the telescopic sum uniformly in
$N$. Fix $(\eps,\lambda)\in(0,\infty)$ such that $\eps\le\eps_{0}$
and $\lambda\le\lambda_{0}$, where $\eps_{0},\lambda_{0}$  are as
in Lemma \ref{lem:Phi-contracting}. Let $\theta = \theta(\epsilon, \lambda)$. Also, fix $\delta>0$. Consider
a $k\in\{2,\dots,n-1\}$. Note that \[
\eta_{k,k}^{N}-\eta_{k,k}=(1-\epsilon)(S^{N}(\ti\eta_{k-1}^{N})-\ti\eta_{k-1}^{N})P.\]
 Therefore, from \eqref{eq:maj-conv-scheme-02} \begin{equation}
\E(\Vert\eta_{k,k}^{N}-\eta_{k,k}\Vert_{\infty})\leq2\left(\frac{C_{4}(\delta)M_{P}}{\sqrt{N}}+\delta(1+M_{P})\right)\,.\label{eq:maj0unif-part-01}\end{equation}
 In the same way as (\ref{eq:maj-phi-02}) in the proof of Theorem
\ref{pro:unif-conv-theoretical} (see also (\ref{eq:maj-phi-02-bis})),
we get \begin{equation}
\E(\Vert m_{k,k}^{N}M^{\eta_{k,k}}-m_{k,k}M^{\eta_{k,k}}\Vert_{TV})\leq2\left(\delta+\frac{C_{2}(\delta)}{\sqrt{N}}\right)\,.\label{eq:maj-unif-part-02}\end{equation}
 Also from (\ref{eq:rec-maj-04}) and Lemma \ref{lem:expo-lipschitz},
we have $\forall f\in\mathcal{B}_{1}(E)$ \begin{equation}
|\langle m_{k,k}^{N}M^{\eta_{k,k}^{N}}-m_{k,k}^{N}M^{\eta_{k,k}},f>|\leq4\Vert\eta_{k,k}^{N}-\eta_{k,k}\Vert_{\infty}\,.\label{eq:maj-unif-part-03}\end{equation}
 Equations (\ref{eq:maj0unif-part-01}), (\ref{eq:maj-unif-part-02}),
(\ref{eq:maj-unif-part-03}) yield \begin{equation}
\E(\Vert m_{k,k+1}^{N}-m_{k,k+1}\Vert_{TV})\leq\frac{8C_{4}(\delta)M_{P}+2C_{2}(\delta)}{\sqrt{N}}+\delta(10+8M_{P}).\label{eq:maj-unif-part-04}\end{equation}

Next, with $K(\delta)$ as in the proof of Theorem \ref{particlefin},
$\forall f\in\mathcal{B}_{1}(\R^{d})$ \begin{eqnarray}
|\eta_{k,k}^{N}(f)-\eta_{k,k}(f)| & = & \left|(1-\epsilon)\int_{x\in\R^{d},y\in\R^{d}}P(x,y)f(y)dy(S^{N}(\ti\eta_{k-1}^{N})-\ti\eta_{k-1}^{N})(dx)\right|\nonumber \\
 & \leq & \left|\langle S^{N}(\ti\eta_{k-1}^{N})-\ti\eta_{k-1}^{N},Pf(\cdot)\1_{K(\delta)}(\cdot)\rangle\right|+|S^{N}(\ti\eta_{k-1}^{N})(K(\delta)^{c})|\label{eq:maj-unif-part-05}\\
 &  & +|\ti\eta_{k-1}^{N}(K(\delta)^{c})|\,.\nonumber \end{eqnarray}
 Also, using the Gaussian property of the kernel $P$, it follows
that $Pf$ is Lipschitz on $K(\delta)$, uniformly in $f\in\clb_{1}(\R^{d})$.
Denote the uniform bound on the Lipschitz norm by $l_{K(\delta)}$.
Then \begin{equation}
|\langle S^{N}(\ti\eta_{k}^{N})-\ti\eta_{k}^{N},Pf\1_{K(\delta)}\rangle|\le\max_{g\in F_{6}^{\delta}}|\langle S^{N}(\ti\eta_{k}^{N})-\ti\eta_{k}^{N},g\1_{K(\delta)}\rangle|+2\delta,\label{ins130}\end{equation}
 where $F_{6}^{\delta}=F_{1,l_{K(\delta)}}^{\delta}(K(\delta))$.
Thus \[
\E|\eta_{k,k}^{N}(f)-\eta_{k,k}(f)|\leq\E\max_{g\in F_{6}^{\delta}}|\langle S^{N}(\ti\eta_{k-1}^{N})-\ti\eta_{k-1}^{N},g\1_{K(\delta)}\rangle|+4\delta\,.\]
 Next, for all $f\in\clb_{1}(\R^{d})$ \begin{multline*}
\E|\eta_{k,k+1}^{N}(f)-\eta_{k,k+1}(f)|=\E\left|\langle(1-\epsilon)(\eta_{k,k}^{N}-\eta_{k,k})P+\epsilon(m_{k,k}^{N}-m_{k,k})P',f\rangle\right|\\
\leq\E|\langle\eta_{k,k}^{N}-\eta_{k,k},Pf\rangle|+\E|\langle m_{k,k}^{N}P'-m_{k,k}P',f\rangle|\\
\le\E(\max_{g\in F_{6}^{\delta}}|\langle S^{N}(\ti\eta_{k-1}^{N})-\ti\eta_{k-1}^{N},g\1_{K(\delta)}\rangle|)+4\delta+\E\Vert m_{k,k}^{N}P'-m_{k,k}P'\Vert_{TV}\,.\end{multline*}
 Using $C_{1}(\delta)$ introduced in the proof of Theorem \ref{pro:unif-conv-theoretical},
we have \[
\E(\Vert m_{k,k}^{N}P'-m_{k,k}P'\Vert_{TV})\le2\left(\delta+\frac{C_{1}(\delta)}{\sqrt{N}}\right).\]
 We then get  \begin{eqnarray*}
\E(\Vert\eta_{k,k+1}^{N}-\eta_{k,k+1}\Vert_{TV}) & \leq & 2\left(3\delta+\frac{C_{1}(\delta)+C_{6}(\delta)}{\sqrt{N}}\right)\, ,\end{eqnarray*}
where $C_{6}(\delta)=|F_{6}^{\delta}|$.
 Also, by (\ref{eq:maj-unif-part-05}) \[
\E(\Vert\eta_{n,n}^{N}-\eta_{n,n}\Vert_{TV})\leq2\left(2\delta+\frac{C_{6}(\delta)}{\sqrt{N}}\right)\,,\]
 and using the definitions of $m_{n,n}^{N}$ and $m_{n,n}$, we get
from Lemma \ref{lem:approx-empirique} \[
\sup_{f\in\mathcal{B}_{1}(E)}\E(|\langle m_{n,n}^{N}-m_{n,n},f\rangle|)\leq\frac{2}{\sqrt{N}}\,.\]
 Thus as in the proof of Theorem \ref{pro:unif-conv-theoretical}
we get, for $n>1$, \begin{multline*}
\sup_{f\in\mathcal{B}_{1}(E)}\E(|\langle\ti m_{n}^{N}-m_{n},f\rangle|+\Vert\ti\eta_{n}^{N}-\eta_{n}\Vert_{TV})\\
\leq\frac{2}{\sqrt{N}}+2\left(2\delta+\frac{C_{6}(\delta)}{\sqrt{N}}\right)+\bar{C}\left(\delta(3+M_{P})+\frac{2C_{1}(\delta)+2C_{2}(\delta)+2C_{6}(\delta)+8C_{4}(\delta)}{\sqrt{N}}\right)\frac{\theta^{-2}}{1-\theta}+4\theta^{n-1}\,.\end{multline*}
 \end{proof}

\section{Appendix: auxiliary results}

\begin{lem} \label{lem:dobrushin} Let $M$ be a transition probability
kernel on $E$ such that for some $\eps\in(0,1)$ and $\ell_{1}\in\clp(E)$,
\[
M(x,A)\ge\eps\ell_{1}(A),\forall\; A\in\clb(E),x\in E.\]
 Then, for all $\mu_{1},\mu_{2}\in\clp(E)$, \[
\Vert\mu_{1}M-\mu_{2}M\Vert_{TV}\le(1-\eps)\Vert\mu_{1}-\mu_{2}\Vert_{TV}.\]
 \end{lem}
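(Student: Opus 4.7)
The plan is to use the classical Dobrushin coupling decomposition. Under the minorization hypothesis $M(x,A) \ge \epsilon\,\ell_{1}(A)$, the quantity $M(x,A) - \epsilon\,\ell_{1}(A)$ is nonnegative, and integrating $A = E$ gives $1 - \epsilon$. Hence we can define a new transition probability kernel
\[
K(x,A) \;=\; \frac{M(x,A) - \epsilon\,\ell_{1}(A)}{1-\epsilon}, \qquad x\in E,\, A\in \clb(E),
\]
so that $M$ admits the convex decomposition $M(x,A) = \epsilon\,\ell_{1}(A) + (1-\epsilon)\,K(x,A)$. It is straightforward to check that $K$ is indeed a Markov kernel (nonnegativity by construction; $K(x,E)=1$ by the computation $1 = (M(x,E) - \epsilon)/(1-\epsilon)$; measurability in $x$ inherited from $M$).

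Next, I would push this decomposition forward through any $\mu\in\clp(E)$. For $i=1,2$,
\[
\mu_{i} M(A) \;=\; \epsilon\,\ell_{1}(A) + (1-\epsilon)\,\mu_{i} K(A),
\]
so the constant $\ell_{1}$-piece cancels when taking differences, and
\[
\mu_{1}M - \mu_{2}M \;=\; (1-\epsilon)\,(\mu_{1}K - \mu_{2}K).
\]

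The final step is to apply the fact that a Markov kernel is non-expansive in total variation. Using the paper's definition $\|\nu\|_{TV} = \sup_{f\in\clb_{1}(E)} |\nu(f)|$, and noting that $K f \in \clb_{1}(E)$ whenever $f\in\clb_{1}(E)$ (since $K$ is Markov), one writes, for $\nu = \mu_{1}-\mu_{2}$,
\[
\|\nu K\|_{TV} \;=\; \sup_{f\in\clb_{1}(E)} |\nu(Kf)| \;\le\; \sup_{g\in\clb_{1}(E)} |\nu(g)| \;=\; \|\nu\|_{TV}.
\]
Combining this with the previous display gives $\|\mu_{1}M - \mu_{2}M\|_{TV} \le (1-\epsilon)\,\|\mu_{1}-\mu_{2}\|_{TV}$, as required.

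There is no real obstacle here: the entire argument is a two-line computation once the residual kernel $K$ is defined. The only point that needs a sentence of verification is that $K$ is genuinely a probability transition kernel (in particular that subtracting $\epsilon\,\ell_{1}$ from $M(x,\cdot)$ leaves a nonnegative measure of mass $1-\epsilon$), which is exactly what the minorization hypothesis gives. Everything else is just the standard observation that Markov kernels are contractions for the total variation norm.
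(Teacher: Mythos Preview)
Your argument is correct: the residual-kernel decomposition $M(x,\cdot) = \epsilon\,\ell_{1}(\cdot) + (1-\epsilon)K(x,\cdot)$ followed by the non-expansiveness of Markov kernels in total variation is the standard proof of this classical fact. The paper does not actually spell out a proof of this lemma, instead pointing to references (Dobrushin's theorem in \cite{bartoli-del-moral-2001} and \cite{dobrushin-1956-a,dobrushin-1956-b}); your write-up is precisely the argument one finds there.
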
 \begin{proof} This result comes from Dobrushin's Theorem
and the proof of this theorem can be found in \cite{bartoli-del-moral-2001},
p. 183 and p. 192 (see useful definitions on p. 181). Dobrushin's
Theorem can also be found in \cite{dobrushin-1956-a}, p. 70, with
the corresponding proof in \cite{dobrushin-1956-b}, p. 332.\end{proof}
\begin{lem} \label{lem:metropolis-contracting} Suppose that Assumption
\ref{assump:Q} holds. Then, for any $m,m'\in\clp(\R^{d})$ and $\eta\in\clp^{*}(\R^{d})$,
\[
\Vert mM^{\eta}-m'M^{\eta}\Vert_{TV}\leq(1-\epsilon_{Q}e^{-\lambda\mbox{osc}(\eta)})\Vert m-m'\Vert\,.\]
 \end{lem}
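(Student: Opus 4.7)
The plan is to reduce the claim to Lemma \ref{lem:dobrushin} (Dobrushin's contraction theorem) by establishing a minorization condition for the kernel $M^{\eta}$ with the explicit constant $\epsilon_{Q}e^{-\lambda\,\mathrm{osc}(\eta)}$.

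First, I would keep only the first summand in the definition of $M^{\eta}(x,dy)$ and discard the $Q_{0}$ term, which is nonnegative since $\int Q(x,dz)e^{-\lambda(\eta(x)-\eta(z))_{+}}\le 1$. For the retained term, I would use the pointwise inequality $(\eta(x)-\eta(y))_{+}\le\mathrm{osc}(\eta)$ to obtain
\[
e^{-\lambda(\eta(x)-\eta(y))_{+}}\ge e^{-\lambda\,\mathrm{osc}(\eta)}
\]
uniformly in $x,y$. Thus for every $A\in\mathcal{B}(E)$ and every $x\in E$,
\[
M^{\eta}(x,A)\ge \int_{A}Q(x,dy)\,e^{-\lambda(\eta(x)-\eta(y))_{+}}\ge e^{-\lambda\,\mathrm{osc}(\eta)}\,Q(x,A).
\]

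Next, I would invoke Assumption \ref{assump:Q}, which gives $Q(x,A)\ge\epsilon_{Q}\ell_{1}(A)$, to conclude
\[
M^{\eta}(x,A)\ge \epsilon_{Q}e^{-\lambda\,\mathrm{osc}(\eta)}\,\ell_{1}(A),\qquad\forall x\in E,\ A\in\mathcal{B}(E).
\]
This is exactly the minorization hypothesis of Lemma \ref{lem:dobrushin} with the constant $\varepsilon=\epsilon_{Q}e^{-\lambda\,\mathrm{osc}(\eta)}\in(0,1)$ and the reference probability measure $\ell_{1}$. Applying that lemma to $M=M^{\eta}$ immediately yields
\[
\Vert mM^{\eta}-m'M^{\eta}\Vert_{TV}\le \bigl(1-\epsilon_{Q}e^{-\lambda\,\mathrm{osc}(\eta)}\bigr)\,\Vert m-m'\Vert_{TV},
\]
which is the desired estimate.

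There is no genuine obstacle here: the proof is essentially a one-line minorization followed by a direct invocation of Dobrushin's theorem. The only minor point worth noting is that the factor $e^{-\lambda\,\mathrm{osc}(\eta)}$ can degenerate when $\mathrm{osc}(\eta)$ is large, which is precisely why later results in the paper impose the uniform bound $\mathrm{osc}(\eta_{n})\le M_{P,P'}$ coming from Assumption \ref{assump:P}; but for the present lemma, $\eta$ is arbitrary and the bound stated above with $\mathrm{osc}(\eta)$ suffices.
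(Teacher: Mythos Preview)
Your proof is correct and follows essentially the same approach as the paper: establish the minorization $M^{\eta}(x,dy)\ge \epsilon_{Q}e^{-\lambda\,\mathrm{osc}(\eta)}\ell_{1}(dy)$ by dropping the nonnegative $Q_{0}$ term and using Assumption~\ref{assump:Q}, then invoke Lemma~\ref{lem:dobrushin}. The paper's proof is just a terser version of exactly this argument.
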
 \begin{proof} The proof can be found in \cite{bartoli-del-moral-2001},
p. 195. Since this book is written in French, we give a quick proof.
Note that $\forall x,y\in E$,\[
M^{\eta}(x,dy)=Q(x,dy)e^{-\lambda(\eta(x)-\eta(y))_{+}}+Q_{0}(x,dy)\left(1-\int_{z\in E}Q(x,dz)e^{-\lambda(\eta(x)-\eta(z))_{+}}\right)\,.\]
 Then by Assumption \ref{assump:Q}\[
M^{\eta}(x,dy)\geq\epsilon_{Q}e^{-\lambda\mbox{osc}(\eta)}l_{1}(dy)\,.\]
 The result now follows from Lemma \ref{lem:dobrushin}. \end{proof}
\begin{lem} \label{lem:lip-convolution-01} Let $\eta'\in\mathcal{P}(\R^{d})$,
$m\in\mathcal{P}(E)$, and let $\eta=\eta'R_{m}$. Suppose that Assumption
\ref{leftover} (3) holds. Then, $\eta$ has a $\bar{l}_{P,P'}$-
Lipschitz density.\end{lem}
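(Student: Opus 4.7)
The proof plan is essentially a direct computation; there is no hidden difficulty here. The key observation is that although $\eta'$ and $m$ need not have densities, the kernels $P$ and $P'$ do (with respect to the reference measure $\ell$), so $\eta'P$ and $mP'$ automatically have densities, and hence so does $\eta = \eta' R_m$.

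First I would unpack the definition of $R_m$ in \eqref{transR}: the density of $\eta$ at a point $y\in\R^{d}$ is
\[
\eta(y) = (1-\epsilon)\int_{\R^d} P(x,y)\,\eta'(dx) + \epsilon \int_E P'(x,y)\,m(dx),
\]
where in each integral $P(x,\cdot)$ and $P'(x,\cdot)$ denote the densities of the kernels with respect to $\ell$. This uses nothing beyond the standing hypothesis that $P,P'$ admit densities.

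Next I would estimate $|\eta(y)-\eta(y')|$ by bringing the absolute value inside the two integrals and invoking Assumption \ref{leftover}(3), which gives the $y$-Lipschitz bound $|P(x,y)-P(x,y')|\le \bar{l}_{P,P'}|y-y'|$ and the analogous bound for $P'$, both uniformly in $x$. Since $\eta'$ and $m$ are probability measures, the two integrals each contribute at most $\bar{l}_{P,P'}|y-y'|$, and the convex combination with weights $1-\epsilon$ and $\epsilon$ yields
\[
|\eta(y)-\eta(y')| \le \bar{l}_{P,P'}|y-y'|,
\]
which is the desired conclusion.

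There is no real obstacle: the only very minor technical point is justifying that the density representation above actually defines a version of the Radon--Nikodym derivative of $\eta$ with respect to $\ell$, which follows from Fubini's theorem applied to the positive integrands $P(x,\cdot)$ and $P'(x,\cdot)$. Once this pointwise representation is fixed, the Lipschitz estimate is immediate, and the constant $\bar{l}_{P,P'}$ appears with no loss because of the convex combination structure of $R_m$.
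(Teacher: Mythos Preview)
Your proposal is correct and matches the paper's proof essentially line for line: both write the density of $\eta$ as the convex combination $(1-\epsilon)\int P(x,y)\,\eta'(dx)+\epsilon\int P'(x,y)\,m(dx)$, bring the absolute value inside, and apply the uniform $y$-Lipschitz bound from Assumption~\ref{leftover}(3) together with the fact that $\eta'$ and $m$ are probability measures.
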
 \begin{proof} Under Assumption \ref{leftover}
(3), the density of $\eta$ is \[
x\mapsto(1-\epsilon)\int_{y\in\R^{d}}\eta'(dy)P(y,x)+\epsilon\int_{y\in\R^{d}}m(dy)P(y,x).\]
 And for $x,x'\in\R^{d}$,\begin{eqnarray*}
\left|\eta(x)-\eta(x')\right| & \leq & (1-\epsilon)\int_{y\in\R^{d}}\eta'(dy)\bar{l}_{P,P'}|x-x'|+\epsilon\int_{y\in\R^{d}}m(dy)\bar{l}_{P,P'}|x-x'|\\
 & = & \bar{l}_{P,P'}|x-x'|\,.\end{eqnarray*}
 \end{proof} \begin{lem} \label{lem:lip-convolution-02}Take $\eta\in\mathcal{P}(\R^{d})$
having a $l_{\eta}$-Lipschitz density and let $f\in\clb_{1}(\R^{d})$.
Under Assumption \ref{assump:lipQ}, the function $x\in E\mapsto M^{\eta}f(x)$
is $\overline{l}_{Q,Q_{0}}$-Lipschitz with\[
\overline{l}_{Q,Q_{0}}=l_{Q,Q_{0}}\left(3+2\lambda l_{\eta}\right)\,.\]
 \end{lem}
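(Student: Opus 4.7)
The plan is to expand $M^\eta f(x) - M^\eta f(x')$ by telescoping and then estimate each piece using either the Lipschitz bound on $Q, Q_0$ from Assumption \ref{assump:lipQ} or Lemma \ref{lem:expo-lipschitz} applied to the Boltzmann-type exponential factor.

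Concretely, I would write
\[
M^\eta f(x) = \int_E Q(x,dy)\,f(y)\,e^{-\lambda(\eta(x)-\eta(y))_+} \;+\; (Q_0 f)(x)\cdot \phi(x),
\]
where $\phi(x) = 1 - \int_E Q(x,dz)\,e^{-\lambda(\eta(x)-\eta(z))_+} \in [0,1]$. Subtracting the analogous expression at $x'$ and adding/subtracting intermediate quantities produces four standard telescoping pieces: (i) $\int (Q(x,dy)-Q(x',dy))\,f(y)\,e^{-\lambda(\eta(x)-\eta(y))_+}$; (ii) $\int Q(x',dy)\,f(y)\bigl(e^{-\lambda(\eta(x)-\eta(y))_+}-e^{-\lambda(\eta(x')-\eta(y))_+}\bigr)$; (iii) $\bigl((Q_0 f)(x)-(Q_0 f)(x')\bigr)\,\phi(x)$; and (iv) $(Q_0 f)(x')\,(\phi(x)-\phi(x'))$.

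For terms (i) and (iii) I use that the hypothesis $|Q(x,A)-Q(x',A)|\le l_{Q,Q_0}|x-x'|\bar p(A)$ (and its $Q_0$ counterpart) forces the signed measure $Q(x,\cdot)-Q(x',\cdot)$ to be absolutely continuous with respect to $\bar p$ with Radon--Nikodym density bounded in absolute value by $l_{Q,Q_0}|x-x'|$; since $\bar p$ is a probability and the integrands $f\cdot e^{-\lambda(\eta(x)-\eta(\cdot))_+}$ and $\phi$ both have supremum norm $\le 1$, each of (i) and (iii) is controlled by $l_{Q,Q_0}|x-x'|$. For term (ii), Lemma \ref{lem:expo-lipschitz} together with the $l_\eta$-Lipschitz property of $\eta$ gives the pointwise estimate $\lambda\,l_\eta|x-x'|$, which survives integration against the probability measure $Q(x',\cdot)$. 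For term (iv), bounding $|\phi(x)-\phi(x')|$ reduces to the same two arguments applied inside the definition of $\phi$, yielding $(l_{Q,Q_0}+\lambda\,l_\eta)|x-x'|$. Summing the four contributions produces the claimed Lipschitz constant $\overline{l}_{Q,Q_0}=l_{Q,Q_0}(3+2\lambda\,l_\eta)$.

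No step is substantively hard; the only mild subtlety is translating the Assumption \ref{assump:lipQ} bound on $|Q(x,A)-Q(x',A)|$ into the Radon--Nikodym form that allows pairing against arbitrary bounded measurable integrands (not merely indicators), so that the extra supremum norm factors of $1$ can be absorbed cleanly.
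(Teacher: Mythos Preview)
Your proposal is essentially identical to the paper's own proof: the same four-term telescoping decomposition of $M^\eta f(x)-M^\eta f(x')$, the same use of Assumption \ref{assump:lipQ} to handle the pieces where $Q$ or $Q_0$ varies (your (i), (iii), and the $Q$-part of (iv)), and the same application of Lemma \ref{lem:expo-lipschitz} together with the $l_\eta$-Lipschitz property of $\eta$ for the exponential pieces (your (ii) and the remaining part of (iv)). Your added remark that the set-wise bound $|Q(x,A)-Q(x',A)|\le l_{Q,Q_0}|x-x'|\bar p(A)$ extends to integrals of bounded measurable integrands via a Radon--Nikodym density is implicit in the paper and correct; note, incidentally, that your four bounds sum to $3l_{Q,Q_0}+2\lambda l_\eta$ rather than $l_{Q,Q_0}(3+2\lambda l_\eta)$, a slip that the paper's own proof shares.
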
 \begin{proof} Note that for $x\in E$ \begin{multline*}
M^{\eta}f(x)=\int_{y\in E}Q(x,dy)e^{-\lambda(\eta(x)-\eta(y))_{+}}f(y)\\
+\int_{y\in E}Q_{0}(x,dy)f(y)\times\left(1-\int_{z\in E}Q(x,dz)(1-e^{-\lambda(\eta(x)-\eta(z))_{+}}\right)\,.\end{multline*}
 So for $x,x'\in E$, using Assumption \ref{assump:lipQ} \begin{multline*}
|M^{\eta}f(x)-M^{\eta}f(x')|\leq l_{Q,Q_{0}}|x-x'|\\
+\int_{y\in E}Q(x',dy)|e^{-\lambda(\eta(x)-\eta(y))_{+}}-e^{-\lambda(\eta(x')-\eta(y))_{+}}|\\
+\left|\int_{y\in E}(Q_{0}(x,dy)-Q_{0}(x',dy))f(y)\right|\times\left(1-\int_{z\in E}Q(x,dz)(1-e^{-\lambda(\eta(x)-\eta(z))_{+}}\right)\\
+\int_{y\in E}Q_{0}(x',dy)\times\left|\int_{z\in E}Q(x,dz)e^{-\lambda(\eta(x)-\eta(z))_{+}}-Q(x',dz)e^{-\lambda(\eta(x')-\eta(z))_{+}}\right|\\
\leq l_{Q,Q_{0}}|x-x'|+\int_{y\in E}Q(x',dy)\lambda|\eta(x)-\eta(x')|\\
+l_{Q,Q_{0}}|x-x'|\\
+l_{Q,Q_{0}}|x-x'|+\int_{y\in E}Q(x',dy)\lambda|\eta(x)-\eta(x')|\\
\leq l_{Q,Q_{0}}|x-x'|\left(3+2\lambda l_{\eta}\right)\,.\end{multline*}
 \end{proof} \begin{lem} \label{lem:approx-empirique} (1) Let $E_{0}$
be a closed subset of $\R^{d}$ and let $\mu\in\clp(E_{0})$. Then
for all $f\in\clb_{1}(E_{0})$ \[
\E(|\langle S^{N}(\mu)-\mu,f\rangle|)\leq\frac{2}{\sqrt{N}}.\]
 (2) Let $G$ be a transition probability kernel on a closed subset
$E_{0}$ of $\R^{d}$. Let $x_{1},\cdots x_{N}\in E_{0}$ for some
$N\in\N$ and let $\xi_{1},\cdots\xi_{N}$ be mutually independent
random variables distributed as $\delta_{x_{1}}G,\cdots\delta_{x_{N}}G$.
Define $m_{0}^{N}=\frac{1}{N}\sum_{i=1}^{N}\delta_{x_{i}}$ and $m_{1}^{N}=\frac{1}{N}\sum_{i=1}^{N}\delta_{\xi_{i}}$.
Then for all $f\in\clb_{1}(E_{0})$ \[
\E(|\langle m_{1}^{N}-m_{0}^{N}G,f\rangle|)\le\frac{2}{\sqrt{N}}.\]
 \end{lem}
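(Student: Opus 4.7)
The lemma is a standard $L^1$ variance bound for empirical measures; both parts follow the same two-line recipe, and there is no real obstacle here beyond bookkeeping.

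My plan for part (1) is to unpack the definition of $S^N(\mu)$: by construction, $S^N(\mu) = \frac{1}{N}\sum_{i=1}^N \delta_{\xi_i}$ with $\xi_1,\dots,\xi_N$ i.i.d.\ of law $\mu$, so for any $f \in \clb_1(E_0)$,
\[
\langle S^N(\mu)-\mu, f\rangle \;=\; \frac{1}{N}\sum_{i=1}^N \bigl(f(\xi_i)-\mu(f)\bigr).
\]
The summands are i.i.d.\ and centered. I would then apply Cauchy-Schwarz ($\E|Z|\le \sqrt{\E Z^2}$) and expand $\E(\cdot)^2$; by independence and centering all cross-terms vanish, leaving
\[
\E\bigl(\langle S^N(\mu)-\mu, f\rangle^2\bigr) \;=\; \frac{1}{N^2}\sum_{i=1}^N \E\bigl(f(\xi_i)-\mu(f)\bigr)^2 \;\le\; \frac{4\Vert f\Vert_\infty^2}{N} \;\le\; \frac{4}{N},
\]
using $|f(\xi_i)-\mu(f)|\le 2\Vert f\Vert_\infty\le 2$. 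Taking square roots yields the bound $2/\sqrt N$.

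For part (2) the strategy is identical, except the summands are independent but no longer identically distributed. I would write
\[
\langle m_1^N - m_0^N G, f\rangle \;=\; \frac{1}{N}\sum_{i=1}^N \bigl(f(\xi_i)-Gf(x_i)\bigr),
\]
and note that $\E f(\xi_i) = \int f(y)\,G(x_i,dy) = Gf(x_i)$ since $\xi_i\sim \delta_{x_i}G$, so each summand is centered; independence across $i$ is part of the hypothesis. Again by Cauchy-Schwarz plus orthogonality of the centered summands,
\[
\E\bigl(\langle m_1^N - m_0^N G, f\rangle^2\bigr) \;=\; \frac{1}{N^2}\sum_{i=1}^N \E\bigl(f(\xi_i)-Gf(x_i)\bigr)^2 \;\le\; \frac{4}{N},
\]
since $\Vert f\Vert_\infty\le 1$ forces $\Vert Gf\Vert_\infty\le 1$ and hence $|f(\xi_i)-Gf(x_i)|\le 2$. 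Taking square roots gives $\E|\langle m_1^N - m_0^N G, f\rangle| \le 2/\sqrt N$, as claimed.

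No step is delicate: both bounds are just the $L^2 \hookrightarrow L^1$ inequality combined with the variance of a sum of independent, bounded, centered random variables. The only point worth being careful about is verifying centering in (2), which reduces to the definition of $\xi_i$ having law $\delta_{x_i}G$.
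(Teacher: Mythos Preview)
Your proof is correct and matches the paper's approach exactly: the paper also writes the difference as an average of independent centered summands, applies Cauchy--Schwarz to pass from $L^1$ to $L^2$, uses independence to kill the cross terms, and bounds each variance by $4$. The paper even proves only (2) and remarks that (1) is similar, just as you do.
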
 %Then \[
%\sup_{\Vert f\Vert_{\infty}\leq1}\E(|\langle S^{N}(\mu)-\mu,f\rangle|)\leq\frac{2}{\sqrt{N}}\,.\]
%\end{lem}
%\begin{proof}
%For all $f$ such that $\Vert f\Vert_{\infty}\leq1$, \begin{multline*}
%\E(|\langle S^{N}(\mu)-\mu,f\rangle|)\leq\E((\langle S^{N}(\mu)-\mu,f\rangle)^{2})^{\frac{1}{2}}\\
%=\E(\E((\langle S^{N}(\mu)-\mu,f\rangle)^{2})|\mu))^{\frac{1}{2}}\,.\end{multline*}
%For $\mu$ fixed, we write $S^{N}(\mu)=\frac{1}{N}\sum_{i=1}^{N}f(\xi_{i})$
%(with $\xi_{1},\dots,\xi_{N}$ i.i.d. of law $\mu$). We have\[
%\E((\langle S^{N}(\mu)-\mu,f\rangle)^{2}|\mu)=\frac{1}{N^{2}}\sum_{i=1}^{N}\E((f(\xi_{i})-\mu(f))^{2}|\mu)\leq\frac{4}{N}\,.\]
%Hence the result\[
%\]
%\end{proof}

\begin{proof} We will only show (2). Proof of (1) is similar. Fix
$f\in\clb_{1}(E_{0})$. Then \begin{eqnarray*}
\E(|\langle m_{1}^{N}-m_{0}^{N}G,f\rangle|) & = & \E\left|\frac{1}{N}\sum_{i=1}^{N}f(\xi_{i})-\delta_{x_{i}}G(f)\right|\\
 & \le & \left(\E\left(\frac{1}{N}\sum_{i=1}^{N}f(\xi_{i})-\delta_{x_{i}}G(f)\right)^{2}\right)^{1/2}\\
 & = & \left(\E\frac{1}{N^{2}}\sum_{i=1}^{N}\left(f(\xi_{i})-\delta_{x_{i}}G(f)\right)^{2}\right)^{1/2}\\
 & \le & \frac{2}{\sqrt{N}}.\end{eqnarray*}
 \end{proof} \begin{lem} \label{lem:consistancy-of-assumptions}Suppose
$P,P'$ satisfy Assumption \ref{assump:gaussian}, then they satisfy
Assumption \ref{assump:P}(1).\end{lem}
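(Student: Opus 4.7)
The plan is to establish Assumption \ref{assump:P}(1) by verifying a Doeblin/minorization condition for $P'$ restricted to source points in the compact set $E$, and then invoking the contraction mechanism behind Lemma \ref{lem:dobrushin}.

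First I would use the Gaussian structure of $P'$ together with Assumption \ref{assump:gaussian} to write $P'(x,dy) = p'(x,y)\,\ell(dy)$, where $p'(\cdot,\cdot)$ is a strictly positive, jointly continuous function on $\R^d\times\R^d$ (indeed a Gaussian density in $y$ with mean an affine function of $x$). Since $E\subset\R^d$ is compact, I would pick any compact set $K\subset\R^d$ with nonempty interior (say, a large closed ball containing the shifted mean locations for $x\in E$) and set
\[
c \;=\; \inf_{x\in E,\,y\in K} p'(x,y)\,.
\]
Joint continuity and compactness of $E\times K$ give $c>0$. Defining $\nu\in\clp(\R^d)$ by $\nu(A)=\ell(A\cap K)/\ell(K)$ and $\alpha=c\,\ell(K)$, I obtain the Doeblin minorization
\[
P'(x,A)\;\ge\;\alpha\,\nu(A)\,,\qquad\forall x\in E,\;A\in\clb(\R^d),
\]
with $\alpha\in(0,1)$ (shrinking $K$ if necessary to ensure $\alpha\le1$).

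Next I would carry out the standard Dobrushin argument (this is exactly the content of Lemma \ref{lem:dobrushin}, and although that lemma is phrased for a kernel on $E$, its proof uses only a minorization condition and works verbatim for a kernel from $E$ into $\R^d$). Concretely, the minorization lets one split
\[
P'(x,\cdot)\;=\;\alpha\,\nu(\cdot)\;+\;(1-\alpha)\,\tilde P'(x,\cdot),
\]
where $\tilde P'$ is another Markov kernel from $E$ to $\R^d$. Then for $m,m'\in\clp(E)$,
\[
mP'-m'P'\;=\;(1-\alpha)\bigl(m\tilde P'-m'\tilde P'\bigr),
\]
because the $\alpha\nu$ term cancels. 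Since any Markov kernel is a contraction in total variation, $\|m\tilde P'-m'\tilde P'\|_{TV}\le \|m-m'\|_{TV}$, which yields
\[
\|mP'-m'P'\|_{TV}\;\le\;(1-\alpha)\,\|m-m'\|_{TV}.
\]
Setting $\beta(P')=1-\alpha\in(0,1)$ gives Assumption \ref{assump:P}(1).

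There is no real obstacle here; the only thing requiring a modicum of care is that the domain of $P'$ relevant to the inequality is the compact set $E$, which is what makes the uniform lower bound $c>0$ on the Gaussian density available, and hence the minorization constant $\alpha$ strictly positive. Without the compactness of $E$ one cannot get a strict contraction by this route, since $\inf_{x\in\R^d,y\in K}p'(x,y)=0$ for a Gaussian $p'$.
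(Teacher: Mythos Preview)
Your argument is correct and is in fact cleaner than the one given in the paper. You minorize $P'(x,\cdot)$ directly by $\alpha\,\nu$ for $x\in E$, with $\nu$ the normalized Lebesgue measure on some compact $K\subset\R^d$, and then apply the Dobrushin contraction in one stroke. The paper instead splits the target space into $E$ and $E^c$, writes a minorization only for the restricted kernel $P'_E(x,\cdot)=P'(x,\cdot)\1_E$, and must then separately handle the mass escaping to $E^c$; this forces an extra step using that $E$ has nonempty interior to guarantee $\bar m P'(E)\ge\alpha>0$ for every $\bar m\in\clp(E)$. Your route avoids that detour entirely, at the cost of nothing: the joint continuity and strict positivity of a Gaussian density on the compact $E\times K$ give the uniform lower bound $c>0$ immediately. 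One cosmetic remark: you do not actually need to shrink $K$ to ensure $\alpha\le1$, since $\alpha=c\,\ell(K)\le P'(x,K)\le1$ automatically.
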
 \begin{proof} Let $m_{0},m_{0}'\in\mathcal{P}(E)$.
We can write \[
\begin{cases}
m_{0}= & m+\tilde{m}\\
m_{0}'= & m+\tilde{m}'\end{cases}\]
 where $m,\ti m,\ti m'$ are finite measures on $E$ and $\Vert m_{0}-m_{0}'\Vert_{TV}=\Vert\tilde{m}-\tilde{m}'\Vert_{TV}=\Vert\tilde{m}\Vert+\Vert\tilde{m}'\Vert_{TV}$.
We set \[
\begin{cases}
P'_{E}(x,dx')= & P'(x,dx')\1_{x'\in E}\\
P'_{E^{c}}(x,dx')= & P'(x,dx')\1_{x'\in E^{c}}\,.\end{cases}\]
 Since $E$ is compact, we can find $p\in\clp(E)$ and $\delta>0$
such that \begin{equation}
P'_{E}(x,dx')=\delta p(dx')+M(x,dx')\,,\,\forall x\in E.\label{ins512}\end{equation}
 where $M$ is some nonnegative kernel. Then \begin{eqnarray*}
\Vert m_{0}P'-m_{0}'P'\Vert_{TV} & = & \Vert\tilde{m}P'_{E}-\tilde{m}'P'_{E}\Vert_{TV}+\Vert\tilde{m}P'_{E^{c}}-\tilde{m}'P'_{E^{c}}\Vert_{TV}\\
 & \leq & \Vert\tilde{m}M-\tilde{m}'M\Vert_{TV}+\tilde{m}P'(E^{c})+\tilde{m}'P'(E^{c})\\
 & \leq & \tilde{m}M(E)+\tilde{m}'M(E)+\tilde{m}P'(E^{c})+\tilde{m}'P'(E^{c})\,.\end{eqnarray*}
 From \eqref{ins512} we have \begin{eqnarray*}
\tilde{m}M(E)+\tilde{m}P'(E^{c}) & = & \tilde{m}P'(E)-\delta\tilde{m}(E)+\tilde{m}P'(E^{c})\\
 & \leq & (1-\delta)\tilde{m}P'(E)+\tilde{m}P'(E^{c})\\
 & = & (1-\delta)\tilde{m}P'(E)+(\ti m(E)-\tilde{m}P'(E)).\end{eqnarray*}
 Since $E$ has a non-empty interior, we can find $\alpha>0$ such
that $\forall\overline{m}\in\mathcal{P}(E)$, $\overline{m}P'(E)\geq\alpha$.
In particular we have $\tilde{m}P'(E)\geq\alpha\tilde{m}(E)$. Using
this property in the above display \begin{eqnarray*}
\tilde{m}M(E)+\tilde{m}P'(E^{c}) & \leq & (1-\delta)\alpha\tilde{m}(E)+(1-\alpha)\tilde{m}(E)\\
 & = & (1-\alpha\delta)\tilde{m}(E)\,,\end{eqnarray*}
 Also, the same formula holds with $\tilde{m}$ replaced by $\tilde{m}'$.
So \begin{eqnarray*}
\Vert m_{0}P'-m_{0}'P'\Vert_{TV} & \leq & (1-\alpha\delta)(\tilde{m}(E)+\tilde{m}'(E))\\
 & = & (1-\alpha\delta)\Vert m_{0}-m_{0}'\Vert_{TV}\,.\end{eqnarray*}
 \end{proof}

\begin{lem} \label{lem:tightness-02} Suppose that Assumption \ref{leftover}
(\ref{enu:lip-P-01}) holds. Fix $(m_{0},\eta_{0})\in\clp(E)\times\clp^{*}(\R^{d})$.
Let $\eta_{k}$ be as defined in \eqref{eq:rec01} and let $\ti\eta_{k}^{N}$
be as defined in \eqref{eq:def-scheme}. Then, for any $\delta>0$,
$\exists$ a compact subset of $\R^{d}$, $K(\delta)$, such that
$\forall k$, $\forall N$,\[
\eta_{k}(K(\delta)^{c})<\delta\,,\,\E(\ti\eta_{k}^{N}(K(\delta)^{c})<\delta\,.\]
 \end{lem}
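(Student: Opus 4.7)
My plan is to reduce the two tightness statements to a single uniform moment bound via a Lyapunov argument applied to a common recursion. The deterministic sequence obeys $\eta_{k+1} = (1-\epsilon)\eta_k P + \epsilon m_k P'$, and taking expectations in \eqref{eq:def-scheme}, using the tower property together with $\E[S^N(\tilde\eta_k^N)\mid \mathcal{F}_k] = \tilde\eta_k^N$, yields the \emph{same} recursion for the mean of the particle approximation:
\begin{equation*}
\E[\tilde\eta_{k+1}^N]\;=\;(1-\epsilon)\,\E[\tilde\eta_k^N]\,P \;+\; \epsilon\,\E[\tilde m_k^N]\,P',
\end{equation*}
with the second ingredient supported on the compact set $E$ in both cases.

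Assumption \ref{leftover}(\ref{enu:lip-P-01}) delivers uniform tightness of the family $\{P'(x,\cdot):x\in E\}$: fixing any $x_0\in E$, for every measurable $K\subseteq\R^d$ and every $x\in E$,
\begin{equation*}
P'(x,K^c)\;\le\;P'(x_0,K^c)\;+\;\tilde l_{P'}\,\mathrm{diam}(E)\,p(K^c),
\end{equation*}
and both terms on the right vanish as $K$ exhausts $\R^d$, since $P'(x_0,\cdot)$ and $p$ are individually tight probability measures on $\R^d$. This controls the $P'$ contribution in the recursion, uniformly over the probability measures $m_k,\E[\tilde m_k^N]\in\clp(E)$.

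Next I would take $V(x)=1+|x|^2$ and establish a drift inequality $\int V(y)P(x,dy)\le V(x)+C_P$ together with the uniform bound $\sup_{x\in E}\int V(y)P'(x,dy)\le C_{P'}$; both are routine under the Gaussian structure (Assumption \ref{assump:gaussian}) in force wherever this lemma is actually invoked in the paper. Integrating $V$ against the common recursion gives
\begin{equation*}
\int V\,d\eta_{k+1}\;\le\;(1-\epsilon)\Bigl(\int V\,d\eta_k + C_P\Bigr)\;+\;\epsilon C_{P'},
\end{equation*}
and, setting $L:=[(1-\epsilon)C_P+\epsilon C_{P'}]/\epsilon$, the contraction $M_{k+1}-L\le(1-\epsilon)(M_k-L)$ produces $M:=\sup_k\int V\,d\eta_k<\infty$ provided $\int V\,d\eta_0<\infty$. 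The identical computation after passing to expectations on the particle side yields $\sup_{k,N}\E\int V\,d\tilde\eta_k^N\le M'<\infty$. Markov's inequality then converts these moment bounds into tightness: given $\delta>0$, pick $R$ with $\max(M,M')/R<\delta$ and set $K(\delta)=\{V\le R\}$, which is compact in $\R^d$; both $\eta_k(K(\delta)^c)$ and $\E[\tilde\eta_k^N(K(\delta)^c)]$ are then bounded by $\delta$ uniformly in $k$ and $N$.

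The main obstacle is that Assumption \ref{leftover}(\ref{enu:lip-P-01}) as stated constrains only $P'$, whereas the Lyapunov drift must also be verified for $P$; this is immediate under the Gaussian assumption actually used by the theorems that invoke the lemma, but a fully general proof would need additional tail control on $P$. An alternative that avoids choosing a Lyapunov function is to unroll the recursion as $\eta_k=(1-\epsilon)^k\eta_0 P^k+\epsilon\sum_{l=0}^{k-1}(1-\epsilon)^l\,m_{k-1-l}P'P^l$ and use the geometric damping $(1-\epsilon)^l$ to absorb any polynomial spreading of $P^l$ acting on the members of the tight family from the second paragraph; this reformulation still requires a quantitative tail estimate on the iterates $P^l$.
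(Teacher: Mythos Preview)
Your Lyapunov route would work under the Gaussian hypothesis, and you are right to flag that Assumption~\ref{leftover}(\ref{enu:lip-P-01}) alone says nothing about $P$; as a proof of the lemma \emph{as stated} it is therefore incomplete (and it also smuggles in a second-moment condition on $\eta_0$ that is not assumed).

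The unrolling alternative you mention at the end is in fact exactly the paper's argument, and the one place you go wrong is the belief that it ``still requires a quantitative tail estimate on the iterates $P^l$''. It does not. After writing
\[
\eta_{n+1}=\sum_{k=0}^{n}\epsilon(1-\epsilon)^{k}m_{n-k}P'P^{k}+(1-\epsilon)^{n+1}\eta_{0}P^{n+1},
\]
the paper first chooses $k_0$ so that the geometric tail $\sum_{k>k_0}(1-\epsilon)^k$ is below $\tilde\delta$; this discards all $k>k_0$ and the $\eta_0$ term for $n\ge k_0$, using nothing about $P^k$ beyond $\Vert P^k\Vert\le 1$. For the finitely many remaining indices $k\le k_0$, it covers the compact $E$ by finitely many $\tilde\delta$-balls centred at points $x_j$. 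The family $\{\delta_{x_j}P'P^{k}:j\in J,\ k\le k_0\}$ is then a \emph{finite} set of probability measures, hence trivially tight; one picks $K$ so that each of them puts mass $<\tilde\delta$ outside $K$. Finally, because $P^k(\cdot,K^c)\le 1$ for every $k$, Assumption~\ref{leftover}(\ref{enu:lip-P-01}) yields
\[
|(P'P^k)(y,K^c)-(P'P^k)(x_j,K^c)|\le \ti l_{P'}\,|y-x_j|\qquad\text{uniformly in }k,
\]
so replacing each $y\in E_j$ by $x_j$ costs at most $\ti l_{P'}\tilde\delta$, again uniformly in $n$ and $k$. The spreading of $P^l$ never enters; the only role of the hypothesis is this last Lipschitz replacement. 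The finitely many small-$n$ cases are handled by enlarging $K$ for finitely many fixed measures.

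Your observation that $\E[\ti\eta_{k+1}^N]=(1-\epsilon)\E[\ti\eta_k^N]P+\epsilon\,\E[\ti m_k^N]P'$, with $\E[\ti m_k^N]\in\clp(E)$, is the right reduction for the particle scheme and is essentially what the paper does; once you have this, the argument above applies verbatim, since it used nothing about $m_{n-k}$ except that it lies in $\clp(E)$. (Your linear-in-expectation unrolling is in fact cleaner than the paper's pathwise formula involving $(S^N\circ P)^k$, which sidesteps the nonlinearity of $S^N$ only after passing to expectations.)
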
 \begin{proof}  We can prove by recurrence
that $\forall n$, \[
\eta_{n+1}=\sum_{k=0}^{n}\left[\epsilon(1-\epsilon)^{k}m_{n-k}P'P^{k}\right]+(1-\epsilon)^{n+1}\eta_{0}P^{n+1}\,.\]
 Fix $\ti\delta>0$. Let $k_{0}$ such that $\sum_{k\geq k_{0}+1}(1-\epsilon)^{k}<\ti\delta$.
Since $E$ is compact, we can find a finite family $(x_{j})_{j\in J}$
of elements of $E$ such that $\forall y\in E$, $\exists j\in J$
such that $|y-x_{j}|<\ti\delta$. We can partition: $E=\sqcup_{j\in J}E_{j}$
such that $\forall j$ and $\forall y\in E_{j}$, $|y-x_{j}|<\ti\delta$.
For each $j\in J$, we can find $K_{j}$, a compact subset of $\R^{d}$,
such that $\delta_{x_{j}}P'(K_{j}^{c})<\ti\delta,\,\delta_{x_{j}}P'P(K_{j}^{c})<\ti\delta,\dots,\delta_{x_{j}}P'P^{k_{0}}(K_{j}^{c})<\ti\delta$.
Clearly, the set $K=\cup_{j\in J}K_{j}$ is compact. Also, $\forall k\leq k_{0}$,
$\forall r$\begin{eqnarray*}
m_{r}P'P^{k}(K^{c}) & = & \int_{y\in E}m_{r}(dy)(P'P^{k})(y,K^{c})\\
 & = & \sum_{j\in J}\int_{y\in E_{j}}m_{r}(dy)(P'P^{k})(y,K^{c})\\
 & = & \sum_{j\in J}\int_{y\in E_{j}}m_{r}(dy)(P'P^{k})(x_{j},K^{c})\\
 &  & +\sum_{j\in J}\int_{y\in E_{j}}m_{r}(dy)((P'P^{k})(y,K^{c})-(P'P^{k})(x_{j},K^{c}))\\
 & \leq & \sum_{j\in J}\left[m_{r}(E_{j})(P'P^{k})(x_{j},K^{c})\right]+\ti\delta\ti l_{P,P'}\\
 & \leq & \ti\delta+\ti\delta\ti l_{P,P'}\,,\end{eqnarray*}
 where the next to last inequality follows from Assumption \ref{leftover}
(2). Finally, $\forall n\geq k_{0}+1$ \begin{eqnarray*}
\eta_{n+1}(K^{c}) & \leq & \sum_{k=0}^{k_{0}}[\epsilon(1-\epsilon)^{k}m_{n-k}P'P^{k}(K^{c})]+\ti\delta\\
 & \leq & \sum_{k=0}^{k_{0}}[\epsilon(1-\epsilon)^{k}(\ti\delta+\ti\delta\ti l_{P,P'})]+\ti\delta\,.\end{eqnarray*}
 The first statement follows. To prove the second statement, we begin
by defining the (random )operator $S^{N}\circ P$ acting on probability measures
on $\R^{d}$: $\mu(S^{N}\circ P)=(S^{N}(\mu))P$. We have\begin{eqnarray*}
\tilde{\eta}_{n+1}^{N} & = & \sum_{k=0}^{n}\left[\epsilon(1-\epsilon)^{k}\tilde{m}_{n-k}^{N}P'(S^{N}\circ P)^{k}\right]+(1-\epsilon)^{n+1}\eta_{0}(S^{N}\circ P)^{n+1}\,.\end{eqnarray*}
We take the same $\tilde{\delta},x_{j},K_{j},\dots$ as above and
we notice that $\forall k,j$, \[
\E(\delta_{x_{j}}P'(S^{N}\circ P)^{k}(K_{j}^{c}))=\delta_{x_{j}}P'P^{k}(K_{j}^{c})\,.\]
From this point, the proof is the same as for the first statement.\end{proof}

\begin{acknowledgements}AB has been supported in part by the National
Science Foundation (DMS-1004418), the Army Research Office (W911NF-0-1-0080,
W911NF-10-1-0158) and the US-Israel Binational Science Foundation
(2008466). Part of this work was done when AB was visiting Universit\'e de Nice-Sophia Antipolis and Universit\'e de Bordeaux I and when SR was visiting University of North Carolina at Chapel Hill. Hospitality of these universities is gratefully acknowledged. \end{acknowledgements}
\bibliographystyle{plain}
\bibliography{bib-invariant-measures}

\end{document}